\newtheorem{theorem}{Theorem}
\newtheorem{lemma}[theorem]{Lemma}
\newtheorem{proposition}[theorem]{Proposition}
\newtheorem{corollary}[theorem]{Corollary}
\theoremstyle{definition}
\newtheorem{definition}[theorem]{Definition}
\newtheorem{remark}[theorem]{Remark}
\newtheorem{assumptions}[theorem]{Assumptions}
\newtheorem{rhproblem}[theorem]{RH problem}
\numberwithin{equation}{section}
\numberwithin{theorem}{section}
\DeclareMathOperator{\Bal}{Bal}
\DeclareMathOperator{\divis}{div}
\DeclareMathOperator{\diag}{diag}
\DeclareMathOperator{\Prob}{Prob}
\DeclareMathOperator{\supp}{supp}
\DeclareMathOperator{\tr}{tr}
\DeclareMathOperator{\adj}{adj}
\DeclareMathOperator{\Disc}{Disc}
\newcommand{\ds}{\displaystyle}
\let\Re\undefined
\let\Im\undefined
\DeclareMathOperator{\Re}{Re}
\DeclareMathOperator{\Im}{Im}
\title{Matrix valued orthogonal polynomials arising from  hexagon tilings with $3\times 3$-periodic weightings }
\author{Arno B.J. Kuijlaars 
{\small  \authorcr \ \authorcr
Department of Mathematics,  Katholieke Universiteit Leuven, \authorcr 
	Celestijnenlaan 200 B bus 2400, 3001 Leuven, Belgium
\authorcr arno.kuijlaars@kuleuven.be}}
\date{}                     
\begin{document}
	\maketitle

\begin{abstract}
	Matrix valued orthogonal polynomials (MVOP) appear in
	the study of doubly periodic tiling models. Of particular
	interest is their limiting behavior as the degree tends
	to infinity.	
	In recent years, MVOP associated with doubly periodic 
	domino tilings of the Aztec diamond have  
	been successfully analyzed. 
	The MVOP related to doubly periodic lozenge tilings of a hexagon are more complicated. In this paper we focus
	on a special subclass of hexagon tilings with $3 \times 3$ periodicity. 
	The special subclass leads to a genus one spectral curve  with additional symmetries that allow us to 
	find an equilibrium measure in an external field explicitly. 
	The equilibrium measure gives the asymptotic distribution for the zeros of the determinant of the MVOP. 
	The associated $g$-functions appear in the strong
	asymptotic formula for the MVOP that we obtain
	from a steepest descent analysis of the Riemann-Hilbert problem for MVOP. 
\end{abstract}

\section{Introduction and statement of results}

We study the matrix valued orthogonal polynomials (MVOP) that are
associated with  random tilings 
of a regular hexagon with doubly periodic weightings of period three. We restrict to a special class with additional
symmetries that allow us to perform a
full  asymptotic analysis of the MVOP.

\subsection{Matrix valued orthogonal polynomials}

Matrix valued orthogonal polynomials play a role in the analysis
of tiling models with doubly periodic weights. This connection was found in \cite{DK21} and used in subsequent works \cite{BD19,Cha21a,Cha21b,CDKL20,GK21,KP24+}.

Here we study the MVOP that come from a lozenge tiling model
of a regular hexagon with $3 \times 3$-periodic weights. 
The model depends on positive parameters $a_{jk}$, $b_{j k}$,
for $j,k = 1,2,3$ that are collected in three matrix valued functions
\begin{equation} \label{Tj3} 
	T_j(z) = \begin{pmatrix} a_{j1} & b_{j1} & 0 \\ 0 & a_{j2} & b_{j2} 
		\\ b_{j3} z & 0 & a_{j3} \end{pmatrix},
	\qquad j=1, 2, 3. \end{equation}
We put
\begin{equation} \label{Wz}
	W(z) = T_1(z) T_2(z) T_3(z).
	\end{equation}
See Appendix \ref{appendixA} for the connection with doubly periodic lozenge tilings. 

We take the size of the hexagon to be $3N \times 3BN \times 3CN$, 
with positive integers $N$, $BN$ and $CN$, so that it
fits nicely with the periodicity. Then the MVOP associated with this model  is a monic polynomial $P_N$, matrix valued of size $3 \times 3$ and having degree $N$, such that
\begin{equation} \label{PNortho}
		\frac{1}{2\pi i} \oint_{\gamma} P_N(z) \frac{W(z)^{(B+C)N}}{z^{(1+C)N}} z^j dz = 0_3,
			\qquad j=0,1, \ldots, N-1.
			\end{equation}
The contour $\gamma$ in \eqref{PNortho}
is any simple closed contour going around $0$ once in the positive direction, and the integral in \eqref{PNortho} is taken entrywise.
Thus, the right-hand side is the zero matrix of size $3 \times 3$.

The orthogonality \eqref{PNortho} is not related to a positive definite scalar product. Hence, existence and uniqueness of the MVOP does not follow from general theory of MVOP, see e.g.\ \cite{DPS08}. However, because of the connection to the random tiling model, the MVOP with the specific orthogonality \eqref{PNortho} does uniquely exist \cite{DK21,GK21}. 

Of interest is the asymptotic behavior of $P_N$ as $N \to \infty$.
Ideally, we would like to understand this for general parameters, for general dimensions of the hexagon, and also
for higher periodicities. In this paper we restrict to
a special subclass, namely we assume $B=C=1$ (i.e., the regular hexagon), and 
\begin{align} \label{Eq1ab} 
	a_{1k} a_{2k} a_{3k} & = 1, \qquad k=1,2,3, \\
	\label{Eq2ab} 
	b_{11} b_{22} b_{33}  =  b_{12} b_{23} b_{31} = b_{13} b_{21} b_{32}  & = 1, \\ 
	\label{Eq3ab} 
	\frac{a_{j1}a_{j2} a_{j3}}{b_{j1} b_{j2} b_{j3}} & = 1,
	\qquad j =1,2,3, \end{align}
	and
\begin{equation} \label{Ineq3ab}
	a_{11} b_{21} \neq a_{22} b_{11} \quad \text{ or } \quad 
	a_{12} b_{22} \neq a_{23} b_{12}. \end{equation}	
For a concrete model satisfying the conditions \eqref{Eq1ab}--\eqref{Ineq3ab} one may take
\begin{equation} \label{Tjconcrete} 
	T_1(z) = \begin{pmatrix} \alpha_1 & 1 & 0 \\ 0 & \alpha_2^{-1} & 1 \\ z & 0 & \alpha_1^{-1} \alpha_2 \\
	\end{pmatrix}, \
	T_2(z) = \begin{pmatrix} \alpha_1^{-1} & 1 & 0 \\ 0 & \alpha_2 & 1 \\ z & 0 & \alpha_1 \alpha_2^{-1} \\
	\end{pmatrix}, \
	T_3(z) = \begin{pmatrix} 1 & 1 & 0 \\ 0 & 1 & 1 \\ z & 0 & 1 \\
	\end{pmatrix}, \end{equation}
with positive parameters $\alpha_1, \alpha_2 >0$,
$(\alpha_1, \alpha_2) \neq (1,1)$. 

The conditions \eqref{Eq1ab}--\eqref{Ineq3ab} come from
the structure of the associated Riemann surface (spectral
curve) that we discuss in the next subsection.

\begin{remark}
The assumptions \eqref{Eq1ab}, \eqref{Eq2ab},
	\eqref{Eq3ab} reduce the number of
	parameters $a_{jk}$, $b_{jk}$ in the model
	from $18$ to $10$, as only eight of the nine equations \eqref{Eq1ab}--\eqref{Eq3ab} are linearly independent.	 There
	is a further reduction, since different
	sets of parameters in the matrices
	\eqref{Tj3} may give rise
	to essentially the same weight matrices \eqref{Wz}.
This happens if $D_1, D_2, D_3$ are diagonal matrices with positive diagonal entries, and $c_1, c_2, c_3$
are positive constants. Then putting
$D_4 = D_1$, and
\[ \widehat{T}_k = c_k D_k T_k D_{k+1}^{-1},
	\quad \text{for } k =1,2,3, \]
we obtain  matrices of the form \eqref{Tj3} such
that 
\[ \widehat{W}(z) = \widehat{T}_1(z)
	\widehat{T}_2(z) \widehat{T}_3(z)
		= c_1 c_2 c_3 D_1 W(z) D_1^{-1}. \]
The monic MVOP $\widehat{P}_N$ associated with $\widehat{W}$  
is then directly related to $P_N$ since  
\[ \widehat{P}_N(z) = D_1 P_N(z) D_1^{-1} \]
and the two models can be considered equivalent. 
It turns out that, under this equivalence, the number of parameters
further reduces to $2$, and each model is equivalent
to \eqref{Tjconcrete} for some choice
of $\alpha_1, \alpha_2$. We will not prove this here, as we will not use it in what follows.
\end{remark}

\subsection{The spectral curve}

The characteristic equation of \eqref{Wz}, i.e.,
\begin{equation} \label{Wcharpoly} 
		P(z,\lambda) :=	\det\left(\lambda I_3 - W(z)\right) = 0, \end{equation}
plays an important role in this work. It is referred to as the spectral curve.
The compact Riemann surface associated with \eqref{Wcharpoly}
is denoted by $\mathcal R$. Generic points on $\mathcal R$
are denoted by $p$ and $q$. Each $p \in \mathcal R$
has a $z$ and a $\lambda$ coordinate, that we denote by
$z(p)$ and $\lambda(p)$.

The matrix valued orthogonality is known to be related to scalar orthogonality on the spectral curve \eqref{Wcharpoly}, see e.g., \cite{Ber23, BGK23, Cha21b, GK21}. We will exploit this relation extensively, as we will use several notions coming from the
spectral curve, in particular the equilibrium measure, see section \ref{section15} below.  However, we do not deal with the
scalar orthogonality directly.
See \cite{Ber21,Ber22,DIP24+, DLR23+,FOX23} for  other recent works on orthogonality on a Riemann surface covering varying aspects
of asymptotic analysis, including steepest descent of Riemann-Hilbert problems
on a spectral curve \cite{Ber22, DIP24+}.

The spectral curve \eqref{Wcharpoly} for the weight matrix \eqref{Wz} is known to be a \textit{Harnack curve}, see  \cite{KO06,KOS06},
which means that the \textit{amoeba map} 
$A : p  \mapsto (\log |z(p)|, \log |\lambda(p)|)$ is at most $2$-to-$1$ when restricted to the curve. 
Since $P$ has real coefficients, the Harnack property means that 
$A(z,\lambda) = A(z',\lambda')$
with $P(z,\lambda) = P(z',\lambda') = 0$ implies that either $(z',\lambda') = (z,\lambda)$ or 
$(z',\lambda') = (\overline{z},\overline{\lambda})$.

We will not work with the amoeba in this paper, but rather view $\mathcal R$
as a three-fold covering of the complex $z$-plane. 
There is a canonical way to do so, because of the
following very special property of the eigenvalues of 
$W(z)$. If we order the three eigenvalues according
to decreasing absolute value, then for $z \in \mathbb C \setminus \mathbb R$ there is strict inequality, i.e.,
\begin{equation} \label{lambdajordered} 
	|\lambda_1(z)| > |\lambda_2(z)| > |\lambda_3(z)| > 0,
	\quad z \in \mathbb C \setminus \mathbb R. \end{equation}
This is a consequence of being a Harnack curve \cite{KOS06}.
In particular, $W(z)$ has simple spectrum if $z \in \mathbb C \setminus \mathbb R$, and we will write
\begin{equation} \label{Wdecomp}
	W(z) = E(z) \Lambda(z) E(z)^{-1}, 
	\quad \Lambda(z)  = \diag(\lambda_1(z), \lambda_2(z), \lambda_3(z)),
\end{equation} 
for $z \in \mathbb C \setminus \mathbb R$,
where $E(z)$ is a matrix containing the eigenvectors of $W(z)$.
Later we will make a specific choice for the 
matrix of eigenvectors, see Definition \ref{def:E}.

\begin{figure}[t]
	\begin{center}
		\begin{tikzpicture}[scale=0.8](15,10)(0,0)
			\filldraw[gray!50!white] (6,3) --++(10,0) --++(2,2) --++(-10,0) --++(-2,-2);
			\filldraw[gray!50!white] (6,0) --++(10,0) --++(2,2) --++(-10,0) --++(-2,-2);
			\filldraw[gray!50!white] (6,-3) --++(10,0) --++(2,2) --++(-10,0) --++(-2,-2);
			
		\filldraw (7.5,4)  circle (2pt);	 
		\filldraw (10,4)  circle (2pt);	 
		\filldraw (11,4)  circle (2pt);	 
		\filldraw (17,4)  circle (2pt);	 
		\draw (7.5,4) node[below] {$z_1$};
		\draw (10,4) node[below] {$z_2$};
		\draw   (11,4) node[below] {$0$};
		\draw   (17,4) node[below] {$\infty$};
		\draw[very thick,black] (7.5,4)--++(2.5,0);
		\draw[dashed,ultra thick,purple] (7,4)--++(0.5,0);
		\draw[dashed,ultra thick,purple] (10,4)--++(7,0);
	
	\filldraw (7.5,1)  circle (2pt);	 
	\filldraw (10,1)  circle (2pt);	 
	\filldraw (11,1)  circle (2pt);	 
	\filldraw (11.5,1)  circle (2pt);	 
	\filldraw (12.5,1)  circle (2pt);	 
	\filldraw (13.5,1)  circle (2pt);	 	 
	\filldraw (16.5,1) circle (2pt);
	\filldraw (17,1)  circle (2pt);	 
	\draw (7.5,1) node[below] {$z_1$};
	\draw (10,1) node[below] {$z_2$};
	\draw   (11,1) node[below] {$0$};
	\draw   (17,1) node[below] {$\infty$};
	\draw (11.5,1) node[below] {$z_3$};
	\draw (12.5,1) node[below] {$z_4$};
	\draw (13.5,1) node[below] {$z_5$};
	\draw (16.5,1) node[below] {$z_6$};
	\draw[very thick,black] (7.5,1)--++(2.5,0);
	\draw[very thick,black] (11.5,1)--++(1,0);
	\draw[very thick,black] (13.5,1)--++(3,0);
	\draw[dashed,ultra thick,purple] (7,1)--++(0.5,0);
	\draw[dashed,ultra thick,purple] (10,1)--++(1.5,0);
	\draw[dashed,ultra thick,red] (12.5,1)--++(1,0);
	\draw[dashed,ultra thick,purple] (16.5,1)--++(0.5,0);
		
	\filldraw (9.2,-2) circle (2pt);
	\filldraw (9.5,-2) circle (2pt);
	\filldraw (9.8,-2) circle (2pt);
	\filldraw (11,-2)  circle (2pt);	 
	\filldraw (11.5,-2)  circle (2pt);	 
	\filldraw (12.5,-2)  circle (2pt);	 
	\filldraw (13.5,-2)  circle (2pt);	 
	\filldraw (16.5,-2) circle (2pt);
	\filldraw (17,-2)  circle (2pt);	 
	\draw   (11,-2) node[below] {$0$};
	\draw  (9.5,-2) node[below] {$\lambda = 0$};
	\draw   (17,-2) node[below] {$\infty$};
	\draw (11.5,-2) node[below] {$z_3$};
	\draw (12.5,-2) node[below] {$z_4$};
	\draw (13.5,-2) node[below] {$z_5$};
	\draw (16.5,-2) node[below] {$z_6$};
	\draw[very thick,black] (11.5,-2)--++(1,0);
	\draw[very thick,black] (13.5,-2)--++(3,0);
	\draw[dashed,ultra thick,purple] (7,-2)--++(4,0);
	\draw[dashed,ultra thick,purple] (10,-2)--++(1.5,0);
	\draw[dashed,ultra thick,red] (12.5,-2)--++(1,0);
	\draw[dashed,ultra thick,purple] (16.5,-2)--++(0.5,0);	
		
	\draw[dashed,help lines,purple] (7.5,1)--(7.5,4);
	\draw[dashed,help lines,purple] (10,1)--(10,4);	
	
			
	\draw[dashed,help lines,purple] (11.5,-2)--(11.5,1);
	\draw[dashed,help lines,red] (12.5,-2)--(12.5,1);
	\draw[dashed,help lines,red] (13.5,-2)--(13.5,1);	
	\draw[dashed,help lines,purple] (16.5,-2)--(16.5,1);			
		\end{tikzpicture}
	\end{center}
\caption{Sheet structure of the Riemann surface for generic parameters. \label{fig:sheets}}
\end{figure}
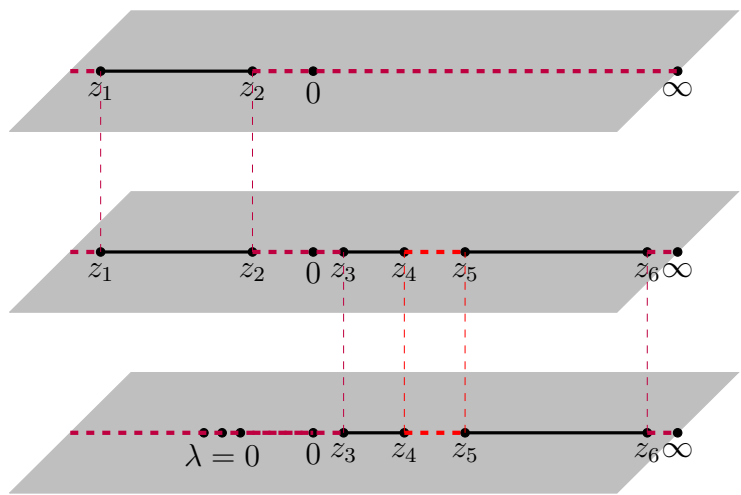

The ordering \eqref{lambdajordered} provides the Riemann surface
associated with \eqref{Wcharpoly} with 
a natural sheet structure, namely we put $\lambda = \lambda_k$ on the $k$th sheet for $k=1,2,3$.
Figure \ref{fig:sheets} shows the sheet 
structure of the Riemann surface for generic parameters.
There are six branch points on the real line,
say with $z$-coordinates $z_1, \ldots, z_6$, such that 
\begin{equation} \label{zjordering}
	-\infty \leq z_1 < z_2 \leq 0 \leq z_3 < z_4 \leq z_5 < z_6 \leq +\infty. \end{equation}
Then  $W(x)$ has three real eigenvalues if and only
if $x \in (-\infty,z_1] \cup [z_2, z_3] \cup [z_4,z_5]
\cup [z_6,\infty)$. 
For $x \in (z_1,z_2) \cup (z_3,z_4) \cup (z_4,z_5)$
there is one real eigenvalue, and a pair of non-real
complex conjugate eigenvalues, namely $\lambda_3(x)$
is the only real eigenvalue for $x \in (z_1,z_2)$
and $\lambda_1(x)$ is the only real eigenvalue for $x \in (z_3,z_4) \cup (z_5,z_6)$. The branch cut 
 $[z_1,z_2]$ on the negative real axis 
 connecting the first and second
 sheets and two branch cuts $[z_3,z_4]$ and
 $[z_5,z_6]$ on the positive real axis connect the second and third sheets. This is all a consequence of being a 
 Harnack curve.
 
  The real part of \eqref{Wcharpoly} consists
   of two ovals. The unbounded oval stretches out over all three sheets,
  as shown in purple (dashed) in Figure \ref{fig:sheets}. 
  The positive real axis
  on the first sheet, and the negative real axis on the third sheet
  are fully on the unbounded oval.
  It also  contains all points
 where $z$ or $\lambda$  are $0$ or $\infty$.  
 At these points we observe the following
 \begin{itemize}
 	\item  For $z=0$, the matrix $W(z)$ is upper triangular, and
 the eigenvalues are its diagonal entries
 \begin{equation} \label{z0values} 
 	a_{1k} a_{2k} a_{3k}, \quad k=1,2,3. 
 	\end{equation}
 \item   $\lambda=0$ is an eigenvalue of $W(z)$ if and only if
 one of the factors \eqref{Tj3} is singular.
 Since $\det T_j(z) = a_{j1} a_{j2} a_{j3} + b_{j1} b_{j2} b_{j3} z$, this  happens
 for the three negative  values
 \begin{equation} \label{lambda0values}
 	- \frac{a_{j1} a_{j2} a_{j3}}{b_{j1} b_{j2} b_{j3}}, \quad j = 1,2,3. \end{equation}
  The corresponding points are on the third sheet of
 the Riemann surface, see also Figure \ref{fig:sheets}.
 \item As $z \to \infty$, also $\lambda_k(z) \to 0$ 
 and the limits  $\lim\limits_{z \to \infty} \lambda_k(z)/z$ exists.
 This can be seen from the fact that \eqref{Wz}
 has the form 
 \[ W(z) = z L + U \]
 with a lower triangular matrix $L$ and an  upper triangular
 matrix $U$. The diagonal entries of $L$ give us the
 possible limits of $\lambda_k(z)/z$ as $z \to \infty$, 
 and these numbers are
 \begin{equation} \label{zinftyvalues} 
 	b_{11} b_{22} b_{33}, 	\quad b_{12} b_{23} b_{31}, \quad b_{13} b_{21} b_{32}.
  \end{equation}
 \end{itemize}

The bounded oval is a closed contour that consists
of the interval $[z_4,z_5]$ on the second and third sheets
of the Riemann surface, it is shown in red (also dashed)
in Figure \ref{fig:sheets}. If $z_4 = z_5$ then 
the bounded oval degenerates to a node. In that
case the Riemann surface has genus zero, otherwise the
genus is one.  

The assumptions for the present paper are the following.
\begin{assumptions} \label{assump12} We assume
\begin{itemize}
	\item[(a)] $0$ is a branch point of the Riemann surface that connects all three sheets and
	$\lambda_k(z) \to 1$ as $z \to 0$ for every $k=1,2,3$;
	
	\item[(b)] $\infty$ is a branch point of the Riemann surface that connects
	all three sheets and $\frac{\lambda_k(z)}{z} \to 1$ as $z \to \infty$ for every $k=1,2,3$;
	
	\item[(c)]  $\lambda = 0$ is taken at $z=-1$ only, i.e., $\lambda_3$
	has a triple zero at $z=-1$.
	
	\item[(d)] The
	bounded oval does not degenerate to a node (i.e., the
	Riemann surface has genus one).
\end{itemize}
\end{assumptions}
The assumptions (a) and (b) imply that $z_1 = -\infty$,
$z_2 = 0 = z_3$ and $z_6 = +\infty$ in \eqref{zjordering},
and we have the sheet structure as in Figure \ref{fig:sheets2}.
Due to (a)-(c) there is only one point on the Riemann surface with $z=0$, one point with $z=\infty$ and one point with $\lambda=0$. 
We call these three points
$P_0$, $P_{\infty}$, and $P_1$, respectively,
as also indicated in Figure \ref{fig:sheets2}.

Furthermore, assumption (a) tells us that the numbers \eqref{z0values} 
are equal to $1$, while (c) gives that the
numbers \eqref{lambda0values} are equal to $-1$,
and (b) gives that the numbers \eqref{zinftyvalues} 
are equal to $1$. That is, the equalities listed in \eqref{Eq1ab}, \eqref{Eq2ab}, and \eqref{Eq3ab} hold,
and they are equivalent to Assumptions (a)-(c).

	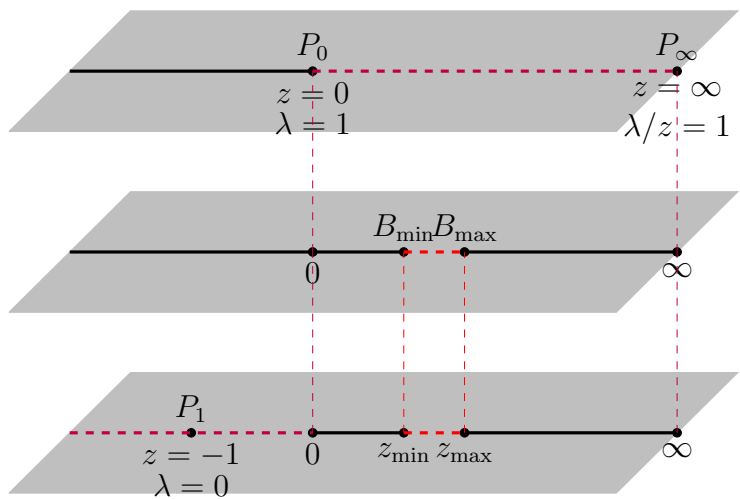
\begin{figure}[t]
		\begin{center}
			\begin{tikzpicture}[scale=0.8](15,10)(0,0)
				
				\filldraw[gray!50!white] (6,3) --++(10,0) --++(2,2) --++(-10,0) --++(-2,-2);
				\filldraw[gray!50!white] (6,0) --++(10,0) --++(2,2) --++(-10,0) --++(-2,-2);
				\filldraw[gray!50!white] (6,-3) --++(10,0) --++(2,2) --++(-10,0) --++(-2,-2);
				
				\filldraw (11,4)  circle (2pt);	 
				\filldraw (17,4)  circle (2pt);	
				\draw   (11,4) node[above] {$P_0$};
				\draw  (11,4) node[below] {$z = 0$};
				\draw  (11,3.5) node[below] {$\lambda=1$};
				\draw   (17,4) node[above] {$P_\infty$};
				\draw   (17,4) node[below] {$z=\infty$}; 
				\draw   (17,3.5) node[below] {$\lambda/z =1$};
				\draw[very thick,black] (7,4)--++(4,0);
				\draw[dashed,very thick,purple] (11,4)--++(6,0);
				
				\filldraw (11,1)  circle (2pt);	 
				\filldraw (12.5,1)  circle (2pt);	 
				\filldraw (13.5,1)  circle (2pt);	 	 
				\filldraw (17,1)  circle (2pt);	 
				\draw   (11,1) node[below] {$0$};
				\draw   (17,1) node[below] {$\infty$};
				\draw   (12.5,1) node[above] {$B_{\min}$};
				\draw   (13.5,1) node[above] {$B_{\max}$};
				\draw[very thick,black] (7,1)--++(5.5,0);
				\draw[very thick,black] (13.5,1)--++(3.5,0);
				\draw[dashed, very thick,red] (12.5,1)--++(1,0);
				
				\filldraw (9,-2) circle (2pt);
				\filldraw (11,-2)  circle (2pt); 
				\filldraw (12.5,-2)  circle (2pt);	 
				\filldraw (13.5,-2)  circle (2pt);	 
				\filldraw (17,-2)  circle (2pt);	 
				\draw   (11,-2) node[below] {$0$};
				\draw   (9,-2) node[above] {$P_1$};
				\draw  (9,-2.5) node[below] {$\lambda = 0$};
				\draw  (9,-2) node[below] {$z=-1$};
				\draw   (17,-2) node[below] {$\infty$};
				\draw   (12.5,-2) node[below] {$z_{\min}$};
				\draw   (13.5,-2) node[below] {$z_{\max}$};
				\draw[very thick,black] (11,-2)--++(1.5,0);
				\draw[very thick,black] (13.5,-2)--++(3.5,0);
				\draw[dashed,very thick,purple] (7,-2)--++(4,0);
				\draw[dashed,very thick,red] (12.5,-2)--++(1,0);
				
				di			
				\draw[dashed,help lines,purple] (11,1)--(11,4);	
				\draw[dashed,help lines,purple] (17,1)--(17,4);	
				
				\draw[dashed,help lines,purple] (11,-2)--(11,1);
				\draw[dashed,help lines,red] (12.5,-2)--(12.5,1);
				\draw[dashed,help lines,red] (13.5,-2)--(13.5,1);	
				\draw[dashed,help lines,purple] (17,-2)--(17,1);			
			\end{tikzpicture}
		\end{center}
		\caption{Sheet structure for the Riemann surface
			under Assumptions \ref{assump12}. 
			The Riemann
			surface has the equation \eqref{Plamz} with $\beta > 0$. \label{fig:sheets2}}
	\end{figure}
	
In addition to the branch points at $P_0$ and $P_{\infty}$
that connect all three sheets, the Riemann surface
has branch points at $B_{\min}$ and $B_{\max}$ that connect the second and third
sheets. We write
\begin{equation} \label{BminBplus} 
	B_{\min} = (z_{\min}, \lambda_{\min}), \quad B_{\max} = (z_{\max}, \lambda_{\max}), \end{equation}
with $0 < z_{\min} < z_{\max}$. The coordinates can be found explicitly, see \eqref{zminzmax} below, 
and it turns out that $z_{\min} \leq 1 \leq z_{\max}$. Note that $z_{\min} = z_4$ and $z_{\max} = z_5$ in our earlier notation
from \eqref{zjordering}.
The assumption (d) means that 
\begin{equation} \label{zminzmaxorder} z_{\min} < 1 < z_{\max} 
	\end{equation}
and this is equivalent to \eqref{Ineq3ab}.

\subsection{Asymptotic formula for MVOP}

Under the Assumptions \ref{assump12}, or equivalently
\eqref{Eq1ab}--\eqref{Ineq3ab}, we have a
strong asymptotic formula for the MVOP \eqref{PNortho}
in case $B=C=1$.
We use $\mathbb T = \{ z \in \mathbb C \mid |z| = 1 \}$ to denote the unit circle in the complex plane.

\begin{theorem} \label{theorem13}
	Suppose $B=C=1$ and assume that \eqref{Eq1ab}--\eqref{Ineq3ab}
	hold. Then there exist
	matrix valued functions $G$  and $A_N$, a constant matrix
	$L$ (all of size $3 \times 3$) and a constant $c > 0$ 	such that 
	\begin{equation} \label{PNasymp} 
		P_N(z) = L^N \left(A_N(z) + \mathcal{O}\left( \frac{e^{-cN}}{1+|z|}\right) \right) 	G(z)^N 
		\quad \text{ as } N \to \infty,
	\end{equation}
	uniformly for $z$ in compact subsets of $\overline{\mathbb C} \setminus \mathbb T$. Moreover,
	\begin{enumerate}
		\item[\rm (a)] $G$ is defined and analytic on $\mathbb C \setminus (\mathbb T \cup [z_{\min}, z_{\max}])$, and it takes the form
		\begin{equation} \label{Gform} 
		G(z) = E(z) \begin{pmatrix} e^{g_1(z)} & 0 & 0 \\
			0 & e^{g_2(z)} & 0 \\
			0 & 0 & e^{g_3(z)} \end{pmatrix}  E(z)^{-1} 
		\end{equation}
		where $E(z)$ is the matrix of eigenvectors of $W(z)$,
		and
		with three $g$-functions that behave like 
		\begin{equation} \label{gjasymp} 
			g_j(z)  =  \log z + \mathcal{O}(z^{-1/3}) \quad \text{ as } z \to \infty, \end{equation}
		\item[\rm (b)]
		$L$ is a unit lower triangular matrix (i.e., lower triangular with ones on the diagonal) with
		the property that for each $N$,
		\begin{equation} \label{GNasymp} L^N G(z)^N = z^N I_3 + \mathcal{O}(z^{N-1}) 
		\quad \text{ as } z \to \infty, \end{equation}
		\item[\rm (c)] $A_N$ only depends on the parity of $N$, 
		i.e., there exist $A_e$ and $A_o$ such that
		\begin{equation} \label{ANparity} 
			A_N(z) = \begin{cases} A_e(z), & \text{ if $N$ is even}, \\
				A_o(z), & \text{ if $N$ is odd},
		\end{cases} \end{equation}	
		where $A_e$ and $A_o$ are defined and analytic in 
		$\mathbb C \setminus (\mathbb T \cup [z_{\min}, z_{\max}])$,
		with
		\begin{equation} \label{ANasymp} 
			\lim_{z \to \infty} A_e(z) = \lim_{z \to \infty} A_o(z) = I_3. \end{equation}
		\item[\rm (d)] $G$ and $A_N$ have boundary values on
		$[z_{\min}, 1) \cup (1, z_{\max}]$ that satisfy
		\begin{align} \label{Gjump1}
			G_+  & = G_- E \begin{pmatrix} 1 & 0 & 0 \\ 0 & -1 & 0 \\ 0 & 0 & -1 \end{pmatrix} E^{-1}, \\
			(A_N)_+  & = (A_N)_- E \begin{pmatrix} 1 & 0 & 0 \\ 0 & (-1)^N & 0 \\ 0 & 0 & (-1)^N \end{pmatrix} E^{-1}.			\label{ANjump1}
		\end{align}
	\end{enumerate}	
\end{theorem}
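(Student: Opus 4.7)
The plan is to derive \eqref{PNasymp}--\eqref{ANjump1} by the Deift--Zhou nonlinear steepest descent method applied to the Riemann--Hilbert (RH) problem that characterizes $P_N$. With $B=C=1$, the orthogonality \eqref{PNortho} corresponds to an RH problem on $\mathbb T$ whose jump is built from $W(z)^{2N}/z^{2N}$ and whose normalization at $\infty$ identifies the MVOP. I would carry out the analysis in four standard stages: (i) a normalizing transformation using the $g$-functions; (ii) opening of lenses; (iii) construction of a global parametrix on the genus-one spectral curve $\mathcal R$; (iv) local parametrices followed by a small-norm argument.

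In stage (i) I pass from $Y_N$ to $T_N:=Y_N G(z)^{-N}$, with $G$ as in \eqref{Gform}. Because $G$ is conjugate via $E(z)$ to $\diag(e^{g_j})$, the jump of $T_N$ can be read off in the eigenbasis of $W$, and the problem effectively splits into three scalar problems governed by the $g$-functions. The variational (Euler--Lagrange) conditions satisfied by the equilibrium measure on $\mathcal R$ then guarantee that the resulting diagonal entries are unimodular on the support of the measure and exponentially close to $0$ elsewhere. Stage (ii) factors the oscillatory parts of the jump into upper and lower triangular factors and opens lenses off $\mathbb T$; the jumps on the lens boundaries decay at rate $e^{-cN}$, leaving on $\mathbb T\cup[z_{\min},z_{\max}]$ the parity-dependent constant jump in \eqref{Gjump1}--\eqref{ANjump1}. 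The parity dependence \eqref{ANparity} appears here from the boundary values of $\lambda_j^{2N}$ across the bounded-oval cut $[z_{\min},z_{\max}]$, where $\lambda_2$ and $\lambda_3$ are exchanged with a sign that contributes $(-1)^N$ after interacting with the $E$-matrix.

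The heart of the argument is stage (iii). The limiting problem lifts naturally to $\mathcal R$: each column of $A_N$ should correspond to a meromorphic section on $\mathcal R$ whose divisor realizes the prescribed monodromy $(-1)^N$ across the bounded oval, avoids spurious poles, and meets the normalization \eqref{ANasymp} at $P_\infty$. I would construct $A_e$ and $A_o$ by theta-function (or Szeg\H{o}-kernel) methods on $\mathcal R$, exploiting the extra symmetries \eqref{Eq1ab}--\eqref{Eq3ab} to keep the expressions explicit; the two distinct divisor classes needed for the two possible monodromy signs explain the splitting \eqref{ANparity}. The triangular factor $L$ is then pinned down by \eqref{GNasymp}: expanding $G(z)^N$ near $P_\infty$ using \eqref{gjasymp} together with the cube-root branch structure of $E(z)$ forced by assumption (b), one sees that the leading behaviour is $z^N$ times a unit upper-triangular constant matrix, whose inverse is the required unit lower triangular $L$.

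Stage (iv) is comparatively routine: Airy-type local parametrices at the square-root branch points $B_{\min}$ and $B_{\max}$, and Bessel or confluent-hypergeometric parametrices near $z=\pm 1$ (where $\mathbb T$ crosses the real axis, meeting the support $[z_{\min},z_{\max}]$ at $z=1$ and lifting to the special point $P_1$ at $z=-1$), each matched to the global parametrix on the boundary of a shrinking disk. A small-norm RH argument then produces the $\mathcal O(e^{-cN}/(1+|z|))$ error, uniformly on compact subsets of $\overline{\mathbb C}\setminus\mathbb T$. The main obstacle I anticipate is stage (iii): making the theta-function ansatz simultaneously free of spurious poles, correctly normalized at $P_\infty$, and correctly parity-dependent. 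The symmetries \eqref{Eq1ab}--\eqref{Eq3ab}, which force the three distinguished points $P_0,P_\infty,P_1$ and the branch-point pair $B_{\min},B_{\max}$ into an unusually symmetric configuration on $\mathcal R$, should be exactly what is needed to make this step work out cleanly in closed form.
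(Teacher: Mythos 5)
Your stages (i)--(iii) track the paper's actual proof closely: a preliminary transformation built on the eigenvector matrix $E$ followed by a $g$-function normalization, a lens opening that produces the parity-dependent constant jumps, and a theta-function global parametrix on the genus-one surface that accounts for the parity split. The derivation of \eqref{Gform}, \eqref{GNasymp}, and the parity statement \eqref{ANparity} all run along the lines you describe (sections 6--8 of the paper).

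Stage (iv), however, contains a genuine error. The paper emphasizes that \emph{no} local parametrices are needed, and this is essential to obtain the exponentially small error in \eqref{PNasymp}. The support of the equilibrium measure $\mu$ is the \emph{entire} pair of circles $\Gamma_1\cup\Gamma_2$, with a positive real-analytic density (Theorem~1.4(a)); there are no soft edges. Consequently the lens can be opened to a full annulus $1-\eta<|z|<1+\eta$ whose boundary is strictly separated from $\mathbb T$, exactly as for orthogonal polynomials on the unit circle with positive analytic weight, and the jump matrices on the lens boundary are already $I_6 + \mathcal O(e^{-cN})$ (Lemma~7.3 and Corollary~9.2). Your proposed Airy parametrices at $B_{\min}$, $B_{\max}$ are misplaced: $z_{\min}$ and $z_{\max}$ lie on the real line strictly inside, respectively outside, the unit circle and are not on the orthogonality contour at all; the square-root singularities of $E$ there are handled algebraically through the RHP-X4/T4/S4/M4 boundedness conditions, not by a local matching. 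Likewise, there is nothing hard-edge-like happening at $z=\pm1$ where $\mathbb T$ crosses $\mathbb R$: the jumps of $S$ and of $M$ agree on the real line, so $R=SM^{-1}$ continues analytically there; also note that the point of $\mathcal R$ with $z=-1$ on $\Gamma_1\cup\Gamma_2$ is \emph{not} $P_1$ (which sits on the third sheet). Most importantly, if you actually inserted Airy or Bessel parametrices, the matching error would be $\mathcal O(N^{-1})$ rather than $\mathcal O(e^{-cN})$, contradicting the bound you claim at the end of stage (iv). Finally, a small slip in stage (iii): the leading constant in $G(z)^N$ at $P_\infty$ is unit \emph{lower} triangular (because the rows of $E(z)$ scale like $z^{2/3},z,z^{4/3}$, giving strictly lower triangular $L_1,L_2$ in the paper's Lemma~6.7), and $L$ is its inverse, which is again unit lower triangular; the inverse of a unit upper-triangular matrix is upper, not lower, triangular as you wrote.
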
	
The subscripts $\pm$ in \eqref{Gjump1} and \eqref{ANjump1}
are used to denoted the $+$ or $-$ boundary values on 
the real line.  For example, 
$G_+(x) = \lim\limits_{z \to x \atop \Im z > 0} G(z)$ and
$G_-(x) = \lim\limits_{z \to x \atop \Im z < 0} G(z)$, if
$x \in \mathbb [z_{\min},1) \cup (1,z_{\max}]$.
From \eqref{Gjump1} and \eqref{ANjump1} it follows that $A_N(z) G^N(z)$ is
analytic across $[z_{\min}, 1) \cup (1,z_{\max}]$.

The proof of Theorem \ref{theorem13} is in section \ref{section101}.

\subsection{Limiting zero distribution of $\det P_N$}
Since $\det L = 1$ (as $L$ is unit lower triangular) 
we find from Theorem \ref{theorem13} that 
\begin{equation} \label{detPNasymp} 
	\det P_N(z) = \left(\det A_N(z) + \mathcal{O} \left(\frac{e^{-cN}}{1+|z|} \right) \right)
		e^{N(g_1(z) + g_2(z) + g_3(z))} 
		\end{equation}
as $N \to \infty$, 
uniformly for $z$ in compact subsets of $\overline{\mathbb C} \setminus \mathbb T$.
This is a strong asymptotic formula for the scalar polynomials 
$\det P_N$ as $N \to \infty$. Observe that $\det P_N$ has degree $3N$.

It will follow from \eqref{detPNasymp} that most
of the zeros of $\det P_N$ tend to the unit circle as $N \to \infty$. The next result deals with the 
limit of the normalized zero counting measures
\begin{equation} \label{nudetPNdef} \nu(\det P_N) := \frac{1}{3N} \sum_{z: \det P_N(z) = 0} \delta_z 
 \end{equation}
as $N \to \infty$. The weak$^*$ limit of the
measures \eqref{nudetPNdef} is a probability
measure on $\mathbb T$, which will arise as
the pushforward of a measure $\mu$ on $\mathcal R$
under the projection map
\begin{equation} \label{pizdef} \pi_z : \mathcal R \to \overline{\mathbb C} : 
	p \mapsto z(p). \end{equation}

\begin{theorem} \label{theorem14}
	Suppose that \eqref{Eq1ab}--\eqref{Ineq3ab} hold.
For $j=1,2,3$, let $\Gamma_j$ denote the unit circle $|z|=1$ on the $j$th sheet of
	the Riemann surface $\mathcal R$ associated
	with the spectral curve \eqref{Wcharpoly}, see also Figure \ref{fig:sheets3}. 
	Then there is a probability measure 
	$\mu$ on $\Gamma_1 \cup \Gamma_2$ with a real analytic and positive
	density such that the following hold. 

\begin{enumerate}
	\item[\rm (a)] For every continuous function $h : \mathcal R \to \mathbb C$
	that is harmonic on $\mathcal R \setminus (\Gamma_1 \cup \Gamma_2)$
	one has 
	\begin{equation} \label{hbal}
		\int h d\mu = h(P_{\infty}) - h(P_1) + h(P_0)
		\end{equation} 
		where $P_0$, $P_1$, and $P_{\infty}$ are the three special points on the Riemann surface, see Figure \ref{fig:sheets2}.
	\item[\rm (b)] 
	Let $\mu_* = (\pi_z)_*(\mu)$ be the pushforward measure of $\mu$ under the projection map \eqref{pizdef}.
	Then $\mu_*$ is a probability measure on $\mathbb T$
	with the property that
\begin{equation} \label{Unu} 
	\Re \left(g_1(z) + g_2(z) + g_3(z) \right) = 3 \int \log |z-s| d\mu_* (s),
	\quad z \in \mathbb C \setminus \mathbb T.
	\end{equation}
	\item[\rm (c)] Let $P_N$ be the MVOP satisfying
	the matrix orthogonality \eqref{PNortho} with $B=C=1$. 
	Then 
$\mu_*$ is the weak$^*$-limit of the normalized zero counting measures \eqref{nudetPNdef}
of the polynomials $\det P_N$ as $N \to \infty$.
\end{enumerate}
\end{theorem}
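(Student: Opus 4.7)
The plan is to build Theorem~\ref{theorem14} on top of the strong asymptotic formula for $P_N$ (Theorem~\ref{theorem13}) together with the equilibrium measure construction in section~\ref{section15}. I take $\mu$ to be the equilibrium measure on $\mathcal R$ in the external field dictated by the orthogonality weight $W(z)^{2N}/z^{2N}$ in \eqref{PNortho}: it places unit source/sink masses at $P_0$, $P_\infty$, and $P_1$, which encode the poles and zero of $\lambda/z$ on the spectral curve. The three $g$-functions appearing in \eqref{Gform} are by construction the sheetwise logarithmic transforms of $\mu$, and the positivity and real-analyticity of its density on $\Gamma_1\cup\Gamma_2$ follow from the explicit solvability of the equilibrium problem under Assumptions~\ref{assump12}.

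For part (a), the balanced identity \eqref{hbal} is the defining (dual) variational property of the equilibrium measure on $\mathcal R$ in an external field with logarithmic singularities of residues $+1$, $-1$, $+1$ at $P_0$, $P_1$, $P_\infty$, respectively. I would realize an arbitrary harmonic test $h$ as the real part of a meromorphic function on a neighborhood of $\Gamma_1\cup\Gamma_2$ (locally, by adding a harmonic conjugate), deform the contour through $\mathcal R\setminus(\Gamma_1\cup\Gamma_2)$, and collect residues at the three distinguished points via the residue theorem on each sheet. Equivalently, \eqref{hbal} says that $\mu$ is the balayage of the signed divisor $P_0-P_1+P_\infty$ onto $\Gamma_1\cup\Gamma_2$.

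For part (b), set $g(z):=g_1(z)+g_2(z)+g_3(z)$. The diagonal factor in the jump \eqref{Gjump1} has determinant $+1$, so $g$ extends analytically across $[z_{\min},1)\cup(1,z_{\max}]$; as a symmetric function of the three eigenvalues of $W(z)$ it carries no further branch structure, hence is harmonic on $\mathbb C\setminus\mathbb T$. By \eqref{gjasymp} it satisfies $g(z)-3\log z=O(z^{-1/3})$ at infinity, so $\tfrac{1}{3}\Re g(z)-\log|z|\to 0$. Harmonicity off $\mathbb T$, the positivity of the jump of $g$ across $\mathbb T$ (inherited from the positivity of $\mu$), and this growth at infinity combine via the Riesz representation theorem to produce a probability measure $\mu_*$ on $\mathbb T$ satisfying \eqref{Unu}. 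Computing the jump of each $g_j$ across the piece of $\mathbb T$ lying on its sheet (which equals $2\pi i$ times the density of $\mu$ there) identifies $\mu_*$ with the pushforward $(\pi_z)_*(\mu)$.

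For part (c), $\det P_N$ is a monic scalar polynomial of degree $3N$. Taking determinants in \eqref{PNasymp} and using $\det L=1$ (since $L$ is unit lower triangular) yields \eqref{detPNasymp}, hence
\[ \frac{1}{3N}\log\bigl|\det P_N(z)\bigr|=\int\log|z-s|\,d\mu_*(s)+O(1/N) \]
uniformly on compact subsets of $\overline{\mathbb C}\setminus\mathbb T$ away from the finitely many zeros of $\det A_N$. Together with the fact that $\det P_N$ is monic of degree $3N$, the lower envelope principle for subharmonic functions forces the normalized zero counting measures to converge weak$^*$: $\nu(\det P_N)\xrightarrow{\ast}\mu_*$. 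The main obstacle is part (a): translating the variational characterization of $\mu$ provided in section~\ref{section15} into the clean form \eqref{hbal} requires careful bookkeeping of harmonic conjugates on the genus-one Riemann surface, and it is at this step that Assumption~\ref{assump12}(d) becomes essential, since the degeneration of the bounded oval would introduce an additional residue at the resulting node that would have to be accounted for separately.
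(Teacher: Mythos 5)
There is a genuine gap, concentrated in part (a). The identity \eqref{hbal} holds essentially by definition once $\mu$ is taken to be the balayage $\Bal(\delta_{P_\infty}-\delta_{P_1}+\delta_{P_0};\,\Gamma_1\cup\Gamma_2)$, which is the paper's definition; there is nothing to derive by contour deformation and residue collecting (and that sketch is in any case conceptually shaky, since a harmonic $h$ on the genus-one surface is generally not globally the real part of a single-valued meromorphic function). The actual mathematical content of part (a) is that this a priori \emph{signed} balayage measure is in fact a \emph{positive} measure with a real-analytic positive density, and you assert this follows from ``the explicit solvability of the equilibrium problem under Assumptions~\ref{assump12}'' without proving it. Positivity is not automatic here because the divisor being swept has a negative part at $P_1$. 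The paper handles it by pushing forward through the Abel map to the torus, unrolling to the vertical strip, conformally mapping the strip to the upper half-plane by $u\mapsto e^{\pi i u}$, and explicitly summing three Cauchy kernels with signs $+,-,+$ at $e^{\pi i/6},e^{\pi i/2},e^{5\pi i/6}$ to verify the total is strictly positive on $\mathbb R$. None of that is visible in your plan.

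There is also a smaller but real gap in part (c). You pass from \eqref{detPNasymp} to \eqref{detPNasym1} ``away from the finitely many zeros of $\det A_N$,'' but for $|z|<1$ it is not a priori clear that $\det A_e$ or $\det A_o$ has only finitely many zeros: if one of them vanished identically on the disc, the prefactor in \eqref{detPNasymp} would be $O(e^{-cN})$ there, and the asymptotic formula would carry no information about the zero distribution inside $\mathbb T$. The paper rules this out by a maximum-principle argument on the exterior disc: since $z^{-3N}\det P_N(z)\to 1$ at infinity, $\sup_{\mathbb T}|\det P_N|\ge 1$, which contradicts the exponential decay $\sup_{|z|\le 1}|\det P_N|\le Ce^{-cN}$ that would follow if $\det A_N\equiv 0$ in $|z|<1$. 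That step is indispensable and missing from your proposal. Your route for part (b) — Riesz decomposition of $\Re g$ plus a Sokhotski–Plemelj identification of the jump with the density — is a legitimate alternative to the paper's direct verification of the identity $\sum_{k=1}^3 G_{P_\infty}(z^{(k)},q)=-\log|z-z(q)|$, though it requires carrying out the jump computation you only gesture at.
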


Part (a) of Theorem \ref{theorem14} is proved in section \ref{section3}, and parts (b) and (c) are proved
in the final sections \ref{section102} and \ref{section103}
at the end of the paper.

\begin{figure}[t]
	\begin{center}
		\includegraphics[trim=0 12cm 2cm 2cm, clip,scale=0.5]{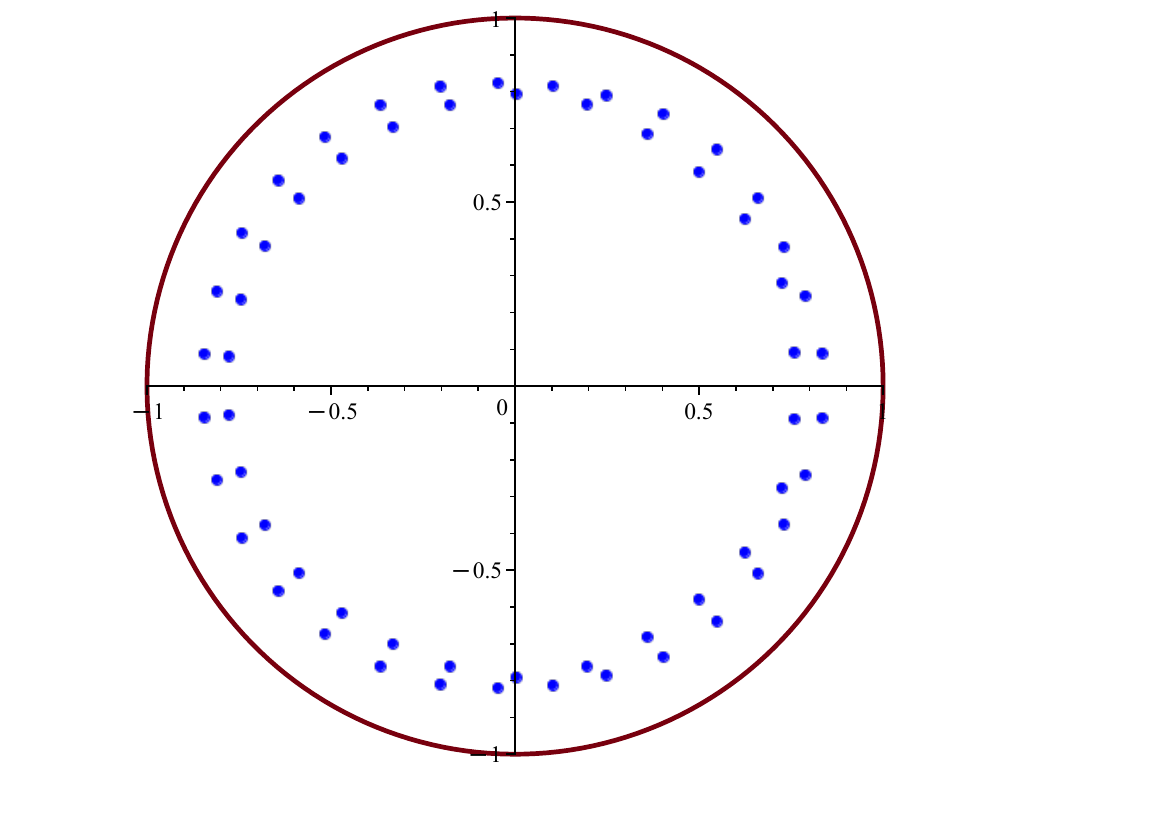}
		\caption{Zeros of $\det P_N$ with $N=20$ 
			coming from the model \eqref{Tjconcrete}
			with parameters $\alpha_1 = 0.2$, $\alpha_2 = 2$.
			The zeros tend to the unit circle, in the sense of
			weak convergence of normalized zero counting measures,
		by Theorem \ref{theorem14}.
			\label{fig:zeros}}
	\end{center}
\end{figure}

\begin{remark} \label{remark15}
The formula \eqref{hbal} determines  $\mu$, since for
a given continuous function $f$ on $\Gamma_1 \cup \Gamma_2$, one may
obtain $h$ as the solution of the Dirichlet problem for harmonic functions 
on $\mathcal R \setminus (\Gamma_1 \cup \Gamma_2)$ with $f$ as
boundary value,
and then $\int f d\mu  = h(P_{\infty}) - h(P_1) + h(P_0)$ by formula \eqref{hbal}.
In this way, $\mu$ is defined as a signed measure. It is part of the result of Theorem \ref{theorem14} that $\mu$ is 
positive, and thus a probability measure, since $\int d\mu = 1$.

The property \eqref{hbal} may equivalently be
expressed in terms of balayage measure of point masses
onto $\Gamma_1 \cup \Gamma_2$. Indeed,
\begin{equation} \label{muBal} 
	\mu = \Bal\left( \delta_{P_{\infty}} - \delta_{P_1} + \delta_{P_0}; \Gamma_1 \cup \Gamma_2 \right). \end{equation}
as, for any $p \in \mathcal R$, the balayage measure
(also known as harmonic measure) $\widehat{\delta}_{p}
	= \Bal(\delta_p; \Gamma_1 \cup \Gamma_2)$ satisfies
\begin{equation} \label{deltaBal}	
	 \int h d \widehat{\delta}_p = h(p) \end{equation}
whenever $h$ is continuous on $\mathcal R$ and harmonic on $\mathcal R \setminus (\Gamma_1 \cup \Gamma_2)$.
\end{remark}

See Figure~\ref{fig:zeros} for a plot of the zeros of 
$\det P_N$ for $N=20$ and for some choice of parameters. 
Note that for $N=20$, the zeros are not that close to the unit circle.
Also observe that the zeros seem to arrange themselves in
two rings. We do not have an explanation for this phenomenon.

\subsection{Equilibrium measure in external field}
\label{section15}

The probability measure $\mu$ from Theorem \ref{theorem14} is an equilibrium measure in an external field on a contour satisfying the $S$-property. 

It was argued in the work \cite{BGK23} of the author with Bertola and Groot 
that,  for MVOP with varying orthogonality \eqref{PNortho}, 
one should be looking for
a system of contours $\Gamma$ on $\mathcal R$, homotopic
in $\mathcal R \setminus \{ P_0, P_{\infty} \}$ to the union of
the unit circles $|z|=1$ on the three sheets,   
and a probability measure $\mu$
on $\Gamma$ with the $S$-property in the presence of the external field
$\Re V$ where
\begin{equation} \label{Vdef}  V = 2 \log z - 2 \log \lambda 
	\quad \text{ on } \mathcal R. \end{equation}
	
Equilibrium measures with external fields in the complex plane are very well-studied, see  \cite{ST97}. Curves with $S$-properties were first studied by Stahl \cite{Sta85,Sta86} in connection with orthogonal polynomials with complex weights. 
Later contributions include   
\cite{AS24+, KS15, MR11, MR16, MS16, Rak12}
where also varying weights and $S$-curves
in an external field were studied.

In the present work we are not in the complex plane, but on a Riemann surface, and the  potential theory is with respect to the bipolar Green's
kernel with one singularity at $P_{\infty}$, here denoted
by $G_{P_{\infty}}$, see \cite{BGK23, Chi18,Chi19,Chi20,Ski15}.

\begin{definition} \label{definition16}
The bipolar Green's kernel $G_{P_{\infty}}$ on $\mathcal R$ with pole
at $P_{\infty}$  is a real valued function defined
on $\{ (p,q) \in \mathcal R \times \mathcal R \mid p \neq q, p \neq P_{\infty}, q \neq P_{\infty} \}$ such that 
\begin{enumerate}
	\item[\rm (a)] for every $q \in \mathcal R \setminus \{ P_{\infty}\}$, the function $p \mapsto G_{P_{\infty}}(p,q)$
	is harmonic on $\mathcal R \setminus \{ P_{\infty}, q\}$,
	\item[\rm (b)] if $z$ is a local coordinate at $q$, then
	\begin{align} \label{bGfatq} G_{P_{\infty}}(p,q) = - \log |z(p)| + \mathcal{O}(1) \quad \text{ as } p \to q, \end{align}
	\item[\rm (c)] if $z_{\infty}$ is a local coordinate at $P_{\infty}$
	then
	\begin{align} \label{bGfatPinfty}  G_{P_{\infty}}(p,q) = \log |z_{\infty}(p)| + \mathcal{O}(1) \quad \text{ as } p \to P_{\infty}, 
		\end{align}
	\item[\rm (d)] $G_{P_{\infty}}(p,q) = G_{P_{\infty}}(q,p)$.
\end{enumerate}
\end{definition}
The properties (a)-(d) define $G_{P_{\infty}}$ uniquely, up to an additive constant. The additive constant is not really important
for what follows. However, we make
a  definite choice in Lemma \ref{lemma41} below.

On a complex torus $\mathbb C \slash (\mathbb Z + \tau \mathbb Z)$ with $\Im \tau > 0$, the bipolar Green's function with pole at $0$ (modulo $\mathbb Z + \tau \mathbb Z$) is known explicitly
in terms of the Jacobi elliptic function $\theta_1$, namely
\begin{equation} \label{G0uv} G_0(u,v) = \log \left| \frac{\theta_1(u) \theta_1(v)}{\theta_1(u-v)} \right| 
	- \frac{2\pi}{\Im \tau} (\Im u) (\Im v), \end{equation}
see e.g.\ Skinner \cite{Ski15}.  
To verify \eqref{G0uv} one needs the well-known facts that
the Jacobi elliptic function $\theta_1= \theta_1( \cdot \mid \tau)$ is an entire function
with simple zeros at the lattice points, and no other
zeros, and it satisfies the quasi-periodicity properties
\begin{equation} \label{theta1period}
	\begin{aligned} 
		\theta_1(u + 1 \mid \tau) & = -\theta_1(u \mid \tau), \\
		\theta_1(u + \tau \mid \tau) &  
		= -e^{-\pi i \tau} e^{-2 \pi i u} \theta_1(u \mid \tau).
	\end{aligned} 
\end{equation}
See \cite[Appendix A]{KM17+}
for explicit formulas in the higher genus case.

The bipolar Green's energy 
of a measure $\mu$ in external field $\Re V$  is 
\begin{equation} \label{Greenenergy} 
	3 \iint G_{P_{\infty}}(p,q) d\mu(p) d\mu(q) + \int \Re V d\mu. \end{equation}
A probability measure that minimizes \eqref{Greenenergy} among all
probability measures on a given set $\Gamma$ is called the equilibrium
measure of $\Gamma$ in the external field $\Re V$. 
We also define 
\begin{equation} \label{Gmudef} 
	U^\mu(p) = 3\int G_{P_{\infty}}(p,q) d\mu(q) \end{equation}
which is analogous to the logarithmic potential of a measure in the complex plane.  
The equilibrium measure is characterized by the Euler-Lagrange variational
conditions. In the present context these conditions are that there 
is a constant $\ell$ such  that  $2 U^{\mu} + \Re V \geq \ell$ on $\Gamma$, with equality on the support of $\mu$.

\begin{theorem} \label{theorem17}
	Let $\mu$ be the probability measure from Theorem \ref{theorem14} and let $V$ be given by \eqref{Vdef}.
	Then there is a constant $\ell$ such that
	\begin{equation} \label{muELvarcon} 
		2 U^{\mu} + \Re V \begin{cases} = \ell,  & \text{ on } \Gamma_1 \cup \Gamma_2, \\
			> \ell, & \text{ on } \Gamma_3,
			\end{cases} \end{equation}
and $\mu$ is the equilibrium measure of $\Gamma = \Gamma_1 \cup \Gamma_2 \cup \Gamma_3$ in the external field $\Re V$.
In addition, the property 
\begin{equation} \label{GreenSprop} 
	\frac{\partial}{\partial n_+} \left(2 U^{\mu}  + \Re V \right) =
	\frac{\partial}{\partial n_-} \left(2 U^{\mu} + \Re V \right) 
	\quad \text{ on } \supp(\mu) = \Gamma_1 \cup \Gamma_2, \end{equation}
is satisfied. Here $\frac{\partial}{\partial n_+}$ and $\frac{\partial}{\partial n_-}$ denote the derivatives in the two normal directions to $\Gamma_1 \cup \Gamma_2$.
\end{theorem}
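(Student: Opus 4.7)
The proof proceeds in three stages: the variational equality on the support $\Gamma_1\cup\Gamma_2$, the strict inequality on $\Gamma_3$ (with the ensuing equilibrium characterization), and the $S$-property. Stage~1 rests on a preliminary identity for the external field itself, namely
\[ \Re V = c_0 - 6\,G_{P_\infty}(\cdot,P_0) + 6\,G_{P_\infty}(\cdot,P_1) \quad\text{on } \mathcal R, \]
for some constant $c_0$. Indeed, under Assumptions \ref{assump12}, $V = 2\log z - 2\log\lambda$ has logarithmic singularities only at $P_0,P_1,P_\infty$, with coefficients $+6,-6,0$ in the local coordinates $z^{1/3},z+1,z^{-1/3}$ (the $P_\infty$ coefficient vanishes because $\lambda/z\to 1$). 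Thus $\Re V + 6\,G_{P_\infty}(\cdot,P_0) - 6\,G_{P_\infty}(\cdot,P_1)$ is harmonic on all of $\mathcal R$ (its $P_0,P_1$ singularities cancel by construction and the $P_\infty$ poles of the two Green's kernels cancel each other), so by compactness it equals a constant $c_0$. Substituting into $\phi := 2U^\mu + \Re V$ and using $U^\mu(p) = 3\int G_{P_\infty}(p,q)\,d\mu(q)$, the assertion $\phi = \ell$ on $\Gamma_1\cup\Gamma_2$ reduces to
\[ U^\mu = 3\,G_{P_\infty}(\cdot,P_0) - 3\,G_{P_\infty}(\cdot,P_1) + \tilde\ell \quad\text{on } \Gamma_1\cup\Gamma_2. \]
This is the classical balayage identity on the destination set: since $\mu = \Bal(\delta_{P_\infty}-\delta_{P_1}+\delta_{P_0};\,\Gamma_1\cup\Gamma_2)$, the potential $U^\mu$ on $\Gamma_1\cup\Gamma_2$ coincides with $3(G_{P_\infty}(\cdot,P_0)-G_{P_\infty}(\cdot,P_1))$ up to an additive constant, the $\delta_{P_\infty}$-source contributing only a constant on $\Gamma_1\cup\Gamma_2$ since $P_\infty$ is the reference pole of $G_{P_\infty}$. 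One makes this rigorous by applying \eqref{hbal} to the harmonic extension from $\Gamma_1\cup\Gamma_2$ into $\mathcal R\setminus(\Gamma_1\cup\Gamma_2)$ of the (smooth) restriction of $3(G_{P_\infty}(\cdot,P_0)-G_{P_\infty}(\cdot,P_1))$.

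Stage~2 concerns the strict inequality on $\Gamma_3$ and the equilibrium property. The former follows from the minimum principle applied to $\phi$ on the component $\Omega$ of $\mathcal R\setminus(\Gamma_1\cup\Gamma_2)$ containing $\Gamma_3$: $\phi$ is harmonic on $\Omega\setminus\{P_0,P_1,P_\infty\}$, equals $\ell$ on $\partial\Omega\subset\Gamma_1\cup\Gamma_2$, and $\phi\to +\infty$ at $P_1\in\Omega$ (since $\Re V\sim -6\log|z_1|$ there while $U^\mu$ remains finite); any negative logarithmic singularities of $\phi$ at $P_0$ or $P_\infty$ in $\Omega$ are handled by subtracting suitable bipolar potentials to reduce to a case with a minimum principle, yielding $\phi > \ell$ on $\Omega\setminus\partial\Omega\supset\Gamma_3$. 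Combined with Stage~1, the variational inequalities \eqref{muELvarcon} hold, and the Euler--Lagrange theorem for the Green-energy functional \eqref{Greenenergy}, together with strict convexity, identifies $\mu$ as the unique equilibrium measure of $\Gamma$ in the external field $\Re V$.

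Stage~3 establishes the $S$-property, i.e.\ the matching of the two one-sided outward normal derivatives of $\phi$ on $\Gamma_1\cup\Gamma_2$. On each component of $\mathcal R\setminus(\Gamma_1\cup\Gamma_2)$, the balayage construction provides an explicit representation of $U^\mu$ as a sum of bipolar Green's kernels with poles at the distinguished points lying in that component, plus a harmonic correction; this enables a direct computation of the one-sided normal derivatives of $\phi$ on each side. Their equality then follows from the identity for $\Re V$ from Stage~1 combined with the smoothness of the kernels $G_{P_\infty}(\cdot,P_0)$ and $G_{P_\infty}(\cdot,P_1)$ across $\Gamma_1\cup\Gamma_2$ (all the poles lie away from the support), the cancellations being forced by the balayage construction. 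The chief technical obstacle throughout is Stage~1: making the balayage identity precise in the bipolar setting, where the pole at $P_\infty$ of the Green's kernel must be absorbed into the additive constant, and propagating the resulting representation cleanly into the normal-derivative computation of Stage~3.
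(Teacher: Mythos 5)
Stage~1 is essentially the paper's approach: the identity $\Re V = c_0 - 6 G_{P_\infty}(\cdot,P_0) + 6 G_{P_\infty}(\cdot,P_1)$ is exactly what the paper uses (indeed, with the normalization of Lemma~\ref{lemma41} one has $G_{P_\infty}(\cdot,P_0) = -\tfrac13\log|z|$ and $G_{P_\infty}(\cdot,P_1) = -\tfrac13\log|\lambda|$), and the balayage identity for $U^\mu$ on $\Gamma_1\cup\Gamma_2$ is what the paper establishes via a careful argument involving the Green's kernel $G_\Omega$ of $\Omega=\mathcal R\setminus(\Gamma_1\cup\Gamma_2)$. Your appeal to \eqref{hbal} through a ``harmonic extension'' of the restriction is not quite the right move---\eqref{hbal} applied to that extension produces a single number, not the pointwise identity you need---but the underlying idea is sound and the technical fix (the $G_\Omega$ correction, plus the careful $a\to P_\infty$ limit) is what the paper supplies.

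Stage~2, however, has a genuine gap. The minimum principle applied to $\phi = 2U^\mu+\Re V$ on $\Omega$ does not yield $\phi>\ell$ on $\Omega\setminus\partial\Omega$: the set $\Omega$ is a single connected component containing \emph{all three} distinguished points, and $\phi\to-\infty$ at both $P_0$ and $P_\infty$ (coefficient $+6$ in $\log|u-\tfrac56|$ and $\log|u-\tfrac16|$ respectively). Indeed, the paper's Lemma~\ref{lemma42} shows $\phi<\ell$ on the open strips $0<\Re\mathcal A<\tfrac13$ and $\tfrac23<\Re\mathcal A<1$, so the claim ``$\phi>\ell$ on $\Omega\setminus\partial\Omega$'' is simply false. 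Your suggested fix of ``subtracting suitable bipolar potentials'' does not rescue the argument: subtracting, say, $6G_{P_\infty}(\cdot,P_0)$ removes the $P_0$ singularity but destroys the constancy of the boundary values on $\partial\Omega$, so the minimum principle then gives an inequality relative to a non-constant and negative boundary function. What the paper actually does is construct auxiliary harmonic functions $h_1$ on $\{0<\Re u<\tfrac23\}$ and $h_2$ on $\{\tfrac13<\Re u<1\}$, show $h_1=h_2$ on the overlap, glue them to recover $h=\phi-\ell$, and thereby establish that $h$ vanishes on $\Re u\in\{\tfrac13,\tfrac23\}$; only then does the minimum principle on the middle strip (which contains $\Gamma_3$ at $\Re u=\tfrac12$ and only the single $+\infty$ pole at $P_1$) give $\phi>\ell$ on $\Gamma_3$. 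This intermediate construction is the missing idea.

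Stage~3 also contains a gap. The $S$-property is \emph{not} an automatic consequence of the balayage construction: the paper's own Remark after Theorem~\ref{theorem17} states explicitly that property \eqref{GreenSprop} holds ``in case $B=C$ \ldots because of the symmetries in the model,'' and fails for $B\neq C$. Concretely, the paper proves the $S$-property by observing that $h(u)=h(1-\bar u)$ (equivalently, invariance under $(z,\lambda)\mapsto(1/\bar z,\bar\lambda/\bar z)$); this symmetry relies on the coincidence of the logarithmic coefficients at $P_\infty$ and $P_0$, which are $3(1+B)$ and $3(1+C)$, equal only when $B=C$. Your proposed ``direct computation'' from the balayage representation would have to handle the pieces $G_\Omega(\cdot,P_0)$, $G_\Omega(\cdot,P_1)$ and the balayage of $\delta_{P_\infty}$, all of which have nontrivial one-sided normal derivatives on $\partial\Omega$; there is no reason their contributions cancel without invoking the $B=C$ symmetry, and you never do so.
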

The conditions \eqref{muELvarcon} imply that the Euler-Lagrange variational
conditions associated with the minimization of \eqref{Greenenergy}
are satisfied. Therefore, $\mu$ is the equilibrium measure of $\Gamma = \Gamma_1 \cup \Gamma_2 \cup \Gamma_3$
in the external field $\Re V$.
The property \eqref{GreenSprop} is the $S$-property in the external field $\Re V$.

	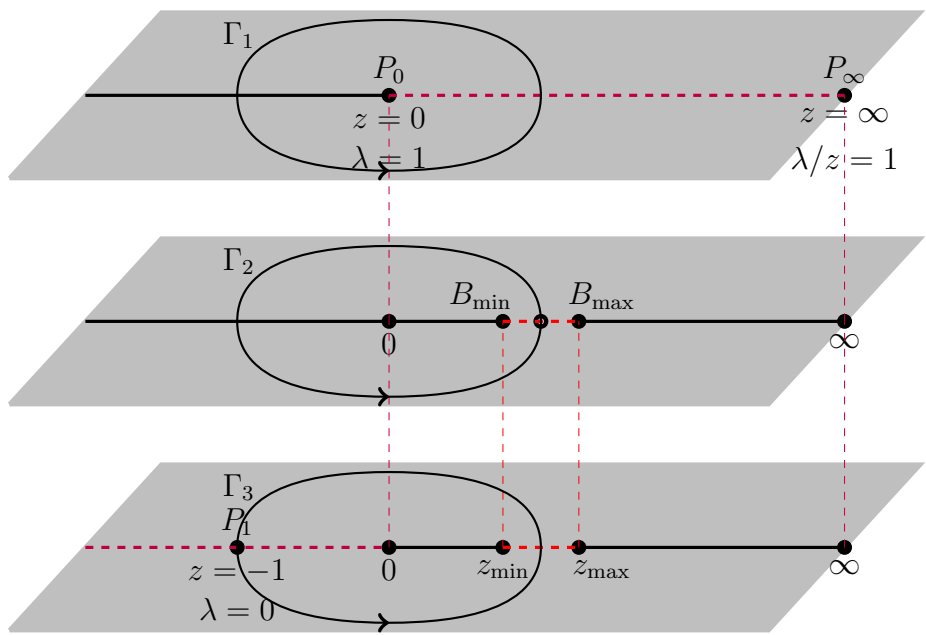
\begin{figure}[t]
	\begin{center}
		\begin{tikzpicture}(15,10)(0,0)
			\begin{scope}[ultra thick,decoration={
					markings,
					mark=at position 0.5 with {\arrow{>}}}
				] 
			\filldraw[gray!50!white] (6,2.9) --++(10,0) --++(2,2.2) --++(-10,0) --++(-2,-2.2);
			\filldraw[gray!50!white] (6,-0.1) --++(10,0) --++(2,2.2) --++(-10,0) --++(-2,-2.2);
			\filldraw[gray!50!white] (6,-3.1) --++(10,0) --++(2,2.2) --++(-10,0) --++(-2,-2.2);
			
			\filldraw (11,4)  circle (2pt);	 
			\filldraw (17,4)  circle (2pt);	
			\draw   (11,4) node[above] {$P_0$};
			\draw  (11,4) node[below] {$z = 0$};
			\draw  (11,3.5) node[below] {$\lambda=1$};
			\draw   (17,4) node[above] {$P_\infty$};
			\draw   (17,4) node[below] {$z=\infty$}; 
			\draw   (17,3.5) node[below] {$\lambda/z =1$};
			\draw[very thick,black] (7,4)--++(4,0);
			\draw[dashed,very thick,purple] (11,4)--++(6,0);
			
			\filldraw (11,1)  circle (2pt);	 
			\filldraw (12.5,1)  circle (2pt);	 
			\draw (13,1) circle (2pt);
			\filldraw (13.5,1)  circle (2pt);	 	 
			\filldraw (17,1)  circle (2pt);	 
			\draw   (11,1) node[below] {$0$};
			\draw   (17,1) node[below] {$\infty$};
			\draw   (12.2,1) node[above] {$B_{\min}$};
			\draw   (13.8,1) node[above] {$B_{\max}$};
			\draw[very thick,black] (7,1)--++(5.5,0);
			\draw[very thick,black] (13.5,1)--++(3.5,0);
			\draw[dashed, very thick,red] (12.5,1)--++(1,0);
			
			\filldraw (9,-2) circle (2pt);
			\filldraw (11,-2)  circle (2pt); 
			\filldraw (12.5,-2)  circle (2pt);	 
			\filldraw (13.5,-2)  circle (2pt);	 
			\filldraw (17,-2)  circle (2pt);	 
			\draw   (11,-2) node[below] {$0$};
			\draw   (9,-2) node[above] {$P_1$};
			\draw  (9,-2.5) node[below] {$\lambda = 0$};
			\draw  (9,-2) node[below] {$z=-1$};
			\draw   (17,-2) node[below] {$\infty$};
			\draw   (12.5,-2) node[below] {$z_{\min}$};
			\draw   (13.8,-2) node[below] {$z_{\max}$};
			\draw[very thick,black] (11,-2)--++(1.5,0);
			\draw[very thick,black] (13.5,-2)--++(3.5,0);
			\draw[dashed,very thick,purple] (7,-2)--++(4,0);
			\draw[dashed,very thick,red] (12.5,-2)--++(1,0);
			
			di			
			\draw[dashed,help lines,purple] (11,1)--(11,4);	
			\draw[dashed,help lines,purple] (17,1)--(17,4);	
			
			\draw[dashed,help lines,purple] (11,-2)--(11,1);
			\draw[dashed,help lines,red] (12.5,-2)--(12.5,1);
			\draw[dashed,help lines,red] (13.5,-2)--(13.5,1);	
			\draw[dashed,help lines,purple] (17,-2)--(17,1);

			\coordinate (a) at (9,4);
			\coordinate (b) at (11,3);
			\coordinate (c) at (13,4);
			\coordinate (d) at (11,5);
			\coordinate (e) at (9,4);             
			\path[draw, thick, use Hobby shortcut,closed=true]  (a)..(b)..(c)..(d)..(e); 
			\draw [postaction={decorate}] (11,3) node{};		
				 
			\draw   (9.4,4.8) node[left] {$\Gamma_1$};
			
			\coordinate (a) at (9,1);
			\coordinate (b) at (11,0);
			\coordinate (c) at (13,1);
			\coordinate (d) at (11,2);
			\coordinate (e) at (9,1);               
			\path[draw, thick, use Hobby shortcut,closed=true]  (a)..(b)..(c)..(d)..(e); 
			\draw [postaction={decorate}] (11,0) node{};		
			
			\draw   (9.4,1.8) node[left] {$\Gamma_2$};
			\coordinate (a) at (9,-2);
			\coordinate (b) at (11,-3);
			\coordinate (c) at (13,-2);
			\coordinate (d) at (11,-1);
			\coordinate (e) at (9,-2);               
			\path[draw, thick, use Hobby shortcut,closed=true]  (a)..(b)..(c)..(d)..(e); 
			\draw [postaction={decorate}] (11,-3) node{};			
			\draw   (9.4,-1.2) node[left] {$\Gamma_3$};
			\end{scope}
		\end{tikzpicture}
	\end{center}
	\caption{The unit circles $\Gamma_1, \Gamma_2$ and $\Gamma_3$
		on the three sheets of the Riemann surface.
	 \label{fig:sheets3}}
\end{figure}

\begin{remark}
The results in Theorems \ref{theorem13} and \ref{theorem14}
are for the case $B=C=1$ in \eqref{PNortho}. It is natural to ask whether the approach of this paper can be extended to more general $B, C > 0$.

The Riemann surface $\mathcal R$ depends on $W$ but not on
the parameters $B,C$. For general $B,C >0$ the external field
\eqref{Vdef} has to be modified to
	\begin{equation} \label{VdefBC}  V = (1+C) \log z - (B+C) \log \lambda 
		\quad \text{ on } \mathcal R. \end{equation}
If one then uses 
 \begin{equation} \label{muBalBC} 
 	\mu =   \Bal\left( \tfrac{1+B}{2} \delta_{P_{\infty}} -
	\tfrac{B+C}{2} \delta_1 
	+ \tfrac{1+C}{2} \delta_{P_0} ; \Gamma_1 \cup \Gamma_2 \right) 
	\end{equation}
instead of \eqref{muBal}, then the equality
$2 U^{\mu} + \Re V = \ell$ on $\Gamma_1 \cup \Gamma_2$,
in \eqref{muELvarcon} is still satisfied, for an appropriate constant $\ell$. 
However, there are three additional properties
that \eqref{muBalBC} would have to satisfy (cf.\ Theorem \ref{theorem17}), namely
\begin{itemize}
	\item[(1)] the signed measure \eqref{muBalBC} has to be positive, and thus a probability measure,
	\item[(2)] the inequality in \eqref{muELvarcon} should hold, but not necessarily in the strict sense,
	i.e., we need $2 U^{\mu} + \Re V \geq \ell$ on $\Gamma_3$,
	\item[(3)] the $S$-property in the external field $\Re V$ \eqref{GreenSprop} should hold. 
\end{itemize}
If these three properties hold, then  \eqref{muBalBC} is the minimizer of
\eqref{Greenenergy} among probability measures on $\Gamma = \Gamma_1 \cup \Gamma_2 \cup \Gamma_3$, and
the $S$-property on the support of $\mu$ is satisfied.

The property (3) holds in case $B=C$ in \eqref{muBalBC}, 
because of the symmetries in the model. 
If $B \neq C$, then one cannot expect that the $S$-property in external field holds on $\Gamma = \Gamma_1 \cup \Gamma_2 \cup \Gamma_3$. A major issue would then be
to determine an appropriate deformation 
of $\Gamma$
whose equilibrium measure in external field satisfies the $S$-property \eqref{GreenSprop}.

In case $B=C$, then the property (1) holds for $C=1$ (as we show in section \ref{subsec321} below) and then also for $0 < C \leq 1$. However, it will fail if $C$ is big enough. In this case the 
equilibrium measure of $\Gamma$ in the external field $\Re V$ 
is most likely  supported on a subset of $\Gamma_1 \cup \Gamma_2$ (and not on the
full $\Gamma_1 \cup \Gamma_2$).

Again, in case $B=C$, the property (2) holds for $C=1$ (as we show in section \ref{subsec322}), and probably also for $C \geq 1$,
but it will fail for small $C > 0$. Indeed, in the extreme case
$C=0$, the equilibrium measure is the balayage
of $\frac{1}{2} \delta_{P_{\infty}} + \frac{1}{2} \delta_{P_0}$
onto $\Gamma_1 \cup \Gamma_2 \cup \Gamma_3$, and $\Gamma_3$
is in the support. For small $C > 0$ 
part of $\Gamma_3$ will be in the support of
the equilbrium measure. 
\end{remark}

\begin{remark}
	One may also wonder what can be done in case the
	parameters $a_{jk}$, $b_{jk}$ in the model do not
	satisfy the equations \eqref{Eq1ab}--\eqref{Eq3ab}.
	Then the Riemann surface has the more general structure
	shown in Figure~\ref{fig:sheets}. Generically, there
	 are three points at infinity, say $P_{\infty}^{(1)}$,
	 $P_{\infty}^{(2)}$, $P_{\infty}^{(3)}$,
	 and one would have to change the bipolar Green's energy
	 to
	 \begin{equation} \label{Greenenergy2} \sum_{j=1}^3
	 	\iint G_{P_{\infty}^{(j)}}(p,q) d\mu(p) d\mu(q)
	 	+ \int \Re V d\mu \end{equation}
	with $V = 2 \log z - 2 \log \lambda$ in case
	of the regular hexagon, or given by \eqref{VdefBC}
	in the more general case. One would also have
	to change the definition of $U^{\mu}$ in \eqref{Gmudef}
	accordingly. An equilibrium measure on $\Gamma$ in the external field is now a minimizer of \eqref{Greenenergy2}
	among probability measures on $\Gamma$, and
	the task would be to look for $\Gamma$, that
	is homotopic to the union of three unit circles on
	the three sheets, such that the $S$-property in
	the external field is satisfied.
	
	This generalization also explains why we chose to have the factor $3$ in \eqref{Greenenergy} and \eqref{Gmudef}.
\end{remark}

\subsection{Overview} 

The asymptotic results in Theorem \ref{theorem13} and Theorem \ref{theorem14}(c)  are obtained from a Deift-Zhou steepest descent analysis of
the Riemann-Hilbert problem (RH problem) that characterizes the MVOP $P_N$. The equilibrium measure $\mu$ from Theorems \ref{theorem14}
and \ref{theorem17} serves
as a main ingredient in the analysis.

The RH problem for MVOP first appeared in \cite{CM12,GIM11}, as a generalization
of the RH problem for orthogonal polynomials, due to Fokas, Its, and Kitaev~\cite{FIK92}. A notable feature
is that the matrix valued function in the RH problem 
has size $2r \times 2r$, in case the MVOP have size $r \times r$.
Thus in our case, the size is $6 \times 6$.

The steepest descent analysis of RH problems was introduced by Deift and Zhou in \cite{DZ93}. It was first applied to orthogonal polynomials with varying weights in \cite{BI99, DKMVZ99}, see also \cite{Dei99}, where the RH problem is of size $2 \times 2$.
The RH problem corresponding to the matrix valued orthogonality \eqref{PNortho} takes the form of RH problem \ref{rhpforY} below.
As usual in RH problems, if $\gamma$ is an oriented contour,
and $f$ is analytic in a neighborhood of $\gamma$, but not on $\gamma$ itself,
then we use $f_+$ ($f_-$) to denote limiting
values, if they exist, from the left (right), when the contour is traversed
according to its orientation.

\begin{rhproblem} \label{rhpforY} \
	\begin{description}
		\item[RHP-Y1] 
		$Y : \mathbb C \setminus \Sigma_Y \to \mathbb C^{6 \times 6}$ is analytic,
		where $\Sigma_Y = \gamma$ is a simple closed contour going once around
		$0$ in counterclockwise direction.
		\item[RHP-Y2]  On $\gamma$ we have a jump $Y_+ = Y_- J_Y$
		where 
		\begin{equation} \label{Yjump} 
			J_Y(z) = \begin{pmatrix} I_3 & \frac{W(z)^{(B+C)N}}{z^{(1+C)N}}  \\
				0_3 & I_3 \end{pmatrix}, \qquad z \in \gamma. 
		\end{equation}
		\item[RHP-Y3]  As $z \to \infty$, we have 
		\begin{equation} \label{Yasymp} 
			Y(z) = \left(I_6 + O(z^{-1})\right) \begin{pmatrix} z^N I_3 & 0_3 \\ 0_3 & z^{-N} I_3 \end{pmatrix} \quad \text{ as } z \to \infty. 
		\end{equation}
	\end{description}
\end{rhproblem}
The solution to the Riemann Hilbert problem contains the MVOP $P_N$
in its left upper $3\times 3$ block, i.e.,
\begin{equation} \label{PNinY}
	P_N(z) = \begin{pmatrix} I_3 & 0_3 \end{pmatrix}
	Y(z) \begin{pmatrix} I_3 \\ 0_3 \end{pmatrix}.
\end{equation}
The full solution takes the form
\[ Y(z) = \begin{pmatrix} P_N(z) & \ds \frac{1}{2\pi i} \oint_{\gamma}
	\frac{P_N(s) W(s)^{(B+C)N}}{s^{(1+C)N} (s-z)} ds \\
	Q_{N-1}(z) & \ds \frac{1}{2\pi i} \oint_{\gamma}
	\frac{Q_{N-1}(s) W(s)^{(B+C)N}}{s^{(1+C)N} (s-z)} ds \end{pmatrix},
	\quad z \in \mathbb C \setminus \gamma,
\]
where $Q_{N-1}$ is a certain matrix valued polynomial of degree $\leq N-1$, see \cite{GIM11} for details.
For later use, we remark that
\begin{equation} \label{Ydet} 
	\det Y(z) = 1,  \qquad \text{ for } z \in \mathbb C \setminus \gamma.
\end{equation} It is a consequence of the fact
that the jump matrix in \eqref{Yjump} has determinant one
for every $z \in \gamma$,
and $\det Y(z) \to 1$ as $z \to \infty$,
because of \eqref{Yasymp}, see e.g.\ \cite{Dei99}.

Throughout the rest of the paper we take $B=C=1$ and we choose for $\gamma$ the unit circle $\mathbb T$ in the complex plane. 
The steepest descent analysis of the
RH problem \ref{rhpforY} consists of four transformations
\[ Y \mapsto X \mapsto T \mapsto S \mapsto R. \]

The first transformation $Y \mapsto X$ is a preliminary transformation based
on the eigenvectors of $W$. This transformation was also
done in the paper \cite{DK21} on $2$ periodic
tilings of the Aztec diamond. However, the further analysis is different. The MVOP and the associated RH problem in \cite{DK21} are remarkably simple, as the steepest descent analysis leads to
an exact formula for the MVOP at finite $N$, see also \cite{KP24+}. This is not the case in the current situation.
The preliminary transformation $Y \mapsto X$ 
is not needed when dealing
with matrix valued 
orthogonality with respect to a positive definite matrix weight 
on $[-1,1]$, see \cite{DKR23}.

The second transformation $X \mapsto T$ relies on the equilibrium measure $\mu$ with
the properties that are listed in Theorems \ref{theorem14} and
\ref{theorem17} above. It gives rise to the three $g$-functions,
$g_1, g_2, g_3$ that will be defined in Definition \ref{definition64} and whose properties are studied in
Section~\ref{section61}.

The third transformation $T \mapsto S$ is the
opening of lenses around $\mathbb T$. Since the measure
$\mu$ has full support on $\Gamma_1 \cup \Gamma_2$ with
a positive real analytic density, we can open the lens
in such a way that its boundary is fully separated from
$\mathbb T$. This is similar to what happens for 
orthogonal polynomials on the unit circle with positive analytic
weight, see \cite{MMS06}. It implies that
there is no need for local parametrices. It also ultimately implies
that we find asymptotic formulas with exponential small error terms.

The global parametrix is constructed in Section \ref{section8}.
We give explicit formulas with Jacobi theta functions
associated with a double cover of the Riemann surface.
With the global parametrix we define the final 
transformation $S \mapsto R$ in section~\ref{section9}.
It leads to a matrix valued function $R$ that is 
exponentially close to the identity matrix as $N \to \infty$.

The asymptotic formulas from Theorem \ref{theorem13} 
and Theorem \ref{theorem14} are proved in the final section
\ref{section10} of the paper.

In the appendix we describe the connection with doubly periodic lozenge tilings of the hexagon. This connection is the main motivation to
study the particular MVOP from \eqref{PNortho}. The steepest descent analysis that we do in this paper  has implications for the lozenge tilings, but we turn to do this in a separate future paper. 

\section{Spectral curve}
 The main result of this section is Proposition \ref{prop24} below.
To prepare for this we first find an explicit equation for
the spectral curve under the Assumptions \ref{assump12}.

\subsection{Explicit equation}

\begin{lemma} \label{lemma21} 
The spectral curve \eqref{Wcharpoly} takes the form
\begin{equation} \label{Plamz} 
	P(z,\lambda) = 	(\lambda-z-1)^3 - 27(1+\beta) \lambda z = 0
\end{equation}
for a certain $\beta > 0$.
\end{lemma}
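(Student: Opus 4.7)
My plan has four steps that progressively rewrite the characteristic polynomial and check the sign of the resulting parameter.

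First, I exploit that each $T_j(z) = A_j + zB_j$ with $B_j = b_{j3}e_3 e_1^{\top}$, and that $B_iB_j = 0$ together with $B_i A_j B_k = 0$ (the latter because $(A_j)_{13}=0$). Expanding the triple product therefore collapses to $W(z) = A_1A_2A_3 + zL$, where $A_1A_2A_3$ is upper triangular with diagonal $(a_{11}a_{21}a_{31},a_{12}a_{22}a_{32},a_{13}a_{23}a_{33}) = (1,1,1)$ by \eqref{Eq1ab}, and $L = B_1A_2A_3+A_1B_2A_3+A_1A_2B_3$ is lower triangular with diagonal $(b_{11}b_{22}b_{33}, b_{12}b_{23}b_{31}, b_{13}b_{21}b_{32}) = (1,1,1)$ by \eqref{Eq2ab}. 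Hence $\tr W(z)=3(1+z)$, and $\det W(z) = \prod_j \det T_j(z) = \prod_j(a_{j1}a_{j2}a_{j3}+b_{j1}b_{j2}b_{j3}z) = (1+z)^3$ by \eqref{Eq1ab} and \eqref{Eq3ab}.

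Second, the middle coefficient $p_1(z) := \tr\adj W(z) = \sum_{i<j}\lambda_i(z)\lambda_j(z)$ is a polynomial in $z$ of degree at most $2$ (as a sum of $2\times 2$ principal minors of the affine matrix $W(z)$), with $p_1(0)=3$ (all eigenvalues at $z=0$ are $1$ by Assumption (a)) and leading $z^2$-coefficient $3$ (since $\lambda_k(z)/z\to 1$ as $z\to\infty$ by Assumption (b)). Thus $p_1(z) = 3z^2 + \beta_1 z + 3$ for some $\beta_1\in\mathbb{R}$. Since $p_1(z) - 3(1+z)^2 = (\beta_1-6)z$, the characteristic polynomial reduces to
\[
    P(z,\lambda) = (\lambda-z-1)^3 + (\beta_1-6)\,z\lambda,
\]
which is the form \eqref{Plamz} upon setting $\beta := -(21+\beta_1)/27$.

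Third, for the inequality $\beta>0$ I examine the $\lambda$-discriminant. Depressing the cubic by $\mu=\lambda-z-1$ gives $\mu^3 - 27(1+\beta)z\mu - 27(1+\beta)z(z+1)=0$, whose $\mu$-discriminant factors as $27^3(1+\beta)^2 z^2\bigl[\,4(1+\beta)z-(z+1)^2\,\bigr]$. The double zero at $z=0$ accounts for the triple ramification at $P_0$; the remaining finite branch points are the roots of $Q(z) := -z^2+2(1+2\beta)z-1$, whose discriminant is $16\beta(1+\beta)$. Assumption (d) (the bounded oval is a proper real interval) forces these two roots to be real and distinct, which amounts to $\beta(1+\beta)>0$, so $\beta>0$ or $\beta<-1$.

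Finally, I rule out $\beta\leq -1$ using Assumption (c). Since $-1\notin\{0\}\cup[z_{\min},z_{\max}]$, the real matrix $W(-1)$ has exactly one real eigenvalue, which by (c) is $\lambda_3(-1)=0$; the remaining two eigenvalues form a complex-conjugate pair with sum $\tr W(-1)=0$, and are therefore purely imaginary and nonzero. Consequently $p_1(-1) = \lambda_1(-1)\lambda_2(-1) = |\lambda_1(-1)|^2 > 0$, and since $p_1(-1) = 27(1+\beta)$, this forces $\beta>-1$; combined with the preceding dichotomy, $\beta>0$. The bookkeeping in the first two paragraphs is essentially forced by \eqref{Eq1ab}--\eqref{Eq3ab}; the genuine obstacle is this final sign check, which requires combining the genus-one hypothesis (d) with the Harnack-type fact that $W(-1)$ carries a single real eigenvalue.
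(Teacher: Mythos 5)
Your proof is correct and reaches the same polynomial identity as the paper, but it proves the sign condition $\beta>0$ by a genuinely different route. The first half is a more explicit version of the paper's argument: the paper observes the structure \eqref{Wzstructure}, writes $P=(\lambda-z-1)^3-Q\lambda z-R(z)$ and uses Assumption~(c) to kill the degree~$\le 2$ polynomial $R$, whereas you compute $\det W(z)=\prod_j\det T_j(z)=(1+z)^3$ directly from \eqref{Eq1ab} and \eqref{Eq3ab}, and determine $p_1(z)=\tr\adj W(z)=3z^2+\beta_1z+3$ from (a) and (b). Both routes lead to $(\lambda-z-1)^3-27(1+\beta)\lambda z$. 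For the inequality, the paper computes the coefficient $27(1+\beta)=\tfrac{(A+B+C)^3}{ABC}$ explicitly (Maple-assisted, with $A,B,C>0$) and deduces $\beta\ge 0$ by AM--GM, then excludes $\beta=0$ by (d). You avoid the symbolic computation: the discriminant factorization $\Disc_\lambda P=27^3(1+\beta)^2z^2\bigl[4(1+\beta)z-(z+1)^2\bigr]$ plus the requirement that the two finite branch points $z_{\min},z_{\max}$ be real and distinct gives $\beta(1+\beta)>0$, and the Harnack fact that $W(-1)$ has a single real eigenvalue (necessarily $\lambda_3(-1)=0$, since $\det W(-1)=0$), together with $\tr W(-1)=0$, yields $p_1(-1)=27(1+\beta)=|\lambda_1(-1)|^2>0$, hence $\beta>0$. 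This is a nice alternative that trades the parameter-level computation for heavier reliance on the Section~1.2 sheet structure.

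Two small imprecisions worth noting. First, in your third step you attribute both reality and distinctness of $z_{\min},z_{\max}$ to Assumption~(d); in fact (d) only gives distinctness ($z_{\min}<z_{\max}$), while the reality (and positivity) of these branch points is a Harnack-curve fact, part of the ordering \eqref{zjordering} that the paper grants in Section~1.2. Second, your step~4 opens with ``$-1\notin\{0\}\cup[z_{\min},z_{\max}]$,'' which is circular if read as a consequence of (a)--(d) alone (for $\beta<-1$ one has $z_{\min}z_{\max}=1$ with both roots negative, so $-1\in(z_{\min},z_{\max})$). What you actually need, and what the paper supplies through \eqref{zjordering} under Assumptions~(a),(b), is that $z_3=0<z_4=z_{\min}$, so $-1$ lies in the interval $(-\infty,0)$ where $\lambda_3$ is the unique real eigenvalue. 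Once this is invoked explicitly, your argument is sound; you correctly flag that the final sign check is where the Harnack structure enters, but the writeup would benefit from making that dependency precise rather than phrasing it through (c) and (d).
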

\begin{proof} 
	From \eqref{Tj3} and \eqref{Wz} it
	follows that $W(z) = zL + U$ with a lower triangular matrix $L$ and an upper triangular $U$. The parts (a) and (b)
	of Assumption \ref{assump12} imply that the diagonal
	entries of $L$ and $U$ are equal to $1$. Thus
	\begin{equation} \label{Wzstructure} W(z) = \begin{pmatrix} z+1 & w_{12} & w_{13} \\
			w_{21} z & z+1 & w_{23} \\
			w_{31} z & w_{32} z & z+1 \end{pmatrix}
	\end{equation}
	with certain positive constants $w_{jk}$ that depend on
	the $a_{jk}$ and $b_{jk}$. From this structure
	it follows that 
	\[ \det\left(\lambda I_3 - W(z)\right) = (\lambda -z-1)^3
		-Q \lambda z - R(z) \] 
	with a constant $Q$ and a polynomial $R(z)$ of
	degree $\leq 2$ in $z$. Assumption \ref{assump12} (c)
	tells us that  $\lambda = 0$ can only be an eigenvalue
	of $W(z)$ for $z=-1$. This means that
	$\det W(z) = (z+1)^3 + R(z)$ has $z=-1$ as its only zero. 
	Since $R$ has degree $\leq 2$, we conclude that $R(z) \equiv 0$ and therefore
	\[ P(z, \lambda ) = (\lambda -z-1)^3 - Q \lambda z \] 
	for some $Q$.

	Assisted by Maple we find that 
	$Q = \frac{(A+B+C)^3}{ABC}$ and
	$A = a_{11} a_{12} b_{21} b_{22}$, $B = a_{11} a_{23} b_{12} b_{21}$
	and $C = a_{22} a_{23} b_{11} b_{12}$.
	The inequality between the arithmetic and geometric means
	$\frac{A + B + C}{3} \geq \sqrt[3]{ABC}$ then gives us that $Q \geq 27$. 
	Then we can write $Q$ in the form $27(1+\beta)$ with $\beta \geq 0$ 
	and thus obtain \eqref{Plamz}. 
	
	For $\beta=0$ the spectral curve \eqref{Plamz} has the  explicit solution 
	$\lambda = (1+z^{1/3})^3$ 	and the genus is zero. Thus, by Assumptions \ref{assump12} (d) we have $\beta > 0$. 
\end{proof}
	For the special parameters \eqref{Tjconcrete} we have
	$ \beta = \frac{(1 + \alpha_1 + \alpha_2)^3}{27 \alpha_1 \alpha_2} - 1$.
\begin{remark} \label{remark22}
	The discriminant of \eqref{Plamz} with respect to $\lambda$ is
	\begin{equation} \label{DiscP} 
		\Disc_{\lambda} P = -19683 (1+\beta)^2 z^2
		(z^2-(2+4\beta) z + 1) \end{equation}
	which 	gives the two branch points
	\begin{equation} \label{zminzmax}
		z_{\min} = 1+ 2\beta - 2 \sqrt{\beta(1+\beta)}, \qquad
		z_{\max} = 1+ 2\beta + 2 \sqrt{\beta(1+\beta)}.
	\end{equation}
	which for $\beta > 0$ indeed satisfy
	\[ 0 < z_{\min} < 1 < z_{\max} = z_{\min}^{-1} < \infty. \] 
	We use $B_{\min}$, $B_{\max}$ to denote the corresponding points on
	the Riemann surface, see also Figure \ref{fig:sheets2}. 
	
	The polynomial $\lambda \mapsto
	P(\lambda,z_{\min})$ thus has a double
	zero, which turns out to be
	$-1-\beta + \sqrt{\beta(1+\beta)}$
	and one simple zero, which is 
	$8 (1+\beta - \sqrt{\beta(1+\beta)})$.
	It is easy to see that the simple zero is larger
	in absolute value than the double zero. 
	We thus find
	\begin{align*} 
		\lambda_1(z_{\min}) & = 8 \left(1+\beta - \sqrt{\beta(1+\beta)}\right), \\
		\lambda_2(z_{\min}) = \lambda_3(z_{\min}) & =
		-1-\beta + \sqrt{\beta(1+\beta)}.
	\end{align*}
	Similarly,
	\begin{align*} 
		\lambda_1(z_{\max}) & = 8 \left(1+\beta + \sqrt{\beta(1+\beta)}\right), \\
		\lambda_2(z_{\max}) = \lambda_3(z_{\max}) & =
		-1-\beta - \sqrt{\beta(1+\beta)}.
	\end{align*}
	Thus we have the explicit $\lambda$-coordinates 
	\begin{equation} \label{B1B2}
		\begin{aligned}
		z(B_{\min}) = z_{\min}, \quad & 	\lambda(B_{\min}) = -1-\beta + \sqrt{\beta(1+\beta)}, \\
		z(B_{\max}) = z_{\max}, \quad & 	\lambda(B_{\max}) = -1-\beta - \sqrt{\beta(1+\beta)}.
		\end{aligned}
	\end{equation}
	We also independently verified that the
	branch points $B_{\min}$ and $B_{\max}$ connect
	the second and third sheets.
	
	As $\beta \to 0+$, the bounded oval shrinks to a node at $(z,\lambda) = (1, -1)$.
\end{remark}

Observe that \eqref{Plamz} has the symmetries
\begin{equation} \label{involutions} 
	(z,\lambda) \mapsto (-\lambda, -z), \quad (z,\lambda) \mapsto (\tfrac{1}{z}, \tfrac{\lambda}{z} ), 
\end{equation}
which are holomorphic involutions, in addition to the anti-holomorphic involution \begin{equation} \label{involutionanti} 
	(z,\lambda) \mapsto (\overline{z},\overline{\lambda}).
	\end{equation}

For future reference we note the following.

\begin{lemma} \label{lemma23}
	\begin{enumerate}
		\item[\rm (a)] 
		The functions $\lambda_1$, $\lambda_2$, $\lambda_3$ are
		 analytic in $\mathbb C \setminus \mathbb R$
		 with
		 \begin{equation} \label{lambdajstrict} 
		 	|\lambda_1(z)| > |\lambda_2(z)| > |\lambda_3(z)| > 0,
		 	\quad z \in \mathbb C \setminus \mathbb R. \end{equation}
	\item[\rm (b)] The functions have boundary values on the real line,
 such that
	$\Lambda = \diag(\lambda_1, \lambda_2, \lambda_3)$ satisfies
	\begin{equation} \label{Lambdajump} 
	 \Lambda_+ =  \begin{cases}
	 	\sigma_{12} \Lambda_- \sigma_{12}  & \text{on } (-\infty,0], \\
	 	\sigma_{23} \Lambda_- \sigma_{23}  & \text{on } [0,z_{\min}] \cup [z_{\max},\infty), \\
	 \Lambda_- & \text{on } [z_{\min}, z_{\max}]
	 	\end{cases} \end{equation}
 	where $\sigma_{12}$ and $\sigma_{23}$ are used to denote the permutation matrices
 	\begin{equation} \label{sigmadef} 
 		\sigma_{12} = \begin{pmatrix} 0 & 1 & 0 \\
 		1 & 0 & 0 \\
 		0 & 0 & 1 \end{pmatrix},
 		\quad \sigma_{23} = \begin{pmatrix} 1 & 0 & 0  \\
 			0 & 0 & 1 \\
 			0 & 1 & 0 \end{pmatrix}. \end{equation}
 	\item[\rm (c)] 
 	With the constant $c_{\lambda} = 3 (1+\beta)^{1/3} > 0$ we have 
 	(we use principal branches of the fractional powers)
\begin{align} \label{lambda1asymp}
	\lambda_1(z) & = z + c_{\lambda} z^{2/3} + \frac{c_{\lambda}^2}{3} z^{1/3} + \mathcal{O}(1), \\ \label{lambda2asymp}
	\lambda_2(z) & = z + c_{\lambda} \omega^{\pm 1} z^{2/3} + \frac{c_{\lambda}^2}{3} \omega^{\mp 1} z^{1/3} + \mathcal{O}(1), \\
	\label{lambda3asymp}
	\lambda_3(z) & = z + c_{\lambda} \omega^{\mp 1} z^{2/3} + 
	\frac{c_{\lambda}^2}{3} \omega^{\pm 1} z^{1/3} +  \mathcal{O}(1), 
\end{align}
as $z \to \infty$ with $\pm \Im z > 0$ and $\omega = e^{2\pi i/3}$.
\end{enumerate}
\end{lemma}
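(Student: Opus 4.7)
Part (a) reduces to the fact that the discriminant \eqref{DiscP} vanishes only at the real points $z=0,z_{\min},z_{\max}$, so on $\mathbb C\setminus\mathbb R$ the cubic $\lambda\mapsto P(z,\lambda)$ has three distinct roots and thus three locally (and, since each half-plane is simply connected, globally) analytic branches. The strict magnitude ordering \eqref{lambdajstrict} is a restatement of the Harnack property \eqref{lambdajordered}, and the nonvanishing $\lambda_3\neq0$ follows from Assumption~\ref{assump12}(c), since $\lambda=0$ only occurs at $z=-1\in\mathbb R$.

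For part (b) the plan is to identify, on each open subinterval of $\mathbb R\setminus\{0,z_{\min},z_{\max}\}$, which $\lambda_j$ is real and which two form a complex conjugate pair; magnitude ordering combined with complex conjugation then exchanges the two conjugate eigenvalues across $\mathbb R$, producing the corresponding permutation. Plugging $z=-1$ into \eqref{Plamz} gives $\lambda(\lambda^2+27(1+\beta))=0$, so on $(-\infty,0)$ the real root is $\lambda_3=0$ (smallest modulus) and $\lambda_1,\lambda_2$ are a conjugate pair, yielding $\sigma_{12}$. For $z>0$ large, the asymptotics of part (c) show that $\lambda_1\sim z$ is real while $\lambda_2,\lambda_3$ are a conjugate pair, giving $\sigma_{23}$ on $(z_{\max},\infty)$ and, by constancy of the sign of the discriminant on each component, also on $(0,z_{\min})$. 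On $(z_{\min},z_{\max})$, Remark~\ref{remark22} shows that $\lambda_2=\lambda_3$ only at the branch points $B_{\min},B_{\max}$, and the bounded-oval picture forces all three eigenvalues to be real and pairwise distinct throughout the open interval; hence magnitude ordering extends continuously across $\mathbb R$ here, and $\Lambda_+=\Lambda_-$.

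For part (c), set $\mu=\lambda-z-1$, so that \eqref{Plamz} reads $\mu^3=27(1+\beta)z(\mu+z+1)$. The dominant balance as $z\to\infty$ is $\mu^3\sim c_\lambda^3z^2$, producing three Puiseux branches $\mu=c_\lambda\omega^k z^{2/3}+a_1 z^{1/3}+\mathcal O(1)$ for $k=0,1,2$. Substituting and matching the coefficient of $z^{5/3}$ yields $3c_\lambda^2\omega^{2k}a_1=c_\lambda^4\omega^k$, hence $a_1=\tfrac{c_\lambda^2}{3}\omega^{-k}$. To assign each $k$ to the correct $\lambda_j$ in the upper half-plane, a direct expansion gives $|\lambda_k(z)|=|z|+c_\lambda|z|^{2/3}\cos\bigl(\tfrac{2\pi k}{3}-\tfrac{\arg z}{3}\bigr)+\mathcal O(|z|^{1/3})$; for $\arg z\in(0,\pi)$ this cosine is strictly decreasing in $k\in\{0,1,2\}$, identifying $k=0,1,2$ with $\lambda_1,\lambda_2,\lambda_3$ respectively and giving \eqref{lambda1asymp}--\eqref{lambda3asymp} with the upper choice of signs. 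The lower half-plane follows by complex conjugation, which interchanges $\omega\leftrightarrow\omega^{-1}$.

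The main obstacle I expect is the case $(z_{\min},z_{\max})$ in part~(b): one must verify that $\lambda_2,\lambda_3$ remain pairwise distinct throughout the open interval (and not only at its endpoints), so that magnitude ordering provides a consistent analytic labeling on both sides of the real axis. This is exactly what distinguishes this interval from the three branch cuts and makes $\Lambda_+=\Lambda_-$ hold there.
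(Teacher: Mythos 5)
Your argument is correct and follows essentially the same route as the paper, which for parts (a)--(b) simply cites the Harnack-curve structure and for part (c) performs a Puiseux expansion and then identifies branches by the magnitude ordering. Your proposal usefully fleshes out (a)--(b) via the sign of the discriminant \eqref{DiscP} on each real subinterval, which is precisely what the Harnack/sheet picture encodes, and in part (c) you identify the three branches by the explicit expansion $|\lambda_k(z)|=|z|+c_\lambda|z|^{2/3}\cos\bigl(\tfrac{2\pi k}{3}-\tfrac{\arg z}{3}\bigr)+\mathcal O(|z|^{1/3})$, whereas the paper instead pins down $\lambda_1$ by noting it is real on $(0,\infty)$ and deduces the remaining two from the ordering; both are minor variants of the same idea. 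Your coefficient matching (giving $a_1=\tfrac{c_\lambda^2}{3}\omega^{-k}$) is correct and in fact avoids the small misprint in the paper's displayed relation $3c_1^2c_2=27(1+\beta)$, which should read $3c_1c_2=27(1+\beta)$ (equivalently $c_2=c_1^2/3$) to be consistent with the stated result \eqref{lambda1asymp}.
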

\begin{proof} Parts (a) and (b) are consequences of the fact that we are
	dealing with a Harnack curve with the sheet structure as in
	Figure \ref{fig:sheets2}.

	For part (c) we use the spectral curve equation \eqref{Plamz}.
	For $z \to \infty$, any of the three solutions behaves like
   $z + c_{1} z^{2/3} + c_{2} z^{1/3} + \mathcal{O}(1)$
	with $c_{1}^3 = 3c_1^2 c_2 = 27(1+\beta)$.
	Since $\lambda_1(z)$ is real for real $z > 0$, we have the
	expansion \eqref{lambda1asymp} with real coefficients.
	The other solutions have expansions with coefficients
	that are either $c_1 = c_{\lambda} \omega$ and $c_2 = \frac{c_{\lambda}^2}{3} \omega^{-1}$, or  $c_1 = c_{\lambda} \omega^{-1}$
	and $c_2 = \frac{c_{\lambda}^2}{3} \omega$.
	Taking into the  ordering \eqref{lambdajstrict} of absolute values,
	we find \eqref{lambda2asymp} and \eqref{lambda3asymp}.
\end{proof} 	

\subsection{Abel map and complex torus}

The Riemann surface $\mathcal R$ has genus one. We choose
a canonical homology basis $\{ \mathbf{a}, \mathbf{b} \}$ where
$\mathbf{a}$ is the unbounded oval, oriented from left to right,
and $\mathbf{b}$ is $\Gamma_1 \cup \Gamma_2$, with counterclockwise
orientation. 
There is a unique holomorphic differential $\omega$ on $\mathcal R$ with $\oint_{\mathbf{a}} \omega =1$.  Then
$\oint_{\mathbf{b}} \omega = \tau \in i \mathbb R^+$. We write
\[ L = \mathbb Z + \tau \mathbb Z. \] 
The Abel map
\begin{equation} \label{AbelAdef} 
	\mathcal A : \mathcal R \to \mathbb C \slash L
	: p \mapsto \frac{1}{2} + \int_{P_1}^p \omega 
	\quad \mod L
	\end{equation} 
is a conformal map from $\mathcal R$ to the complex torus 
$\mathbb C \slash L$. 
For convenience we choose $P_1 = (-1,0)$ as base point for the integration
in \eqref{AbelAdef}, and we add the shift by $\tfrac{1}{2}$ so that
$\mathcal A(P_1) \equiv \frac{1}{2}$ modulo the lattice $L$.

The following result is illustrated in Figure \ref{fig:Abelmap}.

\begin{figure}[t]
	\begin{center}
		\begin{tikzpicture}[scale=0.75](15,10)(0,0)
			\filldraw[gray!50!white] (-6,0) --(6,0) --(6,6) --(-6,6) --(-6,0);
			
			\draw (-6,0) node[below] {$0$};
			\draw (6,0) node[below] {$1$};
			\draw (-6,6) node[left] {$\tau$};
			\draw (6,6) node[right] {$1+\tau$};
			
			\filldraw (0,0)  circle (2pt);	 
			\filldraw (4,0)  circle (2pt);	 
			\filldraw (-4,0)  circle (2pt);	 	 
			\draw (0,0) node[below] {$\mathcal A(P_1) \equiv 1/2$};
			\draw (4,0) node[below] {$\mathcal A(P_0) \equiv 5/6$};
			\draw (-4,0) node[below] {$\mathcal A(P_{\infty}) \equiv 1/6$};
			
			\filldraw (0,6)  circle (2pt);	 
			\filldraw (4,6)  circle (2pt);	 
			\filldraw (-4,6)  circle (2pt);	 	 
			\draw (0,6) node[above] {$\mathcal A(P_1)$};
			\draw (4,6) node[above] {$\mathcal A(P_0)$};
			\draw (-4,6) node[above] {$\mathcal A(P_{\infty})$};
			
			\draw[dashed,black] (0,0)--(0,6);
			\draw[dashed,black] (-6,0)--(-6,6);
			\draw[dashed,black] (6,0)--(6,6);
			\draw (-6,3) node[left] {$\mathcal A(\Gamma_1 \cup \Gamma_2)$};
			\draw (0,3) node[right] {$\mathcal A(\Gamma_3)$};
			\draw (6,3) node[right] {$\mathcal A(\Gamma_1 \cup \Gamma_2)$};
		\end{tikzpicture}
	\end{center}
	\caption{Complex torus $\mathbb C \slash (\mathbb Z + \tau \mathbb Z)$ and the image of the Riemann surface
		under  the Abel map \eqref{AbelAdef}. \label{fig:Abelmap}}
\end{figure}
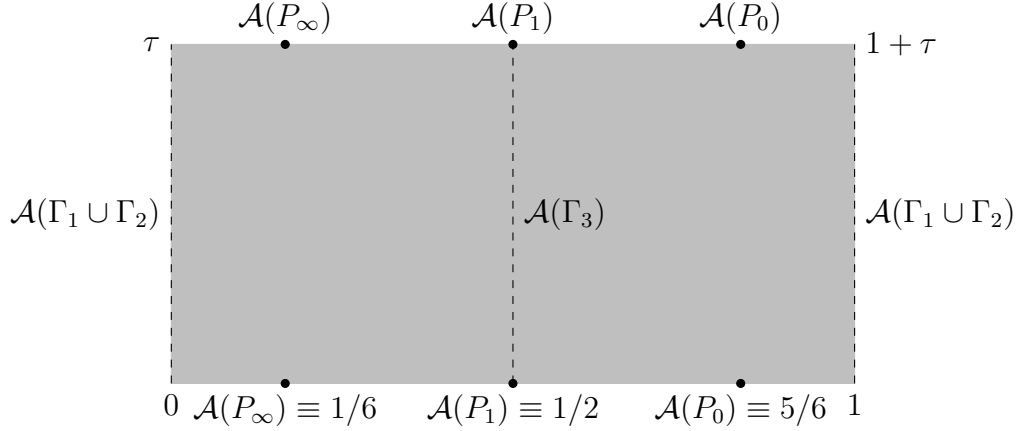

\begin{proposition} \label{prop24}
	\begin{enumerate}
		\item[\rm (a)] 
	The unbounded oval (i.e., the $\bf a$-cycle) is mapped to $[0,1]$ modulo $L$ with 
	\begin{equation} \label{Abelvalues} 
		\mathcal A(P_{\infty}) \equiv \frac{1}{6}, \quad 
		\mathcal A(P_1) \equiv \frac{1}{2}, \quad
	\mathcal A(P_0) \equiv \frac{5}{6}.
	\end{equation}
	
		\item[\rm (b)] The union $\Gamma_1 \cup \Gamma_2$ of
		the two unit circles on the first two sheets (i.e., the $\bf b$-
		cycle) is mapped by the Abel map to $[0,\tau]$ modulo $L$,
		and the unit circle $\Gamma_3$ on the third sheet is
		mapped to $\frac{1}{2} + [0,\tau]$ modulo $L$.
		
		\item[\rm (c)] For the point $B_0 = (1, \lambda_1(1))$ we
		have $\mathcal A(B_0) \equiv 0$ modulo $L$. 
	\end{enumerate}
\end{proposition}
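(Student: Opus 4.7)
The plan is to exploit the automorphism group of the spectral curve \eqref{Plamz} through the Abel map. In addition to the anti-holomorphic involution $\iota:(z,\lambda)\mapsto(\bar z,\bar\lambda)$ and the two holomorphic involutions $\sigma_1,\sigma_2$ from \eqref{involutions}, I would use $\rho:=\sigma_1\circ\sigma_2$, which sends $(z,\lambda)\mapsto(-\lambda/z,-1/z)$ and, by a direct calculation, satisfies $\rho^3=\mathrm{id}$. On the three special points, $\rho$ cycles $P_1\to P_0\to P_\infty\to P_1$, $\sigma_2$ fixes $P_1$ and swaps $P_0\leftrightarrow P_\infty$, and $\iota$ fixes each of $P_0,P_1,P_\infty$ as well as the whole unbounded oval pointwise.

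Because each of $\iota,\sigma_2,\rho$ preserves $P_1$ (as a fixed point or as an element of an invariant set), the induced automorphisms of the torus $\mathbb C/L$ fix $\mathcal A(P_1)\equiv 1/2$. A case analysis pins them down: $\iota$ must be $p\mapsto\bar p$ (the unique anti-holomorphic involution with that fixed point); $\sigma_2$, being a holomorphic involution of the form $p\mapsto -p+d$, is forced by the fixed-point condition at $1/2$ to have $d\equiv 0\bmod L$, so it acts as $p\mapsto -p$; and $\rho$ is a holomorphic automorphism of the torus of order three, necessarily a translation $p\mapsto p+c$ with $3c\equiv 0\bmod L$ (generically, no non-translation order-three automorphism exists). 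The fixed set of $p\mapsto\bar p$ on $\mathbb C/L$ is the union of the two circles $\{\Im p\equiv 0\}$ and $\{\Im p\equiv \Im(\tau)/2\}\bmod L$; the unbounded oval, as the $\iota$-fixed component containing $P_1$, maps onto $[0,1]\bmod L$ (the first assertion of (a)), and the bounded oval onto the shifted circle. Since $P_0,P_\infty$ lie on the unbounded oval, $c$ is real, so $c\in\{1/3,2/3\}$.

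To determine the sign of $c$ I would use $\rho^*\omega=\omega$. The general relation $\rho^*\omega=\alpha\omega$ with $\alpha^3=1$ reduces to $\alpha=1$ because $\omega$ is real on the unbounded oval (since $\iota$ preserves it pointwise), so the identity $\int_{P_0}^{P_\infty}\omega=\alpha\int_{P_\infty}^{P_1}\omega$ (obtained by change of variables under $\rho$) forces $\alpha\in\mathbb R$. Then the three arcs of the $\mathbf a$-cycle cut out by $\{P_0,P_\infty,P_1\}$ have equal $\omega$-length $1/3$. The left-to-right traversal of the oval visits these points in cyclic order $P_0\to P_\infty\to P_1\to P_0$ (first along the positive real axis on sheet one from $P_0$ to $P_\infty$, then through $P_\infty$ onto sheet three, then along the negative real axis from $P_\infty$ through $P_1$ back to $P_0$), which matches the action of $\rho$. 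Hence $c=+1/3$, giving $\mathcal A(P_0)\equiv 5/6$ and $\mathcal A(P_\infty)\equiv 1/6\bmod L$, completing (a).

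For (c), the point $B_0=(1,\lambda_1(1))$ is fixed by $\sigma_2$ (as $z=1$ satisfies $z=1/z$), so $\mathcal A(B_0)$ is a fixed point of $p\mapsto -p$, i.e., a $2$-torsion element of $\{0,1/2,\tau/2,(1+\tau)/2\}$. Since $B_0$ lies on the unbounded oval (sheet one at the positive real value $z=1$), $\mathcal A(B_0)\in[0,1]\cap\text{(2-torsion)}=\{0,1/2\}$; since $B_0\neq P_1$, we conclude $\mathcal A(B_0)\equiv 0$. For (b), the image $\mathcal A(\Gamma_1\cup\Gamma_2)$ is a loop of $\mathbf b$-homology class on the torus, hence a translate of $[0,\tau]$, and since $B_0\in\Gamma_1$ has Abel image $0$ this translate is $[0,\tau]\bmod L$. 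The cycle $\Gamma_3$ contains $P_1$ (because $z=-1\in\mathbb T$ and $\lambda_3(-1)=0$ makes $P_1$ the sheet-three lift of $z=-1$), with Abel image $1/2$, so $\mathcal A(\Gamma_3)=1/2+[0,\tau]\bmod L$. The main obstacle is the careful identification $\rho^*\omega=\omega$ together with the orientation matching between the oval traversal and the $[0,1]$ parametrization; if the symmetry/orientation argument feels too delicate, one can instead compute $\rho^*\omega$ directly from $\omega=C\,dz/\partial_\lambda P$ and the explicit formula for $\rho$, reducing the sign determination to an algebraic identity on the spectral curve.
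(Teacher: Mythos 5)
Your arguments for parts (a) and (c) are correct but follow a genuinely different route from the paper. For (a), the paper observes that $\divis(z)=3P_0-3P_\infty$ and $\divis(\lambda)=3P_1-3P_\infty$ and invokes Abel's theorem to get $3\mathcal A(P_0)\equiv 3\mathcal A(P_\infty)\equiv 3\mathcal A(P_1)$, then sorts the three real values by their cyclic order on $\mathbf a$. You instead introduce the order-three automorphism $\rho=\sigma_1\circ\sigma_2:(z,\lambda)\mapsto(-\lambda/z,-1/z)$, which cyclically permutes $P_1\to P_0\to P_\infty$, and show that its induced action on $\mathbb C/L$ is $u\mapsto u+\tfrac13$; both arguments give the same arithmetic and then the same order-of-points check on the oval. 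For (c), the paper derives it as a corollary of (a) and (b) (the $\mathbf a$-- and $\mathbf b$--cycles intersect at $B_0$), while you give a direct, slightly more elementary argument: $B_0$ is a fixed point of $\sigma_2$, hence $\mathcal A(B_0)$ is $2$-torsion; it lies on the unbounded oval, so $\mathcal A(B_0)\in\{0,\tfrac12\}$; and $B_0\neq P_1$ rules out $\tfrac12$. That is a valid and clean alternative.

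There is, however, a genuine gap in your part (b). The step ``the image $\mathcal A(\Gamma_1\cup\Gamma_2)$ is a loop of $\mathbf b$-homology class on the torus, hence a translate of $[0,\tau]$'' is a non-sequitur: a simple closed curve in the homology class $\mathbf b$ need not be a straight vertical segment. What is actually needed is a rigidity argument forcing the image to lie on a vertical line, and the paper supplies it via the anti-holomorphic involution $(z,\lambda)\mapsto(1/\bar z,\bar\lambda/\bar z)$ --- your $\iota\circ\sigma_2$ --- which on the torus acts as $u\mapsto-\bar u$ and has fixed set exactly $\{\Re u\equiv 0\}\cup\{\Re u\equiv\tfrac12\}$, i.e.\ $[0,\tau]$ and $\tfrac12+[0,\tau]$. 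Since every point with $|z|=1$ is fixed by that involution, $\Gamma_1\cup\Gamma_2$ and $\Gamma_3$ each map into one of the two segments, and the known values $\mathcal A(B_0)\equiv 0$ (on $\Gamma_1$) and $\mathcal A(P_1)\equiv\tfrac12$ (on $\Gamma_3$) pin them down. You already identified $\sigma_2\leftrightarrow(u\mapsto -u)$ and $\iota\leftrightarrow(u\mapsto\bar u)$, so the fix is small, but as written the step is unjustified.

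Two smaller remarks. First, the aside ``generically, no non-translation order-three automorphism exists'' is avoidable here: the paper normalizes $\tau\in i\mathbb R^+$, so the lattice is rectangular and has no order-three rotational symmetry at all; better still, your own observation $\rho^*\omega=\alpha\omega$ with $\alpha=1$ already implies $\rho$ is a translation without any genericity discussion. Second, the logic around $\alpha$ is slightly tangled: if you first declare $\rho$ a translation, then $\rho^*\omega=\omega$ is automatic and the $\alpha$ computation is redundant; if instead you prove $\alpha=1$ first, the translation property follows. Either order works, but presenting both makes the argument look circular even though it is not.
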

\begin{proof}	
(a)
Since $\mathbf{a}$ is fixed under the anti-holomorphic involution \eqref{involutionanti}
the holomorphic differential $\omega$ will be of the form $\omega = v dz$
with a meromorphic $v$ that is real and positive on $\mathbf{a}$. Then
it follows from \eqref{AbelAdef} that $\mathcal A$
maps $\mathbf{a}$ to $[0,1]$ (modulo $L$) with $\mathcal A(P_1) \equiv \frac{1}{2}$.

$p = (z,\lambda) \mapsto z$ is a meromorphic function with
a triple zero at $P_0$ and a triple pole at $P_{\infty}$,
and no other zeros and poles, i.e., $\divis(z) = 3 P_0 - 3  P_{\infty}$.
Similarly $\divis(\lambda) = 3 P_1 - 3 P_{\infty}$.
 Abel's theorem then implies that $3 \mathcal A(P_0) \equiv 3 \mathcal A(P_{\infty})
\equiv 3 \mathcal A(P_1)$. We note that $\mathcal A(P_1), \mathcal A(P_0), \mathcal A(P_{\infty})$
are real (modulo $L$) and distinct with $\mathcal A(P_1) \equiv \frac{1}{2}$.
This leads to the values \eqref{Abelvalues}, since
$P_1, P_0, P_{\infty}$ are in that order on the $\mathbf{a}$ cycle.

\medskip
(b) The second holomorphic involution in  \eqref{involutions}  maps
$P_1$ to $P_1$, $P_0$ to $P_{\infty}$, and $P_{\infty}$ to $P_0$. 
On the complex torus it thus corresponds to $u \mapsto -u$ (modulo $L$).
The anti-holomorphic involution \eqref{involutionanti} corresponds
to $u \mapsto \overline{u}$ (modulo $L$).
The combination 
\begin{equation} \label{involutionanti2} 
	(z,\lambda) \mapsto \left(\tfrac{1}{\overline{z}}, \tfrac{\overline{\lambda}}{\overline{z}}\right) \end{equation}
 then corresponds
to $u \mapsto -\overline{u}$ (modulo $L$).
This involution has the two closed contours $[0,\tau]$ and $\frac{1}{2} + [0,\tau]$ on $\mathbb C \slash L$ as fixed points, 
while \eqref{involutionanti2} has every point on $\mathcal R$
with $|z| = 1$ as fixed point.
It follows that $\Gamma_1 \cup \Gamma_2$ and $\Gamma_3$
(which are both closed contours on $\mathcal R$) are
 mapped by the Abel map to $[0,\tau]$ and  $\frac{1}{2} + [0,\tau]$. 
 Since $P_1$ is on $\Gamma_3$ and $\mathcal A(P_1) \equiv \frac{1}{2}$,
 we conclude that $\Gamma_3$ is mapped to $\frac{1}{2} + [0,\tau]$.
 Then $\Gamma_1 \cup \Gamma_2$ is mapped to $[0,\tau]$.

\medskip
(c) This follows from parts (a) and (b), as the $\mathbf{a}$ and $\mathbf{b}$ cycles intersect at $B_0$.
\end{proof}

\section{Proof of Theorem \ref{theorem14} (a)} \label{section3}

\begin{proof}
	Let $\mu$ be given by \eqref{muBal}. We have to show that $\mu$ is a
	positive measure with a real analytic and positive density. 
	Because of Proposition \ref{prop24}
	and conformal invariance of balayage measures we have that the
	push-forward of $\mu$ by the Abel map is given by
	\begin{equation} \label{Amu} 
		\mathcal A_*(\mu) =  \Bal\left(\delta_{1/6} - \delta_{1/2} + \delta_{5/6}; \mathcal{A}(\Gamma_1 \cup \Gamma_2)\right) \end{equation}
	with balayage inside $\mathbb C \slash L$, and we have to show
	that \eqref{Amu} has a positive real analytic density.
	
	Let $h$ be a continuous function on $\mathcal R$ that is harmonic
	on $\mathcal R \setminus (\Gamma_1 \cup \Gamma_2)$. Then
	$h \circ \mathcal A^{-1}$ is continuous on 
	 $\mathbb C \slash L$ and 
	harmonic on $(\mathbb C \slash L) \setminus \mathcal A(\Gamma_1 \cup \Gamma_2)$.
	Consider $h \circ \mathcal A^{-1}$ as defined on $\{ u \in \mathbb C \mid 0 \leq \Re u \leq 1,
	0 \leq \Im u \leq \Im \tau \}$.
	Let 
	\[ \widetilde{\Omega} = \{ u \in \mathbb C \mid 0 < \Re u < 1 \} \]
	be the unbounded vertical strip. 
	Then $h \circ \mathcal A^{-1}$ uniquely extends to a continuous function $\widetilde{h}$ on
	the closure of $\widetilde{\Omega}$ with the periodicity
	property $\widetilde{h}(u + \tau) = \widetilde{h}(u)$ for $u \in \widetilde{\Omega}$. 
	Then $\widetilde{h}$ is harmonic in $\widetilde{\Omega}$
	and
	\begin{equation} \label{htildeint} \int h d \mu   
	= \int \widetilde{h} d\widetilde{\mu} \end{equation}
	where
	\begin{equation} \label{mutilde} \widetilde{\mu} = 
	\Bal\left(\delta_{1/6} - \delta_{1/2} + \delta_{5/6}; \partial \widetilde{\Omega} \right). \end{equation}
	Because of \eqref{htildeint} and \eqref{mutilde} 
	it suffices to show that \eqref{mutilde} is a positive measure.
	
	We do this by mapping $\widetilde{\Omega}$ to the upper half plane $\mathbb C^+$
	by means of the conformal map $u \mapsto e^{\pi i u}$. 
	By conformal invariance, we have to show that
	\begin{equation} \label{muBalonR}
		 \Bal\left(\delta_{e^{\pi i/6}} - \delta_{e^{\pi i/2}} + \delta_{e^{5 \pi i/6}} ; \mathbb R\right) 
	\end{equation}	is a positive measure on the real line,
	 with a positive real analytic density.
	This can be done by the following explicit calculation.
	For  $z = a + ib \in \mathbb C^+$
	the balayage of $\delta_z$ onto the real line, is known to
	be the Cauchy measure $\frac{1}{\pi} \frac{b}{b^2 + (x-a)^2} dx$.
	Thus 
	\begin{align*} 
		\Bal \left( \delta_{e^{\pi i/6}}; \mathbb R \right)
			& = \frac{dx}{2 \pi \left(1+x^2- \sqrt{3}x\right)}, \\
		\Bal\left( \delta_{e^{\pi i/2}}; \mathbb R\right) & = \frac{dx}{\pi (1+x^2)}, \\
			\Bal \left( \delta_{e^{5 \pi i/6}}; \mathbb R \right)
		& = \frac{dx}{2 \pi \left(1+x^2 + \sqrt{3}x\right)}, 
		\end{align*} 
	and from these formulas it is straightforward to verify that 	
	\eqref{muBalonR} is indeed a positive measure with a density 
	that is real analytic and positive on $\mathbb R$.
	This completes the proof of Theorem \ref{theorem14} (a).
\end{proof}

\section{Proof of Theorem \ref{theorem17}}

\subsection{Preliminaries on the bipolar Green's kernel}

The bipolar Green's kernel $G_{P_{\infty}}$
has the properties listed in Definition \ref{definition16}
and it is determined up to an additive constant.
In the proof of Theorem \ref{theorem17} it will be convenient
to choose the constant such that the following holds.

\begin{lemma} \label{lemma41}
	There is a choice of additive constant for the bipolar
	Green's kernel such that
	\begin{align} \label{bGfatPinfty2}  G_{P_{\infty}}(p,q) = - \frac{1}{3} \log |z(p)| + o(1) \quad \text{ as } p \to P_{\infty}. 
	\end{align}
\end{lemma}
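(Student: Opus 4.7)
The plan is to exploit the uniformization of $\mathcal R$ as a complex torus together with the explicit formula \eqref{G0uv}. Since $\mathcal R$ has genus one by Assumption \ref{assump12}(d), the Abel map gives a biholomorphism $\mathcal A : \mathcal R \to \mathbb C/L$ with $L = \mathbb Z + \tau\mathbb Z$ and $\Im\tau > 0$. After composing with a translation I may assume without loss of generality that $\mathcal A(P_\infty) \equiv 0$. Up to the single additive constant $C$ left free by Definition \ref{definition16}, the bipolar Green's kernel with pole at $P_\infty$ must then coincide with $G_0$ from \eqref{G0uv}:
\[
G_{P_\infty}(p,q) = \log\left|\frac{\theta_1(u)\,\theta_1(v)}{\theta_1(u-v)}\right| - \frac{2\pi}{\Im\tau}(\Im u)(\Im v) + C,
\]
where $u=\mathcal A(p)$ and $v=\mathcal A(q)$, since the right-hand side satisfies (a)--(d). (One checks this directly from the quasiperiodicity \eqref{theta1period} and the fact that $\theta_1$ has simple zeros exactly on $L$.)

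The key step is the asymptotic analysis as $p \to P_\infty$, i.e.\ $u \to 0$, with $q$ fixed away from $P_\infty$. Using $\theta_1(u) = \theta_1'(0)\,u + O(u^3)$ together with $\theta_1(u-v) = \theta_1(-v) + O(u)$ and the oddness $|\theta_1(-v)| = |\theta_1(v)|$, the contributions $\log|\theta_1(v)|$ and $-\log|\theta_1(u-v)|$ cancel to leading order, yielding
\[
G_{P_\infty}(p,q) = \log|u| + \log|\theta_1'(0)| + C + o(1), \qquad u \to 0,
\]
with $o(1)$ uniform for $q$ in compact subsets of $\mathcal R\setminus\{P_\infty\}$. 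Thus the $q$-dependence drops out at leading order, which is what makes the uniform choice of additive constant possible.

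It remains to convert $\log|u|$ into $-\tfrac{1}{3}\log|z(p)|$. By Assumption \ref{assump12}(b), $P_\infty$ is a triple branch point of $\mathcal R$, so the meromorphic function $z$ has a pole of order exactly three at $P_\infty$. In the local coordinate $u$ near $0\in\mathbb C/L$ this gives $z = a\,u^{-3}\bigl(1+O(u)\bigr)$ for some nonzero constant $a$, hence $\log|u| = -\tfrac{1}{3}\log|z(p)| + \tfrac{1}{3}\log|a| + o(1)$. Combining with the previous display and fixing
\[
C := -\tfrac{1}{3}\log|a| - \log|\theta_1'(0)|
\]
produces \eqref{bGfatPinfty2}.

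I do not anticipate any substantive obstacle. The only delicate point is the uniform cancellation of the $v$-dependence at leading order, which however is an immediate consequence of the oddness of $\theta_1$ and the smoothness of $\theta_1(u-v)$ in $u$ at $u=0$ for $v$ in any compact set avoiding the lattice $L$. A purely intrinsic proof (without invoking the explicit theta-function representation) is also available: symmetry (d) combined with property (c) shows that for fixed $q$ the function $p\mapsto G_{P_\infty}(p,q) + \tfrac{1}{3}\log|z(p)|$ extends harmonically through $P_\infty$, and a second application of symmetry shows its value at $P_\infty$ is independent of $q$, after which one absorbs that value into the additive constant.
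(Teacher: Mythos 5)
Your proof is correct but proceeds by a genuinely different route than the paper's. The paper argues intrinsically: from property (c) of Definition \ref{definition16} and the local coordinate $z^{-1/3}$ at $P_\infty$, it sets $c(q) = \lim_{p\to P_\infty}\bigl[\widehat{G}_{P_\infty}(p,q) + \tfrac13\log|z(p)|\bigr]$, uses symmetry (d) and property (a) to conclude that $q\mapsto c(q)$ is harmonic on $\mathcal R\setminus\{P_\infty\}$, shows the singularity of $c$ at $P_\infty$ is removable by a cancellation-of-singularities argument, and invokes compactness of $\mathcal R$ to deduce that $c$ is constant. You instead uniformize $\mathcal R$ to the complex torus via the Abel map, use the explicit kernel \eqref{G0uv}, expand $\theta_1$ at the origin to see the $q$-dependence drop out at leading order, and convert $\log|u|$ to $-\tfrac13\log|z(p)|$ via the order-three pole of $z$ at $P_\infty$. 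Both arguments are sound; yours pins down the value of the normalizing constant explicitly, at the cost of relying on the genus-one theta representation, whereas the paper's intrinsic argument uses only the axioms (a)--(d) and would carry over verbatim to higher genus. Your closing sketch of an ``intrinsic alternative'' is essentially the paper's own proof, though the phrase ``a second application of symmetry shows its value at $P_\infty$ is independent of $q$'' compresses the real point: what one actually shows is that $c$ is harmonic on $\mathcal R\setminus\{P_\infty\}$ and bounded near $P_\infty$, hence extends harmonically to the whole compact surface, and therefore must be constant.
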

\begin{proof} Let $\widehat{G}_{P_{\infty}}(p,q)$ be
	any bipolar Green's kernel.
	From \eqref{bGfatPinfty} it follows that for each $q \in \mathcal R \setminus \{ P_{\infty} \}$ there is a constant $c(q)$ such that
	\[ \widehat{G}_{P_{\infty}}(p,q) = - \frac{1}{3} \log |z(p)| + c(q) + o(1),
	\quad \text{ as } p \to P_{\infty}, \]
	since $z(p)^{-1/3}$ is a local coordinate at $P_{\infty}$.
	Thus
	\[ c(q) = \lim_{p \to P_{\infty}}
	\left[ \widehat{G}_{P_{\infty}}(p,q) + \frac{1}{3} \log |z(p)| \right]. 
	\]
	Since $q \to \widehat{G}_{P_{\infty}} (p,q)$ is harmonic
	in $\mathcal R \setminus \{ P_{\infty}, p\}$
	(due to items (a) and (d) in Definition \ref{definition16})
	it follows that $q \to c(q)$ is harmonic
	in $\mathcal R \setminus \{P_{\infty}\}$. 
	
	For $p,q$  in the local coordinate chart around $P_{\infty}$
	we also have that
	\[ H(p,q) = \widehat{G}_{P_{\infty}}(p,q) - \log |z(p)^{-1/3}-z(q)^{-1/3}| \]
	remains bounded, 	as all singularities cancel out.
	Since $c(q) = H(P_{\infty}, q)$, we then have that
	$c(q)$ remains bounded as $q \to P_{\infty}$. Therefore
	$q \mapsto c(q)$ extends to a harmonic function on $\mathcal R$.
	The only harmonic functions on $\mathcal R$ are constants,
	since $\mathcal R$ is compact, hence $c(q) = c_0$ for some $c_0$. and $\widehat{G}_{P_{\infty}}(p,q) - c_0$ is a bipolar Green's kernel
	with the additional property \eqref{bGfatPinfty2}.
\end{proof}

\subsection{Proof of equality on $\Gamma_1 \cup \Gamma_2$}
\label{subsec321}

We first prove the equality in \eqref{muELvarcon} on $\Gamma_1 \cup \Gamma_2$.

\begin{proof}
In the proof, we use the shorter notation 
\[ \widehat{\delta}_{a} = \Bal(\delta_a; \Gamma_1 \cup \Gamma_2) \] 
to denote the balayage of the Dirac point mass at $a \in \mathcal R$ onto $\Gamma_1 \cup \Gamma_2$.

Let $\Omega = \mathcal R \setminus (\Gamma_1 \cup \Gamma_2)$ and
let $G_{\Omega}$ be the usual Green's kernel for $\Omega$. That is, for
each $p \in \Omega$, $q \mapsto G_{\Omega}(p,q)$ is harmonic in
$\Omega \setminus \{p\}$, continuous on $\mathcal R \setminus \{p\}$,
zero on $\partial \Omega$, and (in any local coordinate)
\begin{equation} \label{GreenOmega} 
	G_{\Omega}(p,q) = - \log |p-q| + \mathcal{O}(1), \text{ as } q \to p. 
\end{equation}
Then, by the properties \eqref{bGfatq}, \eqref{bGfatPinfty},
and \eqref{GreenOmega}, one finds 
that for any given $p \in \Omega \setminus \{P_{\infty}\}$,
the function
\[ q \mapsto G_{P_{\infty}}(p,q) - G_{\Omega}(p,q) + G_{\Omega}(P_{\infty}, q) \]
is continuous on $\mathcal R$ and harmonic on $\Omega$.
Hence by the property \eqref{deltaBal}	 of balayage measure
\begin{multline*} \int \left(G_{P_{\infty}}(p,q) - G_{\Omega}(p,q) + G_{\Omega}(P_\infty, q) \right)  d \widehat{\delta}_{a}(q) \\
= G_{P_{\infty}}(p,a) - G_{\Omega}(p,a) + G_{\Omega}(P_\infty, a), \end{multline*}
for any $a \in \mathcal R \setminus ( \Gamma_1 \cup \Gamma_2 \cup \{P_\infty, p\})$.
Since $G_{\Omega}(p,q) = 0$ for $q \in \Gamma_1 \cup \Gamma_2$
and $\widehat{\delta}_{a}$ is supported on $\Gamma_1 \cup \Gamma_2$,
we find
\[ \int G_{P_{\infty}}(p,q) d \widehat{\delta}_{a}(q)  = G_{P_\infty}(p,a) - G_{\Omega}(p,a) + G_{\Omega}(P_\infty, a).  \] 
For $p \in \partial \Omega
= \Gamma_1 \cup \Gamma_2$, this
reduces to
\begin{equation} \label{Gbalhulp} \int G_{P_{\infty}}(p,q) d \widehat{\delta}_{a}(q)  = G_{P_{\infty}}(p,a)  + G_{\Omega}(P_\infty, a),
	\quad \text{ for } p \in \Gamma_1 \cup \Gamma_2.  \end{equation}

The function $\lambda$ is a meromorphic function on $\mathcal R$
with divisor $\divis(\lambda) = 3 P_1 - 3 P_{\infty}$, i.e., there is a triple zero
at $P_1$, a triple pole at $P_{\infty}$ and no other poles or zeros. 
Then $p \mapsto \log | \lambda(p)|$ is harmonic in $\mathcal R \setminus \{P_{\infty}, P_1\}$ with
$\log |\lambda(p)| = 3 \log |z(p)+1| + \mathcal{O}(1)$ as $p \to P_1$
and $\log |\lambda(p)| = - 3 \log |z(p)^{-1/3}| + o(1)$
as $p \to P_{\infty}$, (see also \eqref{lambda1asymp}, \eqref{lambda2asymp}, \eqref{lambda3asymp}), 
where $z(p)+1$ is a local coordinate at $P_1$
and $z(p)^{-1/3}$ is a local coordinate at $P_{\infty}$.
Then it is easy to see that
\[
	G_{P_{\infty}}(p, P_1) = - \frac{1}{3} \log |\lambda(p)|, \quad p \in \mathcal R,
\]	
Taking $a = P_1$ in \eqref{Gbalhulp} we find
\begin{equation} \label{GbalP1}
	\int G_{P_{\infty}}(p,q) d \widehat{\delta}_{P_1}(q)
	 = - \frac{1}{3} \log |\lambda(p)| 
	+ G_{\Omega}(P_{\infty}, P_1), 
	\quad \text{ for } p \in \Gamma_1 \cup \Gamma_2. 
\end{equation}
For $P_0$, we similarly have $G_{P_{\infty}}(p, P_0) = - \frac{1}{3} \log |z(p)|$, and 
\begin{equation} \label{GbalP0}
	\int G_{P_{\infty}}(p,q) d \widehat{\delta}_{P_0}(q)= - \frac{1}{3} \log |z(p)| 
	+ G_{\Omega}(P_{\infty}, P_0), 
	\quad \text{ for } p \in \Gamma_1 \cup \Gamma_2. 
\end{equation}

Next, we want to let $a \to P_{\infty}$ in \eqref{Gbalhulp}.
The logarithmic singularities that the two terms in the
right-hand side have at $P_{\infty}$ cancel out, and the limit exists,
but the limit may depend on $p$.  
To show that the limit does not depend on $p$,
we choose a reference point, say $B_0 \in \Gamma_1 \cup \Gamma_2$,
and we observe that
\[ a \mapsto G_{P_{\infty}}(p,a) - G_{P_{\infty}}(B_0,a) \]
is harmonic on $\mathcal R \setminus \{p,B_0\}$ with logarithmic
singularities at $p$ and $B_0$. It is equal to $G_{B_0}(p,a)$ up to an additive
constant that may depend on $p$ and $B_0$. We write the constant
as $-G_{B_0}(P_\infty,p) + \ell(p,B_0)$ to obtain
\begin{equation} \label{GbalPinf1} 
	G_{P_{\infty}}(p,a) - G_{P_{\infty}}(B_0,a)
	= G_{B_0}(p,a) - G_{B_0}(P_\infty,p) + \ell(p,B_0).  \end{equation}
The function 
\[ p \mapsto G_{P_{\infty}}(p,a)  - G_{B_0}(p,a) + G_{B_0}(P_{\infty},p) \]
is harmonic in $\mathcal R \setminus \{B_0,P_{\infty},a\}$, but it
is easy to check that all logarithmic singularities cancel out. Hence
it is a constant, independent of $p$. Using this in \eqref{GbalPinf1}, we see
that the constant $\ell(p,B_0)$ does not depend on $p$, and we may write
\begin{equation} \label{GbalPinf2} 
	G_{P_{\infty}}(p,a) - G_{P_{\infty}}(B_0,a)
	= G_{B_0}(p,a) - G_{B_0}(P_\infty,p) + \ell(B_0),  \end{equation}
with a constant $\ell(B_0)$ that may depend on $B_0$, but is independent
of $a$ and $p$.
Thus in view of \eqref{Gbalhulp} we have
\[ 	\int G_{P_{\infty}}(p,q) d \widehat{\delta}_{a}(q)
	= 	\int G_{P_{\infty}}(B_0,q) d \widehat{\delta}_{a}(q)
		+ G_{B_0}(p,a) - G_{B_0}(P_\infty,p) + \ell(B_0)
	 \]
for $p \in \Gamma_1 \cup \Gamma_2$. In the limit $a \to P_{\infty}$ we get
\begin{equation} \label{GbalPinfty}
	\int G_{P_{\infty}}(p,q) d \widehat{\delta}_{P_\infty}(q)
	= \ell_{\infty}, \quad \text{ for } p \in \Gamma_1 \cup \Gamma_2. 
\end{equation}
with constant $\ell_{\infty} = 	\int G_{P_{\infty}}(B_0,q) d \widehat{\delta}_{P_\infty}(q)  + \ell(B_0)$
that does not depend on $p$.

Combining \eqref{GbalPinfty}, \eqref{GbalP1}, \eqref{GbalP0}, 
and recalling the definition \eqref{Gmudef} we obtain
\begin{align*} U^{\mu}(p) & =  3 \int G_{P_{\infty}}(p,q) d \left( \widehat{\delta}_{P_{\infty}} - \widehat{\delta}_{P_1} + \widehat{\delta}_{P_0} \right)(q) \\
 	& =  \log |\lambda(p)| - \log |z(p)| + \ell/2,
 	\quad \text{ for } p \in \Gamma_1 \cup \Gamma_2,  \end{align*}
 with yet another constant $\ell = 6 \ell_\infty 
 -6 G_{\Omega}(P_{\infty}, P_1) + 6G_{\Omega}(P_{\infty}, P_0)$.
 This gives the equality \eqref{muELvarcon} on $\Gamma_1 \cup \Gamma_2$ since $\Re V = 2(\log |z| - \log|\lambda|)$ by \eqref{Vdef}.
\end{proof}

\subsection{Proof of strict inequality on $\Gamma_3$}
\label{subsec322}
Strict inequality in \eqref{muELvarcon} holds on $\Gamma_3$,
as follows from the following stronger statement.

\begin{lemma} \label{lemma42}
	Let 
	\begin{equation} \label{hdef} 
		h =  2 U^{\mu} + \Re V - \ell. \end{equation} 
	For a given $p \in \mathcal R$, we let $u \equiv \mathcal A(p)$
	with $u \in [0,1] \times [0,\tau)$. Then the following hold
		\begin{enumerate}
		\item[\rm (a)] $h(p) = 0$ if and only if $\Re u
			\in \{0, \frac{1}{3}, \frac{2}{3}\}$,
	\item[\rm (b)] 	$h(p) < 0$ if and only if $0 < \Re u < \frac{1}{3}$
	or $\frac{2}{3} < \Re u < 1$,
	\item[\rm (c)] $h(p) > 0$ if and only if $\frac{1}{3} < \Re u < \frac{2}{3}$.
	\end{enumerate}
\end{lemma}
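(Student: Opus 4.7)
My plan is to pull $h$ back to the complex torus via the Abel map, and then obtain an explicit closed form by a further conformal map to the upper half plane.

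First I would identify the defining properties of $\tilde h := h \circ \mathcal A^{-1}$. Combining $h|_{\Gamma_1 \cup \Gamma_2} = 0$ from section 4.2 with a direct local expansion of $\Re V = 2\log|z| - 2\log|\lambda|$ at $P_\infty,P_1,P_0$ (using the divisors $(z) = 3P_0 - 3P_\infty$, $(\lambda) = 3P_1 - 3P_\infty$ together with the conformality of $\mathcal A$), the function $\tilde h$ is characterized as the unique harmonic function on the cylinder $\{0 < \Re u < 1\}/\Im\tau\,\mathbb{Z}$ that vanishes on $\{\Re u = 0\}$ and carries logarithmic singularities $6\log|u-1/6|$, $-6\log|u-1/2|$, $6\log|u-5/6|$ at the three Abel-map images $\mathcal A(P_\infty),\mathcal A(P_1),\mathcal A(P_0) \equiv 1/6,\,1/2,\,5/6$ from Proposition~\ref{prop24}.

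Next I would apply the conformal map $\phi(u) = e^{i\pi u}$ sending the strip onto the upper half plane; the $\Im$-periodicity becomes multiplication by $q := e^{-\pi\Im\tau} \in (0,1)$, and the three singular points are sent to $e^{i\pi/6},\,i,\,e^{5i\pi/6}$, which are exactly the three sixth roots of $-1$ in the upper half plane. This coincidence, which is forced by Assumptions~\ref{assump12}(a)--(c) via Proposition~\ref{prop24}, yields the crucial algebraic identities
\[
(w-e^{i\pi/6})(w+i)(w-e^{5i\pi/6}) = w^3-i, \qquad (w-e^{-i\pi/6})(w-i)(w-e^{-5i\pi/6}) = w^3+i,
\]
with $q^n$-scaled analogues giving $w^3 \mp iq^{3n}$. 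Summing the upper half-plane Green's function $G^{\mathrm{UHP}}(w,a) = -\tfrac{1}{2\pi}\log|(w-a)/(w-\bar a)|$ over the images (the $n$-sum converges absolutely since $q<1$) then collapses to the closed form
\[
\tilde h(u) = -6 \sum_{n \in \mathbb{Z}} \log\left|\frac{w^3 + iq^{3n}}{w^3 - iq^{3n}}\right|, \qquad w = e^{i\pi u}.
\]

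The sign analysis is then immediate. By the identity $|A+B|^2 - |A-B|^2 = 4\Re(A\bar B)$, every summand has the sign of $\Im(w^3)$, so $\tilde h$ has the opposite sign. Since $\Im(w^3) = e^{-3\pi\Im u}\sin(3\pi\Re u)$, the sign of $\tilde h$ equals that of $-\sin(3\pi\Re u)$, and the three cases of the lemma drop out at once from the sign pattern of $\sin(3\pi x)$ on $[0,1]$: zero at $\{0,1/3,2/3\}$, positive on $(0,1/3)\cup(2/3,1)$, and negative on $(1/3,2/3)$. The main technical input is the algebraic factorization $w^6 + q^{6n} = (w^3-iq^{3n})(w^3+iq^{3n})$—everything else is routine potential-theoretic bookkeeping, and the proof relies decisively on the specific arrangement $\mathcal A(P_\infty),\mathcal A(P_1),\mathcal A(P_0) = 1/6,1/2,5/6$ dictated by Proposition~\ref{prop24}.
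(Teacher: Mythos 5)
Your proof is correct but follows a genuinely different route from the paper. The paper characterizes the pullback $\tilde h$ on $\mathbb C/L$ by harmonicity, vanishing on $[0,\tau]$, and the log singularities \eqref{hlogsing} at $1/6,\,1/2,\,5/6$, exactly as you do; but rather than producing an explicit formula, it introduces two auxiliary harmonic functions $h_1$ and $h_2$ on the overlapping half-open cylinders $\{0<\Re u<\tfrac23\}$ and $\{\tfrac13<\Re u<1\}$, each carrying two of the three singularities with zero Dirichlet data. The reflection symmetries $u\mapsto \tfrac23-\overline u$ and $u\mapsto \tfrac43-\overline u$ (which swap the pairs $\{1/6,1/2\}$ and $\{1/2,5/6\}$) force $h_1=0$ on $\Re u=\tfrac13$ and $h_2=0$ on $\Re u=\tfrac23$, after which $h_1$ and $h_2$ glue to a function satisfying all characterizing properties of $h$, and the sign is read off by the minimum principle. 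You instead build $\tilde h$ as the method-of-images sum of upper-half-plane Green's functions over the deck group $w\mapsto q^nw$, and the arithmetic accident that $e^{\pi i/6},\,i,\,e^{5\pi i/6}$ are the three sixth roots of $-1$ in $\mathbb C^+$ collapses each $n$-block into $\log\bigl|(w^3-iq^{3n})/(w^3+iq^{3n})\bigr|$; the sign then follows termwise from $|A+B|^2-|A-B|^2=4\Re(A\overline B)$. Both proofs hinge on the specific arrangement $\mathcal A(P_\infty),\mathcal A(P_1),\mathcal A(P_0)=\tfrac16,\tfrac12,\tfrac56$; your version trades the softness of the minimum principle for an explicit closed form (which, incidentally, could also be read off from the $\theta_1$-representation \eqref{G0uv}). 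One small correction to your first paragraph: the singularity $6\log|u-\tfrac16|$ at $P_\infty$ does \emph{not} come from $\Re V$ — since $\divis(z)=\divis(\lambda)$ at $P_\infty$ the logarithms in $\Re V=2\log|z|-2\log|\lambda|$ cancel there — it comes from the term $2U^\mu$, as $G_{P_\infty}(\,\cdot\,,q)$ has a log pole at $P_\infty$ for every $q$ and $\mu$ is a probability measure. The coefficients in \eqref{hlogsing} that you use are nevertheless the correct ones, so the rest of your argument stands.
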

\begin{proof}
  The function \eqref{hdef} is harmonic on $\mathcal R \setminus ( \Gamma_1 \cup \Gamma_2 \cup 
 \{P_0, P_1, P_{\infty} \})$. 
 We transfer it to the complex torus by means of the Abel map, and by  slight abuse
 of notation, we call that function $h$ as well. 
 Then $h$ is zero on $[0,\tau]$ modulo $L$,  
 by the equality part of \eqref{muELvarcon} in
 Theorem \ref{theorem17} that we just proved. 
 Also $h$
 is harmonic on $(\mathbb C \slash L ) \setminus [0,\tau]$
 except at $u= \frac{1}{6}, \frac{1}{2}, \frac{5}{6}$ (modulo $L$), 
 as these are the images of $P_{\infty}$,
 $P_1$ and $P_0$, see Figure \ref{fig:Abelmap}. At these special
 points there are logarithmic singularities
 \begin{align} \label{hlogsing}
 	h(u)  = \begin{cases} 6 \log |u-\frac{1}{6}| + O(1),  & \text{as } u \to \frac{1}{6}, \\
 	- 6 \log|u-\frac{1}{2}| + O(1), &   \text{as } u \to \frac{1}{2}, \\
 	6\log|u-\frac{5}{6}| + O(1), &   \text{as } u \to \frac{5}{6}. \end{cases}
 \end{align}
 The behaviors at $\frac{1}{2}$ and $\frac{5}{6}$ comes from the term $\Re V = 2(\log |z| - \log |\lambda|)$ in \eqref{hdef}. For example, $z^{1/3}$ and $u-\frac{5}{6}$
 are both local coordinates  at $z=0$ on $\mathcal R$, and $z^{1/3} \approx C(u-\frac{5}{6})$ for some non-zero constant $C$ as $u \to \frac{5}{6}$
 and this leads to the leading order behavior as $u \to \frac{5}{6}$
 as given in \eqref{hlogsing}. The behavior at $\frac{1}{6}$ comes from the term
 $-2U^{\mu}$, as this is $6 \int G_{P_{\infty}}(p,q) d\mu(q)$ by \eqref{Gmudef} and
 each $G_{P_{\infty}}(p,q)$ with $q \in \Gamma_1 \cup \Gamma_2$ 
 behaves like $\log |u-\frac{1}{6}|$ as $p \to P_{\infty}$ (i.e., $u \to \frac{1}{6}$) by the defining property 
 \eqref{bGfatPinfty} of the bipolar Green's function, while $\mu$ is a probability measure.
    
 The above properties characterize $h$. 
 
 Now we introduce two auxiliary harmonic functions.  
 
 Take $\Omega_1 = \{ u \in \mathbb C \slash L : 0 <  \Re u < \frac{2}{3} \}$,
 and let $h_1$ be harmonic on $\Omega_1$, except 
 for logaritmic singularities at $u=\frac{1}{6}$ and $u= \frac{1}{2}$
 with the same behaviors as in \eqref{hlogsing}
 \begin{align} \label{h1logsing}
 	h_1(u) = \begin{cases} 6\log |u- \frac{1}{6}| + O(1), & \text{as } u \to \frac{1}{6}, \\
  		- 6\log|u-\frac{1}{2}| + O(1), &   \text{as } u \to \frac{1}{2}.
  		\end{cases}
 \end{align}
 Also $h_1$ extends continuously to $\partial \Omega_1$ with $h_1(u) = 0$
 for $u \in \partial \Omega_1$. Such $h_1$ uniquely exists, and it is
 in fact the following combination of Green's functions
 \[ h_1(u) = 6 G_{\Omega_1}(u,\tfrac{1}{2}) - 6 G_{\Omega_1}(u, \tfrac{1}{6}). \]
  Because of symmetry, the function $u \mapsto -h_1(\frac{2}{3}-\overline{u})$ satisfies the
  same properties as $h_1$, and thus $h_1(u) = -h_1(\frac{2}{3} -\overline{u})$
  by uniqueness. In particular $h_1(u) = 0$ for $\Re u = \frac{1}{3}$.
  
 Similarly, we take $\Omega_2 = \{ u \in \mathbb C \slash L \mid \frac{1}{3} < \Re u < 1\}$ and we let $h_2$ be harmonic on $\Omega_2$, except for logaritmic singularities at $u= \frac{1}{2}$, $u= \frac{5}{6}$ where we have 
 \begin{align} \label{h2logsing}
 	h_2(u)  = \begin{cases} -6\log |u-\frac{1}{2}| + O(1),  & \text{as } u \to \frac{1}{2}, \\
 		6\log|u-\frac{5}{6}| + O(1), &   \text{as } u \to \frac{5}{6}.
 	\end{cases}
 \end{align}
 We also want $h_2(u) = 0$ for $u \in \partial \Omega_2$.
 Then as above we have,  
  \[ h_2(u) = 6 G_{\Omega_2}(u,\tfrac{1}{2}) - 6 G_{\Omega_2}(u, \tfrac{5}{6}), \]
 and $h_2(u) = 0$ for $\Re u = \frac{2}{3}$.
 
 Then both $h_1$ and $h_2$ are zero on $\partial (\Omega_1 \cap \Omega_2)$
 and  harmonic on $\Omega_1 \cap \Omega_2$,
 except at $\frac{1}{2}$ with a logarithmic singularity as in \eqref{h1logsing}
 and \eqref{h2logsing}. Thus 
 \[ h_1(u) = h_2(u) = G_{\Omega_1 \cap \Omega_2}(u,\tfrac{1}{2}),
 	\quad \text{for } u \in \Omega_1 \cap \Omega_2. \]
 Then it is easy to see that	
 \begin{equation} \label{h1h2} 
 	u \mapsto \begin{cases} h_1(u), &  0 \leq \Re u < \frac{2}{3}, \\
 	h_2(u), & \frac{1}{3} < \Re u \leq 1,
 \end{cases} \end{equation}
 is well-defined and harmonic on $(\mathbb C \slash L)\setminus 
 ([0,\tau] \cup \{ \frac{1}{6}, \frac{1}{2}, \frac{5}{6}\})$, 
 with logarithmic singularities as in \eqref{h1logsing}
 and \eqref{h2logsing}, and zero boundary conditions.
 Thus it satisfies all the requirements for $h$.
 By uniqueness we conclude that $h$ is given by \eqref{h1h2}. 
 
 Thus $h = 0$ for $\Re u = \frac{1}{3}$ and $\Re u = \frac{2}{3}$
 and it is harmonic for $\frac{1}{3} < \Re u < \frac{2}{3}$,
except for one singularity at $\frac{1}{2}$, with the behavior as in
\eqref{hlogsing}, so that $h$ tends to $+\infty$ as $u \to \frac{1}{2}$.
Then by the minimum principle for harmonic functions, we have that $h(u) > 0$
for $\frac{1}{3} < \Re u < \frac{2}{3}$.

In a similar way it follows that $h(u) < 0$ for $0 < \Re u < \frac{1}{3}$
and for $\frac{2}{3} < \Re u < 1$, and the lemma follows.  
 \end{proof}

 It is now easy to establish the strict inequality in \eqref{muELvarcon} on 
 $\Gamma_3$.
 \begin{proof}[Proof of strict inequality on $\Gamma_3$]
 	If $p \in \Gamma_3$, then $u = \mathcal A(p)$ is such that $\Re u = \frac{1}{2}$  by part (b) of Proposition \ref{prop24}.  
 	Part (c) of Lemma \ref{lemma42}
 	then tells us that $h(p) > 0$. In view of the definition \eqref{hdef} 
 	of $h$, we find the strict inequality in \eqref{muELvarcon}.
 \end{proof}
 
 \subsection{Proof of the $S$-property}
 
 The proof of the identity \eqref{GreenSprop} is straightforward. 

\begin{proof}[Proof of \eqref{GreenSprop}]
We consider again $h$ as in \eqref{hdef}, and as in the proof of Lemma \ref{lemma42}, we view it on the complex torus $\mathbb C \slash L$.
Then $h$ is characterized by the properties
that it is harmonic on $(\mathbb C \slash L) \setminus ([0,\tau] \cup \{ \frac{1}{6}, \frac{1}{2}, \frac{5}{6}\})$ with zero boundary values
on $[0,\tau]$ (modulo $L$) and logarithmic behavior given by \eqref{hlogsing}
at the singularities.

The function $u \mapsto h(1-\overline{u})$ (modulo $L$) has the same properties,
and thus $h(u) = h(1-\overline{u})$ by uniqueness. Then it follows indeed that the two normal derivatives
	\begin{align*} \frac{\partial}{\partial n_+} \left( 2 U^{\mu} + \Re V \right)
	 & =  -\lim_{x \to 1-}  \frac{\partial}{\partial x} h(x+ iy) \\
	 \frac{\partial}{\partial n_-} \left(2 U^\mu + \Re V \right)
	 &	= \lim_{x \to 0+} \frac{\partial}{\partial x} h(x+iy) 
		\end{align*}
agree for every $y \in [0, \Im \tau]$. This proves \eqref{GreenSprop}.
\end{proof}

\section{Eigenvectors and the first transformation}
The matrix with eigenvectors of $W(z)$ is used in the first
transformation of the RH problem. We first study the eigenvectors.
Recall that $W(z)$ is given by \eqref{Wz} and under the Assumptions \ref{assump12}
it takes the form \eqref{Wzstructure}.

\subsection{Eigenvectors of $W(z)$}

We use the spectral decomposition $W(z) = E(z) \Lambda(z) E(z)^{-1}$
as in \eqref{Wdecomp}
with a matrix  $E(z)$ whose columns are the eigenvectors of $W(z)$.
We choose the eigenvectors as follows.  

\begin{definition} \label{def:E}
Let $\vec{e}(z;\lambda)$ be the third column of the
adjugate matrix $\adj(\lambda I_3 - W(z))$. 
Then we define
\begin{equation} \label{Edef} E(z) = \begin{pmatrix} \vec{e}(z; \lambda_1(z)) & 
	\vec{e}(z;\lambda_2(z)) & \vec{e}(z;\lambda_3(z)) \end{pmatrix}. 
\end{equation} 
\end{definition}

More explicitly, we have the following. Since $W$ has the special structure \eqref{Wzstructure}, we find that the third column of 
$\adj(\lambda I_3 - W(z))$  is  
\begin{equation} \label{ezlambda} \vec{e}(z; \lambda) = 
	\begin{pmatrix} e_1(z,\lambda) \\ e_2(z,\lambda) \\ e_3(z,\lambda) \end{pmatrix} 
	= \begin{pmatrix} w_{13}(\lambda - z-1) + w_{12} w_{23} \\	
		w_{23}(\lambda-z-1) + w_{13} w_{21} z \\
		(\lambda-z-1)^2 - w_{12} w_{21} z
	\end{pmatrix}.
\end{equation} Hence $E(z)$ is equal to
\begin{equation} \label{Edef2}
	\scalebox{0.9}{ $\begin{pmatrix} w_{13}(\lambda_1(z) - z-1) + w_{12} w_{23} & 
	w_{13}(\lambda_2(z) - z-1) + w_{12} w_{23} & %
	w_{13}(\lambda_3(z) - z-1) + w_{12} w_{23} \\ %
	w_{23}(\lambda_1(z)-z-1) + w_{13} w_{21} z & %
		w_{23}(\lambda_2(z)-z-1) + w_{13} w_{21} z & %
			w_{23}(\lambda_3(z)-z-1) + w_{13} w_{21} z \\ %
	(\lambda_1(z)-z-1)^2 - w_{12} w_{21} z & %
		(\lambda_2(z)-z-1)^2 - w_{12} w_{21} z & %
			(\lambda_3(z)-z-1)^2 - w_{12} w_{21} z %
\end{pmatrix}$}.
\end{equation}

\begin{lemma}
	The matrix valued function $E$  defined by \eqref{Edef}
	satisfies the following. 
	\begin{enumerate}
	\item[\rm (a)] $E$ is analytic and invertible on  $\mathbb C \setminus \mathbb R$.
	\item[\rm (b)] The spectral decomposition 
	\eqref{Wdecomp} holds for $z \in \mathbb C \setminus \mathbb R$.
		
	\item[\rm (c)] $E$ satisfies the jump property $E_+ = E_- J_E$ on the real line
	where
	\begin{align} \label{Ejump}
		J_E = \begin{cases} \sigma_{12}, & \text{ on } (-\infty,0], \\
			\sigma_{23}, & \text{ on } (0,z_{\min}) \cup (z_{\max},\infty), \\
			I_3, & \text{ on } [z_{\min},z_{\max}], 
			\end{cases} 
	\end{align}
	where we recall that $\sigma_{12}$ and $\sigma_{23}$ are
	given by \eqref{sigmadef}.
	\item[\rm (d)] As $z \to \infty$ with $\pm \Im z > 0$,
	and $c_{\lambda} > 0$ and $\omega = e^{2\pi i/3}$ as in Lemma \ref{lemma23} (c), one has
	\begin{multline} \label{Easymp} 
		\diag\left(z^{-2/3}, z^{-1}, z^{-4/3}\right)E(z) = 
		\diag\left(w_{13} c_\lambda, w_{13} w_{21}, c_\lambda^2\right)
		\begin{pmatrix} 1 & \omega^{\pm 1} & \omega^{\mp 1} \\
			1 & 1 & 1 \\
			1 & \omega^{\mp 1} & \omega^{\pm 1} \end{pmatrix}  \\
		+  
		\diag\left(\frac{1}{3} w_{13} c_\lambda^2 , w_{23} c_{\lambda}, \frac{2}{3} c_{\lambda}^3 - w_{12} w_{21}\right)
		\begin{pmatrix} 1 & \omega^{\mp 1} & \omega^{\pm 1} \\
			1 & \omega^{\pm 1} & \omega^{\mp 1} \\
			1 & 1 & 1  \end{pmatrix} z^{-1/3}  + \mathcal{O}(z^{-2/3})
	\end{multline} 
	and
	\begin{multline} \label{Einvasymp} 
		E^{-1}(z) \diag\left(z^{2/3}, z, z^{4/3}\right) =
		\frac{1}{3}
		\begin{pmatrix} 1 & 1 & 1 \\
			\omega^{\mp 1} & 1 & \omega^{\pm 1} \\
			\omega^{\pm 1} & 1 & \omega^{\mp 1}
		\end{pmatrix}  
		\diag\left(w_{13} c_\lambda, w_{13} w_{21}, c_\lambda^2\right)^{-1} \\
		-  
		\frac{1}{3}
		\begin{pmatrix} 1 & 1 & 1 \\
			1 & \omega^{\pm 1} & \omega^{\mp 1} \\
			1 & \omega^{\mp 1} & \omega^{\pm 1}
		\end{pmatrix}  
		\diag \left(\frac{w_{23}}{w_{13}^2w_{21}}, \frac{\frac{2}{3} c_{\lambda}^3 - w_{12}w_{21}}{w_{13} w_{21} c_\lambda^2}, \frac{1}{3 c_\lambda} \right)
		z^{-1/3} 
		+ \mathcal{O}(z^{-2/3}).
	\end{multline}  	
		\end{enumerate}
	\end{lemma}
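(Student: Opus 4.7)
The plan is to prove the four items in order, with (a)--(c) being short consequences of Lemma~\ref{lemma23} and (d) being a direct substitution.

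For part (a), I would compute $\det E(z)$ explicitly by exploiting the fact that the entries of $\vec e(z;\lambda)$ in \eqref{ezlambda} are polynomials of degree $\le 2$ in $\lambda$. Writing $\mu_j = \lambda_j(z)-z-1$, the three columns of $E(z)$ lie in the image of a fixed linear map applied to $(1,\mu_j,\mu_j^2)^\top$, so $E(z)=A(z)V(z)$ with a Vandermonde $V(z)_{ij}=\mu_j^{i-1}$ and an explicit triangular-up-to-a-column matrix $A(z)$ with $\det A(z) = w_{12}w_{23}^{2} - w_{13}^{2}w_{21}z$. Because the $w_{jk}$ are real positive, $\det A(z)\neq 0$ for $z\in\mathbb C\setminus\mathbb R$, and $\det V(z)=\prod_{j<k}(\lambda_k(z)-\lambda_j(z))\neq 0$ by the strict ordering \eqref{lambdajstrict}. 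Together with the analyticity of the $\lambda_j$ from Lemma~\ref{lemma23}(a), this gives (a). Part (b) is then immediate: since $\lambda_j(z)$ is a simple eigenvalue of $W(z)$, $\adj(\lambda_j(z)I_3-W(z))$ has rank one with column space $\ker(\lambda_j(z)I_3-W(z))$, and (a) guarantees the third column is nonzero, hence an eigenvector, so $WE=E\Lambda$.

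For part (c), the entries of $E(z)$ are polynomials in $\lambda$ evaluated at $\lambda=\lambda_j(z)$, so the columns of $E$ permute across the real line exactly as the $\lambda_j$ do. The permutations recorded in \eqref{Lambdajump} translate into right-multiplication by the transpositions $\sigma_{12}$ on $(-\infty,0]$ and $\sigma_{23}$ on $(0,z_{\min})\cup(z_{\max},\infty)$, with the identity on the cut $[z_{\min},z_{\max}]$, which is exactly \eqref{Ejump}.

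Part (d) is the main computation. For each column I substitute the corresponding expansion \eqref{lambda1asymp}--\eqref{lambda3asymp} into \eqref{ezlambda}. The leading orders of $e_1,e_2,e_3$ are $w_{13}c_\lambda\omega^{\epsilon}z^{2/3}$, $w_{13}w_{21}z$, and $c_\lambda^{2}\omega^{2\epsilon}z^{4/3}$ respectively (with appropriate choice of $\epsilon\in\{0,\pm1\}$ per column), which produces the row rescaling $\diag(z^{-2/3},z^{-1},z^{-4/3})$ and the first $\omega$-matrix of \eqref{Easymp}. The $z^{-1/3}$ correction is assembled from the next terms in \eqref{lambda1asymp}--\eqref{lambda3asymp}, from $(\lambda_j-z-1)^2$ expanded one order further, and from the lower-order constants $w_{12}w_{23}$ and $-w_{12}w_{21}z$; collecting them yields exactly the second $\omega$-matrix in \eqref{Easymp}. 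The formula \eqref{Einvasymp} is then obtained perturbatively from the identity $(M_0+z^{-1/3}M_1+\cdots)^{-1}=M_0^{-1}-z^{-1/3}M_0^{-1}M_1M_0^{-1}+\cdots$, using the key observation that after stripping the outer diagonal factor, $M_0$ is the $3\times3$ discrete Fourier matrix in $\omega$, whose inverse is $\tfrac{1}{3}$ times its conjugate transpose; this produces the leading term in \eqref{Einvasymp} automatically.

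The main obstacle is not conceptual but bookkeeping: keeping the $\omega^{\pm 1}$ factors straight across the three columns (the signs depending on $\pm\Im z>0$) and verifying that the perturbative inversion collapses exactly to the diagonals of $\tfrac{w_{23}}{w_{13}^{2}w_{21}},\ \tfrac{\frac{2}{3}c_\lambda^{3}-w_{12}w_{21}}{w_{13}w_{21}c_\lambda^{2}},\ \tfrac{1}{3c_\lambda}$ shown in \eqref{Einvasymp}. This is a finite, explicit check; no new ingredient beyond Lemma~\ref{lemma23}(c) is needed.
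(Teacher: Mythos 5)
Your proposal is correct, and the only place it genuinely departs from the paper is in part (a). To show $E(z)$ is invertible off $\mathbb{R}$, the paper argues that the first-row entry $e_1(z,\lambda_j(z))=w_{13}(\lambda_j(z)-z-1)+w_{12}w_{23}$ cannot vanish: the zero set of $e_1$ on $\mathcal R$ lies on the bounded oval (a forward appeal to the geometry later detailed in Lemma~\ref{lemma53} and Figure~\ref{fig:pointQstar}), hence each column of $E(z)$ is a nonzero eigenvector and invertibility follows from simplicity of the spectrum. You instead factor $E(z)=A(z)V(z)$ with $V$ the Vandermonde matrix in $\mu_j=\lambda_j(z)-z-1$ and $A(z)$ an explicit matrix with $\det A(z)=w_{12}w_{23}^2-w_{13}^2w_{21}z$, whose sole zero is the real number $z^*$; combined with $\det V=\prod_{j<k}(\lambda_k-\lambda_j)\neq0$ from \eqref{lambdajstrict}, this both proves invertibility and reproduces the determinant formula \eqref{Edet} of Lemma~\ref{lemma53}(a) as a byproduct. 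Your route is more self-contained (no forward reference to the bounded oval), while the paper's route foregrounds the geometric picture it develops anyway. For (b) you invoke the rank-one property of $\adj(\lambda_j I-W)$; the paper reaches the same conclusion directly from $(\lambda I-W)\adj(\lambda I-W)=\det(\lambda I-W)I$, but the substance is identical. Parts (c) and (d) follow the paper's reasoning: the columns of $E$ permute across $\mathbb{R}$ exactly as the $\lambda_j$ do by \eqref{Lambdajump}, and \eqref{Easymp}--\eqref{Einvasymp} come from substituting \eqref{lambda1asymp}--\eqref{lambda3asymp} into \eqref{Edef2}, which the paper dispatches as a straightforward calculation.
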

	\begin{proof}
	(a) and (b)		
		By the properties of adjugate matrix one has
		\[ (\lambda I_3 - W(z)) \adj(\lambda I_3 - W(z))
		= \det(\lambda I_3 - W(z)) I_3. \]
		Hence, 
		\[ (\lambda I_3 - W(z)) \vec{e}(z;\lambda) = \det(\lambda I_3 - W(z)) 
		\begin{pmatrix} 0 \\ 0 \\ 1 \end{pmatrix}, \]
		and then by \eqref{Edef} it follows that 
		$W(z) E(z) = E(z) \Lambda(z)$.
		Thus \eqref{Wdecomp} holds whenever $E(z)$ is invertible.
		
	Because of \eqref{ezlambda} and \eqref{Edef2} the first
	row of $E(z)$ is built out of the function $e_1(z,\lambda) = w_{13}(\lambda-z-1) + w_{12} w_{23}$. This is a line in the real $z,\lambda$-plane that
	intersects the bounded oval in two points $Q^*$ and $Q_1^*$,
	see Figure \ref{fig:pointQstar} and also below, and these are the only
	zeros of $e_1$ on $\mathcal R$. Thus, if $z \in \mathbb C \setminus \mathbb R$
	then $e_1(z, \lambda_j(z)) \neq 0$ for every $j=1,2,3$. It then
	follows that $\vec{e}(z,\lambda_j(z))$ cannot be the zero vector,
	and thus it is an eigenvector of $W(z)$ with eigenvalue $\lambda_j(z)$. 
	Since $W(z)$ has simple spectrum for $z \in \mathbb C \setminus \mathbb R$
	the eigenvectors are linearly independent, and it follows that
	$E(z)$ is invertible. The analyticity of $E$ is clear from
	\eqref{Edef2}, and we proved parts (a) and (b) of the Lemma.
		
	\medskip	
	(c) The jump matrices \eqref{Ejump} readily follow from
	\eqref{Edef2} and the jump properties of the $\lambda$-functions
	on the real line 	contained in \eqref{Lambdajump}. 

\medskip
(d) The asymptotic behaviors \eqref{Easymp} and \eqref{Einvasymp}
follow  from \eqref{Edef2} and \eqref{lambda1asymp}--\eqref{lambda3asymp} by 
straightforward  calculations.
\end{proof}

\begin{figure}[t]
\begin{center}
\begin{overpic}[trim=0 12cm 2cm 0cm, clip,scale=0.5]{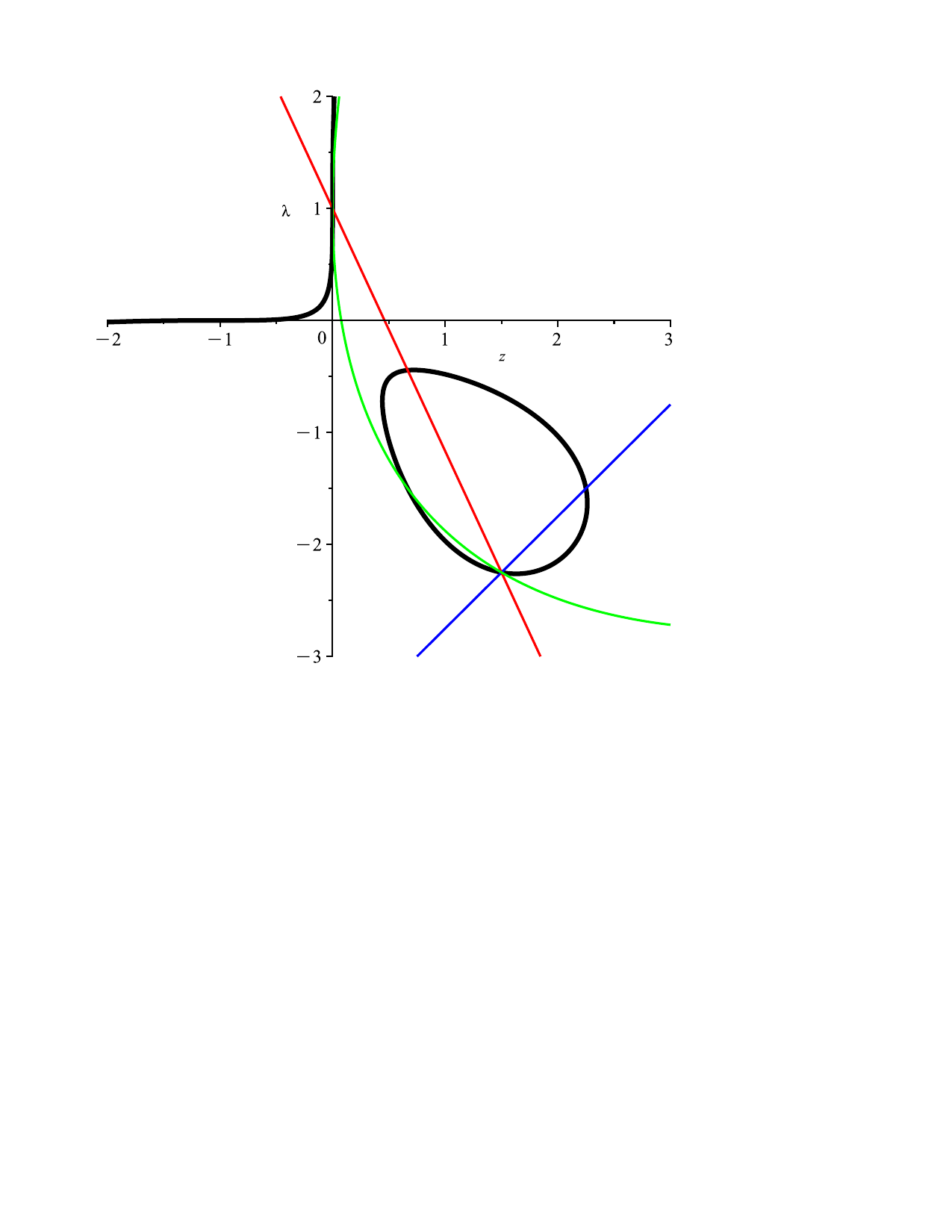}
	\put(55,10){$Q^*$}
	\put(68,22){$Q_1^*$}
	\put(41,22){$Q_3^*$}
	\put(47,40){$Q_2^*$}
	\put(40,57){$P_0$}
	\put(25,45){$P_1$}
\end{overpic}
\caption{Real part of the spectral curve in the $z$-$\lambda$ plane (in black) together with the zero level lines
of the functions that define the matrix $E$. The three zero level lines
intersect the spectral curve at the point $z=z^*$, $\lambda= \lambda^*$ on
the bounded oval. Each zero level line has a second intersection with
the bounded oval. \label{fig:pointQstar}}
\end{center}
\end{figure}

If $z$ is one of the branch points $0,z_{\min}$, or $z_{\max}$
then $E(z)$ is not invertible as two or more columns in
\eqref{Edef2} are identical. There is one 
additional real value $z^*$ for which $E(z)$ is not invertible.

\begin{lemma} \label{lemma53}
	\begin{enumerate}  
	\item[\rm (a)]  We have
	\begin{equation} \label{Edet}	
		\det E = (w_{12} w_{23}^2 - w_{13}^2 w_{21}z) 
		(\lambda_2-\lambda_1)(\lambda_3-\lambda_1)(\lambda_3-\lambda_2). \end{equation}
	\item[\rm (b)]
	$(\det E)^2$ is a polynomial of degree six with
	simple zeros at $z_{\min}$, $z_{\max}$, a double zero at $0$,
	and also a double zero at
	\begin{equation} \label{zstardef} 
		z^* = \frac{w_{12} w_{23}^2}{w_{13}^2 w_{21}}. 
		\end{equation}
	
	\item[\rm (c)]
	There is a value $\lambda^*$, such that
	$(z^*, \lambda^*)$
	with $z^*$ given by \eqref{zstardef}, 
	is a common zero of all three functions $e_1$, $e_2$, $e_3$
	defined in \eqref{ezlambda}.
	The point $Q^*= (z^*, \lambda^*)$ is on the bounded
	oval of the Riemann surface.
	
	\item[\rm (d)]
	Each of the equations $e_j=0$ for $j=1,2,3$ has exactly one other
	zero on the bounded oval, say at $Q_j^*$. The three points $Q_j^* = (z_j^*, \lambda_j^*)$, $j=1,2,3$, 
	are all distinct
	and their images under the Abel map lie at equal distances from each other
	on the line $\Im u = \frac{1}{2} \Im \tau$, namely at
	$\frac{\tau}{2} +x$, $\frac{\tau}{2} + x + \frac{1}{3}$, and $\frac{\tau}{2} + x + \frac{2}{3}$ modulo $L$ for some real $x$.	
	\end{enumerate}
	
\end{lemma}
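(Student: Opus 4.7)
The proof splits into the four parts, which I would tackle in the stated order.

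For part (a), the approach is a direct computation using multilinearity of the determinant. Looking at \eqref{Edef2}, the three rows of $E$ lie in the span of the monomial row vectors $\Lambda_k = (\lambda_1^k, \lambda_2^k, \lambda_3^k)$ for $k = 0,1,2$: specifically, row one equals $w_{13}\Lambda_1 + (w_{12}w_{23} - w_{13}(z+1))\Lambda_0$, row two equals $w_{23}\Lambda_1 + (w_{13}w_{21}z - w_{23}(z+1))\Lambda_0$, and row three equals $\Lambda_2 - 2(z+1)\Lambda_1 + ((z+1)^2 - w_{12}w_{21}z)\Lambda_0$. Expanding by multilinearity, only the terms that pick three distinct monomial vectors (one per row) survive, and their coefficients collect into the prefactor $w_{12}w_{23}^2 - w_{13}^2 w_{21}z$ multiplying the Vandermonde $\det(\Lambda_0,\Lambda_1,\Lambda_2) = V$ (with signs tracked to match the stated formula). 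Part (b) then follows by squaring: $V^2 = \Disc_\lambda P$ is given explicitly by \eqref{DiscP} and contributes a double zero at $0$ together with simple zeros at $z_{\min},z_{\max}$, while the squared linear prefactor contributes a double zero at $z^*$.

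For part (c), the equations $e_1 = 0$ and $e_2 = 0$ are each linear in $(z,\lambda)$, and solving the resulting $2\times 2$ system yields the unique candidate $z^* = w_{12}w_{23}^2/(w_{13}^2 w_{21})$ and $\lambda^* = z^* + 1 - w_{12}w_{23}/w_{13}$; one then checks $e_3(z^*,\lambda^*) = 0$ by a direct substitution using $(\lambda^* - z^* - 1)^2 = w_{12}^2 w_{23}^2/w_{13}^2 = w_{12}w_{21}z^*$. To put $Q^*$ on $\mathcal R$, note that $\vec{e}(z^*,\lambda^*) = 0$ makes the third column of $\adj(\lambda^* I_3 - W(z^*))$ vanish, so $\adj$ is singular; but $\adj(A)$ is invertible whenever $A$ is, hence $\det(\lambda^* I_3 - W(z^*)) = 0$. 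The hardest step is to show that $Q^*$ actually lies on the \emph{bounded} oval rather than the unbounded one. Since $Q^*$ is real it lies on one of the two ovals, and $z^* > 0$; I expect the cleanest route is to observe that $\lambda_1(z)$ is single-valued and strictly positive on the positive real axis (the unbounded oval's portion over $z > 0$), and then verify by algebraic manipulation using the assumptions \eqref{Eq1ab}--\eqref{Eq3ab} that $\lambda^* < 0$, which simultaneously forces $z^* \in [z_{\min},z_{\max}]$ and places $\lambda^* \in \{\lambda_2(z^*),\lambda_3(z^*)\}$.

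Part (d) I would handle via Abel's theorem, using the divisor of each $e_j$ on $\mathcal R$. With the local coordinate $z^{-1/3}$ at $P_\infty$ the expansions \eqref{lambda1asymp}--\eqref{lambda3asymp} give that $\lambda - z - 1$ has a pole of order $2$ while $z$ has a pole of order $3$, so by inspection $e_j$ has pole order $j+1$ at $P_\infty$; with the local coordinate $z^{1/3}$ at $P_0$ and the expansion $\lambda - 1 = c z^{1/3}(1 + o(1))$, one likewise finds that $e_j$ vanishes to order $j-1$ at $P_0$. Combined with part (c) and the degree count, this gives
\begin{align*}
\divis(e_1) & = Q^* + Q_1^* - 2 P_\infty, \\
\divis(e_2) & = P_0 + Q^* + Q_2^* - 3 P_\infty, \\
\divis(e_3) & = 2 P_0 + Q^* + Q_3^* - 4 P_\infty.
\end{align*}
Applying Abel's theorem and inserting $\mathcal A(P_\infty) \equiv \tfrac{1}{6}$, $\mathcal A(P_0) \equiv \tfrac{5}{6}$ from Proposition \ref{prop24} yields $\mathcal A(Q_1^*) \equiv \tfrac{1}{3} - \mathcal A(Q^*)$, $\mathcal A(Q_2^*) \equiv -\tfrac{1}{3} - \mathcal A(Q^*)$, and $\mathcal A(Q_3^*) \equiv -\mathcal A(Q^*)$ modulo $L$. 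Since $Q^*$ sits on the bounded oval by (c), $\mathcal A(Q^*) \equiv \tfrac{\tau}{2} + x_0$ for some real $x_0$, and using $-\tfrac{\tau}{2} \equiv \tfrac{\tau}{2} \pmod{L}$ all three images of the $Q_j^*$ take the form $\tfrac{\tau}{2} + x + k/3$ with $k \in \{0,1,2\}$ for the common shift $x = -x_0$, which gives both the stated equal spacing on the line $\Im u = \tfrac{1}{2}\Im\tau$ and the distinctness of the three points.
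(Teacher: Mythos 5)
Your overall strategy matches the paper's for all four parts, and the divisor computations, Abel's theorem applications, and the use of $\mathcal A(P_\infty) \equiv \tfrac{1}{6}$, $\mathcal A(P_0) \equiv \tfrac{5}{6}$ in part (d) are exactly right; for part (a) your decomposition of the rows of $E$ as linear combinations of the Vandermonde rows $(\lambda_1^k,\lambda_2^k,\lambda_3^k)$ is a cleaner way to organize what the paper dispatches as ``direct calculation,'' and your adjugate-invertibility argument in (c) is a fine alternative to the paper's cross-product phrasing.

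The one place where you should be careful is the last step of part (c), placing $Q^*$ on the bounded oval. You propose to ``verify by algebraic manipulation using the assumptions \eqref{Eq1ab}--\eqref{Eq3ab} that $\lambda^*<0$,'' but this is not a short manipulation: from \eqref{lambdastar1} one gets $\lambda^* = z^*+1 - w_{12}w_{23}/w_{13}$, and positivity of the $w_{jk}$ alone does \emph{not} force $\lambda^*<0$; you would need to express $w_{12},w_{13},w_{21},w_{23}$ back in terms of the $a_{jk},b_{jk}$ and invoke the constraint equations, ultimately recovering \eqref{zstar}. The paper instead uses a more economical argument that only needs $\lambda^* < z^*+1$ (immediate from \eqref{lambdastar1} since $w_{12}w_{23}/w_{13}>0$): because $\tr W(z^*) = 3(z^*+1) > 3\lambda^*$, the eigenvalue $\lambda^*$ cannot be $\lambda_1(z^*)$, since $\lambda_1$ dominates in modulus and $\lambda_1(z^*)>0$, so the sum $\lambda_1+\lambda_2+\lambda_3$ would be $\leq 3\lambda_1$ — a contradiction. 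This trace argument sidesteps the $w$-to-$(a,b)$ translation entirely and only uses general positivity, so you should replace your unfinished computation with it (or at least be aware that the bound $\lambda^*<0$ genuinely requires the constraint equations rather than mere positivity of the $w$'s).
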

\begin{proof}
(a) follows from the formula \eqref{Edef2} for $E(z)$, by direct calculation.
\medskip

(b) From \eqref{Edet} and \eqref{zstardef} we obtain
\[ (\det E)^2 = w_{13}^4 w_{21}^2 (z-z^*)^2
	\left[(\lambda_2-\lambda_1)(\lambda_3-\lambda_1)(\lambda_3-\lambda_2)\right]^2 \]
The product $	\left[(\lambda_2-\lambda_1)(\lambda_3-\lambda_1)(\lambda_3-\lambda_2)\right]^2$ is a polynomial in $z$. It is the discriminant
\eqref{DiscP} of the spectral curve. Therefore it is
a degree four polynomial with a double zero at $z=0$ 
and two simple zeros at $z_{\min}$ and $z_{\max}$. 
Part (b) follows.

\medskip

(c) The equations $e_1 = 0$, $e_2 = 0$ give two straight 
lines in the
real  $z$-$\lambda$
	plane that intersect at $(z^*,\lambda^*)$ 
	with $z^*$ given by \eqref{zstardef} and $\lambda^* \in \mathbb R$
	satisfying
	\begin{equation} \label{lambdastar1} 
		\lambda^* - z^* -1 = -\frac{w_{12} w_{23}}{w_{13}} =
		- \frac{w_{13} w_{21}}{w_{23}} z^*. \end{equation}
	Hence
	\[ (\lambda^* - z^* -1)^2 = \left(-\frac{w_{12} w_{23}}{w_{13}} \right) 
	   \left(- \frac{w_{13} w_{21}}{w_{23}}\right) z^* 
	   	= w_{12} w_{21} z^* \] 
	and we see from the expression for $e_3$ in \eqref{ezlambda},  that $e_3(z^*,\lambda^*) = 0$ as well. 
	
	Then $\vec{e}(z^*,\lambda^*)$ is the null vector. Since it is the cross
	product of the first two rows of $\lambda^* I_3 - W(z^*)$, we conclude
	that the first two rows are linearly dependent. Then $\lambda^* I_3 - W(z^*)$ is not invertible, and thus $\det(\lambda^* I_3 - W(z^*))=0$,
	i.e., $Q^* = (z^*,\lambda^*)$ is on the spectral curve, and
	$\lambda^* = \lambda_j(z^*)$ for some $j=1,2,3$.
	
	Because of \eqref{lambdastar1} and $z^* > 0$ we have $\lambda^* < z^*+1$.
	Then due to \eqref{Wzstructure}, 
	\[ \lambda_1(z^*) + \lambda_2(z^*) + \lambda_3(z^*)
		= \tr W(z^*) = 3(z^*+1) > 3 \lambda_j(z^*). \]
	The case $j=1$ would  lead to
	a contradiction with the ordering \eqref{lambdajordered} of absolute
	values, as $\lambda_1(z^*) > 0$. Thus $j=2$ or $j=3$, and in either case 
	the point $Q^*$ is on the bounded oval of the Riemann surface.
	
	\medskip
	
	(d)	
	For $j=1,2,3$ we have that $e_j$, viewed as a function on the spectral curve,
	is meromorphic with pole of order $j+1$ at $P_{\infty}$ and
	no other poles. In addition, $e_2$ has a zero at $P_0$, and 
	$e_3$ has a double zero at $P_0$, as follows from their expressions
	in \eqref{ezlambda}. All three of them have a zero at $Q^*$, as we just proved in part (c).
	Then $e_j$ has one additional zero on $\mathcal R$, say at $Q_j^*$,
	and by Abel's theorem, we have
	\begin{align*} 
		\mathcal{A}(Q^*) + \mathcal A(Q_1^*) & \equiv 2 \mathcal A(P_{\infty}), \\
		\mathcal{A}(Q^*) + \mathcal A(Q_2^*) + \mathcal A(P_0) & \equiv 3 \mathcal A(P_{\infty}), \\
		 \mathcal{A}(Q^*) + \mathcal A(Q_3^*) + 2\mathcal A(P_0) & \equiv 4 \mathcal A(P_{\infty}), \end{align*} 
		modulo $L$.  
	Since $\mathcal A(P_\infty) \equiv \frac{1}{6}$ and $\mathcal A(P_0) \equiv \frac{5}{6}$, the equations are
	\begin{align*} 
		\mathcal A(Q_1^*) \equiv - \mathcal{A}(Q^*) + \frac{1}{3}, \\
		\mathcal A(Q_2^*) \equiv - \mathcal{A}(Q^*) - \frac{1}{3}, \\
		\mathcal A(Q_3^*) \equiv - \mathcal{A}(Q^*) - 1. 
	\end{align*} 
	This shows that the values of the Abel map are indeed of the form
	$x + \frac{\tau}{2} + \frac{j}{3}$, $j=1,2,3$, for some real $x$,
	as claimed in part (d) of the lemma.  
	
	We also conclude that $\mathcal A(Q_j^*) \in \frac{\tau}{2} + [0,1]$,
	and thus $Q_j^*$ is on the bounded oval for $j=1,2,3$,
	see Figure \ref{fig:pointQstar} for an illustration.
\end{proof}

It is possible to write the coordinates of $Q^*$ directly in
terms of the input data $a_{jk}$ and $b_{jk}$, namely
\begin{equation} \label{zstar} z^* = \frac{a_{12} b_{31}}{a_{31} b_{11}}, \quad
	\lambda^* = - \frac{a_{11} b_{21}}{a_{22} b_{11}}. \end{equation}	
From part (d) of the lemma it follows that $z^* \in [z_{\min},z_{\max}]$.
It is possible that $z^*$ coincides with one of the branch points $z_{\min}$
or $z_{\max}$. In such a situation, we should interpret part (b) as saying
that $(\det E)^2$ has a triple zero at that branch point. For generic parameters this is however not the case.

For the special choice of parameters in \eqref{Tjconcrete}
one has $z^* = \alpha_1$, $\lambda^* = - \frac{\alpha_1}{\alpha_2}$.

\begin{center}
	\begin{table} 
		\begin{tabular}{c|c|c|c|c|c|c}
			& $P_0$ & $P_\infty$ & $Q^*$ & $B_{\min}$ & $B_{\max}$ & \\ \hline & & & & & \\
			$e_1$ & $-$ & double pole & simple zero & $-$ & $-$ &   \\
			$e_2$ & simple zero & triple pole & simple zero & $-$ & $-$ &  \\
			$e_3$ & double zero & quadruple pole & simple zero & $-$ & $-$ & \\ \hline & & & & & \\
			$\widetilde{e}_1$ & $-$ & double zero & simple pole & simple pole & simple pole &  \\
			$\widetilde{e}_2$ & simple pole & triple zero & simple pole & simple pole & simple pole &  \\
			$\widetilde{e}_3$ & double pole & quadruple zero & simple pole & simple pole & simple pole &  \\ \hline
			& & & & & & 
		\end{tabular}
		\caption{Poles and zeros of the meromorphic functions
			$e_1, e_2, e_3$, and $\widetilde{e}_1, \widetilde{e}_2, \widetilde{e}_3$. Each of the functions has an additional zero on the bounded oval
			that is not listed in the table. \label{table1}}
	\end{table}
\end{center}

We have seen in \eqref{ezlambda} and \eqref{Edef2}
that $E(z)$ is built out of three meromorphic functions
$e_1, e_2, e_3$ on the Riemann surface. There is a similar
structure for $E(z)^{-1}$, that we note here for future
reference. 
\begin{lemma} \label{lemma54}
	There exist meromorphic functions $\widetilde{e}_1$,
	$\widetilde{e}_2$, $\widetilde{e}_3$ on $\mathcal R$
	such that
	\begin{equation} \label{Einvdef}
		E(z)^{-1} = \begin{pmatrix} 
			\widetilde{e}_k(z, \lambda_j(z)) \end{pmatrix}_{j,k=1}^3. 
	\end{equation}
\end{lemma}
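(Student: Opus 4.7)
The plan is to construct $\widetilde e_1, \widetilde e_2, \widetilde e_3$ by mimicking Definition \ref{def:E} but with the third \emph{row} of the adjugate $\operatorname{adj}(\lambda I_3 - W(z))$ in place of the third column. The underlying principle is that the rows of $E(z)^{-1}$ are left eigenvectors of $W(z)$, and left eigenvectors can be read off from the rows of $\operatorname{adj}(\lambda_j I_3 - W(z))$, which at a simple eigenvalue $\lambda_j$ is a rank-one matrix. The normalization is then fixed by the constraint $E(z)^{-1} E(z) = I_3$.

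Concretely, let $\tilde e_k(z,\lambda)$ denote the $(3,k)$ entry of $\operatorname{adj}(\lambda I_3 - W(z))$; these are polynomials in $z,\lambda$. From $\operatorname{adj}(\lambda I_3 - W(z))\,(\lambda I_3 - W(z)) = P(z,\lambda)\, I_3$ and the factorization $P(z,\lambda) = \prod_i(\lambda-\lambda_i(z))$, one obtains at a simple eigenvalue $\lambda_j(z)$ the rank-one identity
\[
  \operatorname{adj}(\lambda_j I_3 - W(z)) = P_\lambda(z,\lambda_j)\,\vec r_j\, \vec\ell_j^{\,T},
\]
where $P_\lambda = \partial P/\partial\lambda$, $\vec r_j$ is the $j$-th column of $E(z)$, and $\vec\ell_j^{\,T}$ is the $j$-th row of $E(z)^{-1}$, normalized by $\vec\ell_j^{\,T}\vec r_j = 1$. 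By Definition \ref{def:E}, the third column of the left-hand side is $\vec e(z,\lambda_j)=\vec r_j$, and comparing with the right-hand side forces $(\vec\ell_j)_3 = 1/P_\lambda(z,\lambda_j)$. Reading the third row then gives
\[
  (E(z)^{-1})_{j,k} \;=\; (\vec\ell_j)_k \;=\; \frac{\tilde e_k(z,\lambda_j(z))}{P_\lambda(z,\lambda_j(z))\, e_3(z,\lambda_j(z))}.
\]

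This motivates the definition
\[
  \widetilde e_k(z,\lambda) \;:=\; \frac{\tilde e_k(z,\lambda)}{P_\lambda(z,\lambda)\, e_3(z,\lambda)}, \qquad k=1,2,3,
\]
which is a rational function of $(z,\lambda)$. Its restriction to the spectral curve $\{P(z,\lambda)=0\}$ is a meromorphic function on $\mathcal R$, and by construction $(E(z)^{-1})_{j,k} = \widetilde e_k(z,\lambda_j(z))$, as required.

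The main obstacle is to justify the rank-one identity and handle the locus where $\lambda_j$ is not simple. Away from the branch points $0$, $z_{\min}$, $z_{\max}$, $\infty$ the three eigenvalues are distinct and the argument is direct. At branch points the factor $P_\lambda(z,\lambda_j)$ vanishes, as does $\det E(z)$, so the individual $\widetilde e_k$ acquire poles, but they remain meromorphic on $\mathcal R$; the identity $(E^{-1})_{j,k} = \widetilde e_k(z,\lambda_j(z))$ extends by continuity in the meromorphic sense, consistently with the analytic structure of $E^{-1}$ established earlier.
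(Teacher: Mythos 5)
Your proof is correct but takes a genuinely different route from the paper. The paper proceeds abstractly: it introduces the pairing $\langle e,\widetilde e\rangle=\sum_{l=1}^3 e(z,\lambda_l(z))\,\widetilde e(z,\lambda_l(z))$, prescribes the zeros and poles of $\widetilde e_1,\widetilde e_2,\widetilde e_3$ (Table \ref{table1}) so that they are determined up to scalar multiples, and then shows $\langle e_j,\widetilde e_k\rangle=\delta_{jk}$ by observing that each pairing is a rational function of $z$ whose apparent singularities at $0,z_{\min},z_{\max},\infty$ are removable, hence a constant, whose value is read off at $P_0$. You instead produce an explicit formula, $\widetilde e_k(z,\lambda)=\frac{[\adj(\lambda I_3-W(z))]_{3,k}}{P_\lambda(z,\lambda)\,e_3(z,\lambda)}$, derived from the rank-one structure $\adj(\lambda_j I_3-W)=P_\lambda(z,\lambda_j)\,\vec r_j\,\vec\ell_j^{\,T}$ at a simple eigenvalue (a consequence of Jacobi's formula $\tr\adj(\lambda I_3-W)=P_\lambda$ together with the normalization $\vec\ell_j^{\,T}\vec r_j=1$ forced by $E^{-1}E=I_3$), then comparing the third column and the third row. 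This is constructive and self-contained. What the paper's abstract route buys is the divisor data of the $\widetilde e_k$ recorded in Table \ref{table1}, which is used repeatedly afterwards (in the proofs of \textbf{RHP-T4} and \textbf{RHP-M4}); with your formula that information would have to be extracted by a separate computation. One small remark: your closing discussion of ``extending by continuity in the meromorphic sense'' is dispensable --- for $z\in\mathbb{C}\setminus\mathbb{R}$ the eigenvalues are automatically simple by \eqref{lambdajordered}, so the rank-one argument applies directly on the whole domain where $E(z)^{-1}$ exists, and the identification of its entries with the restrictions to the sheets of the meromorphic functions $\widetilde e_k$ is established at once.
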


\begin{proof}
	For two meromorphic functions $e$ and $\widetilde{e}$ on
	$\mathcal R$ we write
	\begin{equation} \label{pairing} \langle e, \widetilde{e} \rangle
		= \sum_{l=1}^3 e(z, \lambda_l(z)) \, \widetilde{e}(z, \lambda_l(z)). \end{equation}
	This is a rational function in the $z$-variable. 
	Then, that for any choice of $\widetilde{e}_k$ for $k=1,2,3$
	we will have because of \eqref{Edef2} and \eqref{pairing}
	\begin{align*} E(z) \begin{pmatrix} 
			\widetilde{e}_k(z, \lambda_l(z)) \end{pmatrix}_{j,k=1}^3
		& = \begin{pmatrix} 
			e_j(z, \lambda_l(z)) \end{pmatrix}_{j,l=1}^3
		\begin{pmatrix} 
			\widetilde{e}_k(z, \lambda_l(z)) \end{pmatrix}_{l,k=1}^3 \\
		& = \begin{pmatrix} \langle e_j, \widetilde{e}_k \rangle \end{pmatrix}_{j,k=1}^3.
	\end{align*}
	Hence we are looking for $\widetilde{e}_k$ such that
	\begin{align} \label{dualpair} \langle e_j, \widetilde{e}_k \rangle = \delta_{j,k} \end{align}
	for $j, k = 1,2,3$, and then \eqref{Einvdef} follows.
		
	In Table \ref{table1} we are specifying zeros and poles
	of $\widetilde{e}_1$, $\widetilde{e}_2$, and $\widetilde{e}_3$.
	The table also includes the zeros and poles of
	$e_j$ for $j=1,2,3$, as determined in Lemma~\ref{lemma53} and its proof,
	except for the zero $Q_j^*$, which is not included in the table. 
	
	The table also gives one more pole than zero for each $\tilde{e}_j$ (counting multiplicities). The Abel theorem
	tells  us that there is one more zero for each $\tilde{e}_j$
	and this additional zero is on the bounded oval. It could
	coincide with one of $Q^*$, $B_{\min}$ or $B_{\max}$
	and in such a case there is a zero/pole cancellation.
	
	With this understanding $\widetilde{e}_1$, $\widetilde{e}_2$, and $\widetilde{e}_3$ are uniquely determined, up
	to a multiplicative constant.
	
	From the table it can be checked
	$e_j \widetilde{e}_k$ has at most double poles at $P_0$
	and $P_{\infty}$ and at most simple poles at $B_{\min}$
	and $B_{\max}$, and these are the only poles.
	Thus $\langle e_j, \widetilde{e}_k \rangle$ is
	a rational function with possible poles at $0$, $z_{\min}$,
	$z_{\max}$, and $\infty$ only.
	
	Since $z^{1/3}$ is a local coordinate at $P_0$,
	and $P_0$ is at most a double pole, we have
	$e_j(z, \lambda_l(z)) \widetilde{e}_k(z,\lambda_l(z))
	= \mathcal{O}(z^{-2/3})$ as $z \to 0$
	for every $l =1,2,3$. Thus by \eqref{pairing}
	we have $\langle e_j, \widetilde{e}_k \rangle 
	= \mathcal{O}(z^{-2/3})$ as $z \to 0$,
	which means that the singularity at $z=0$
	is removable. Similarly, the singularities
	at $z_{\min}$, $z_{\max}$ and $\infty$ are removable,
	and it follows that $\langle e_j, \widetilde{e}_k \rangle$
	is a constant. 
	
	For $j \neq k$ we find from Table \ref{table1}, that 
	$e_j \widetilde{e}_k$ has a zero at either $P_0$ or $P_{\infty}$, and hence  $\langle e_j, \widetilde{e}_k \rangle = 0$ for $j \neq k$.
	
	It also follows that $e_j \widetilde{e}_j$ has no
	zero or pole at both $P_0$ and $P_{\infty}$. Then
	we can choose the multiplicative constant for $\widetilde{e}_j$ such that 
	 $e_j \widetilde{e}_j  = \frac{1}{3}$ at $P_0$.
	 Then we obtain from \eqref{pairing} that
	 $\langle e_j, \widetilde{e}_j \rangle = 1$ at $P_0$,
	 and since it is constant we obtain 
	  $\langle e_j, \widetilde{e}_j \rangle = 1$ everywhere.
	  
	  Hence \eqref{dualpair} holds, and we have shown
	  that $E(z)^{-1}$ indeed has the form \eqref{Einvdef}.
\end{proof}

\subsection{First transformation $Y \mapsto X$}

Following \cite{DK21} and \cite{KP24+}, we use $E$ in the first transformation $Y \mapsto X$ of the RH problem.

\begin{definition}\label{YtoX} We define
\begin{equation}  \label{Xdef}
 X(z) = Y(z) \begin{pmatrix} E(z) & 0_3 \\ 0_3 & E(z) \end{pmatrix}. 
\end{equation}
where $E(z)$ is given in \eqref{Edef}.
\end{definition}

Then $X$ satisfies the following RH problem. It is immediate
from \eqref{Xdef}, the RH problem~\ref{rhpforY} for $Y$ (with $B=C=1$ and we choose $\gamma = \mathbb T$), 
and the jump matrix \eqref{Ejump} for $E$.
\begin{rhproblem} \label{rhpforX} $X$ defined by \eqref{Xdef}
	satisfies the following.
	\begin{description}
		\item[RHP-X1] $X : \mathbb C \setminus \Sigma_X \to \mathbb C^{6 \times 6}$ is analytic where $\Sigma_X = \mathbb T \cup \mathbb R$. 
		\item[RHP-X2] $X_+ = X_- J_X$ on $\Sigma_X$ where
			\begin{align} \label{Xjump1} 
			J_X(z) &= 	\begin{pmatrix} I_3 & z^{-2N} \Lambda(z)^{2N} \\
			0_3 & I_3 \end{pmatrix}, \quad z \in \mathbb T, \\
			\label{Xjump2} 
			J_X(z) & = 
				\begin{pmatrix} J_E(z) & 0_3 \\ 0_3 & J_E(z) \end{pmatrix},
				\qquad z \in \mathbb R,			
		\end{align} 
		with $\Lambda(z) = \diag(\lambda_1(z), \lambda_2(z),\lambda_3(z))$, and $J_E$ is given by \eqref{Ejump}.
	\item[RHP-X3] As $z \to \infty$,
	\begin{equation} \label{Xasymp} 
		X(z) = \left(I_6 + \mathcal{O}(z^{-1}) \right) \begin{pmatrix} z^N E(z) & 0_3 \\
		0_3 & z^{-N} E(z) \end{pmatrix}. \end{equation}
	\item[RHP-X4] $X \begin{pmatrix} E^{-1} & 0_3 \\ 0_3 & E^{-1} \end{pmatrix}$
	remains bounded near each of
	the values $0$, $z_{\min}$, $z_{\max}$, and $z^*$,
	where $E^{-1}$ becomes unbounded. (See Lemma \ref{lemma53}
	and \eqref{zstar} for the special point $z^*$.)
	\end{description}
	\end{rhproblem}
The condition {\bf RHP-X4} is added 
in order to guarantee unique solvability of the RH problem.
Due to \eqref{Ydet}, \eqref{Edet}, \eqref{zstardef} and \eqref{Xdef} one has
\begin{align} \nonumber
	\det X(z)  & = \det Y(z) \left[\det E(z) \right]^2 \\
	& = w_{13}^4 w_{21}^2  
	(z-z^*)^2  \prod_{1\leq j < k \leq 3}
	(\lambda_k(z) - \lambda_j(z))^2,  \label{Xdet} 
\end{align}
which is a polynomial of degree $6$ with a double zero at $z^*$,
simple zeros at $z_{\min}$ and $z_{\max}$, and a double zero
at $0$. Thus $X$ is not invertible at these special points, and the
same is true for $E$. However, the singularities disappear
in the product $X \begin{pmatrix} E^{-1} & 0_3 \\ 0_3 & E^{-1}
\end{pmatrix}$ according to {\bf RHP-X4}.  
There will be a similar condition in the RH problems that follow.

The fact that $\det X$ is not identically one, presents
a number of complications in the analysis that follows. The construction
of the global parametrix will be done in  such a way that
after the final transformation we are back at a RH problem
whose solution has determinant identically one.

\begin{remark}
The upper right block in the jump matrix \eqref{Xjump1} on the unit circle
is in diagonal form. It means that only rows and columns 1-4, 2-5, and 3-6 are
connected via the jump on $\mathbb T$. We may alternatively write
\eqref{Xjump1} as 
\begin{multline} \label{Xjump3} J_X(z) \\
= \Pi \left[  \begin{pmatrix} 1 & \ds \frac{\lambda_1(z)^{2N}}{z^{2N}} \\ 0 & 1 \end{pmatrix} \oplus \begin{pmatrix} 1 & \ds \frac{\lambda_2(z)^{2N}}{z^{2N}} \\ 0 & 1 \end{pmatrix} \oplus \begin{pmatrix} 1 & \ds \frac{\lambda_3(z)^{2N}}{z^{2N}} \\ 0 & 1 \end{pmatrix}	 \right] \Pi^{-1}, \quad z \in \mathbb{T} \end{multline}
with the permutation matrix 
\begin{equation} \label{Pidef} \Pi = \begin{pmatrix} \begin{smallmatrix} 1 & 0 & 0 & 0 & 0 & 0 \\
	0 & 0 & 1 & 0 & 0 & 0 \\ 0 & 0 & 0 & 0 & 1 & 0 \\
	0 & 1 & 0 & 0 & 0 & 0 \\ 0 & 0 & 0 & 1 & 0 & 0 \\
	0 & 0 & 0 & 0 & 0 & 1  
	\end{smallmatrix} \end{pmatrix} \end{equation} 
associated with the permutation $(2 3 5 4)$. 
In \eqref{Xjump3} we use $\oplus$ to  denote the direct sum
of square matrices,
i.e, $A \oplus B \oplus C$ is the block diagonal matrix $\begin{pmatrix} 
	\begin{smallmatrix} A & 0 & 0 \\ 0 & B & 0 \\ 0 & 0 & C \end{smallmatrix} \end{pmatrix}$. We continue to use this notation in what follows.
\end{remark}

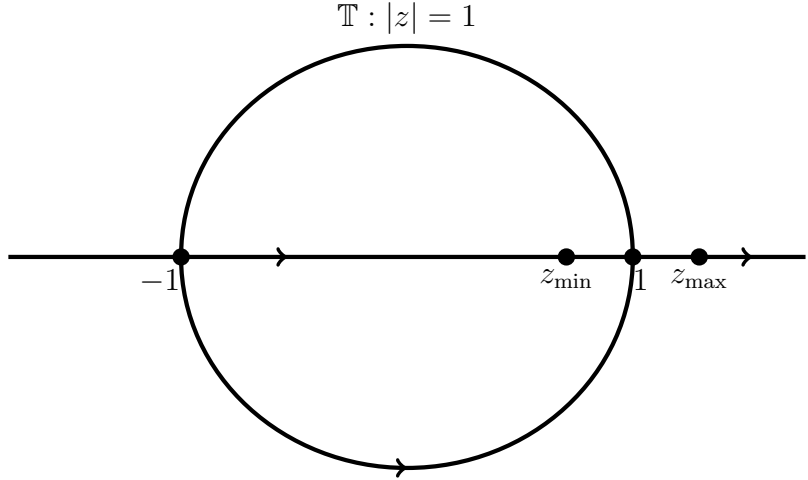
\begin{figure}
	\centering
	\begin{tikzpicture}[scale=3.5]
		\begin{scope}[ultra thick,decoration={
				markings,
				mark=at position 0.5 with {\arrow{>}}}
			] 
			
			\draw [postaction={decorate}] (-1.5,0)--(0.6,0);	
			\draw [postaction={decorate}] (1.1,0)--(1.5,0);
			\draw (0.6,0)--(1.1,0);
			
			\filldraw (0.6,0)  circle (0.7pt);	
			\filldraw (-0.85,0) circle (0.7pt);
			\filldraw  (1.1,0) circle (0.7pt);
			\filldraw (0.85,0) circle (0.7pt);
			\draw (0.6,0) node [below]{$z_{\min}$};
			\draw (1.1,0) node [below]{$z_{\max}$};
			\draw (-0.93,0) node [below]{$-1$};
			\draw (0.88,0) node [left, below]{$1$};
			\draw (0,0.8)  node[above]{$\mathbb T : |z|=1$};
			\coordinate (a) at (0.85,0);
			\coordinate (b) at (0,0.8);
			\coordinate (c) at (-0.85,0);
			\coordinate (d) at (0,-0.8);
			\coordinate (e) at (0.85,0);               
			\path[draw,use Hobby shortcut,closed=true]  (a)..(b)..(c)..(d)..(e); 
			\draw [postaction={decorate}] (0,-0.8) node{};
		\end{scope}
	\end{tikzpicture}
	
	\caption{Contour $\Sigma_X$ for the RH problem \ref{rhpforX} for $X$.
		\label{fig:SigmaX}} 
\end{figure}

\section{$g$-functions and the second transformation}
\label{section6}

In the second transformation we use the equilibrium measure $\mu$
in the external field, with  
the properties stated in Theorem \ref{theorem17}.

\subsection{The $g$-function}
\label{section61}
The equilibrium measure gives rise to a $g$-function,
an analytic function that will be such that $\Re g = - U^\mu$.
We define $g$ in terms of the complexified bipolar Green's kernel
that we denote by $G^{\mathbb C}_{P_{\infty}}$. 

Recall that we use the bipolar Green's kernel as in
Definition \ref{definition16} with the additional
normalization \eqref{bGfatPinfty2} at infinity.
Thus in view of \eqref{G0uv} and the
fact that $\mathcal A(P_\infty) \equiv \frac{1}{6}$,
we have with $u = \mathcal A(p)$, $v = \mathcal A(q)$,
\begin{align} 
	G_{P_{\infty}}(p,q) 
	& = \log \left| \frac{\theta_1(u- \frac{1}{6}) \theta_1(v- \frac{1}{6})}{\theta_1(u-v)} \right|- \frac{2\pi}{\Im \tau} (\Im u) (\Im v)
	- c_0. \label{Gdef2} 
\end{align}
The constant $c_0$ is such that \eqref{bGfatPinfty2} holds.

\begin{definition}
For $p \in \mathcal R \setminus \left( \mathbf{a} \cup \mathbf{b} \right)$,  $q \in {\mathbf{b}} = \Gamma_1 \cup \Gamma_2$, we define 
\begin{align}  \label{GCdef2} 
	G^{\mathbb C}_{P_{\infty}}(p,q) 
	& = \log \frac{\theta_1(u- \frac{1}{6}) \theta_1(v- \frac{1}{6})}{\theta_1(u-v)}- \frac{2\pi u}{\tau} (\Im v)
	- c_0, 
\end{align}
where $u = \mathcal{A}(p) \in (0,1) \times (0,\tau)$, $v  = \mathcal{A}(q) \in (0,\tau]$, 
and  the logarithm in \eqref{GCdef2} is 
to be interpreted as
\begin{align} \label{GCdef3} \log \theta_1(u- \tfrac{1}{6}) + \log \theta_1(v- \tfrac{1}{6})  - \log \theta_1(u-v) \end{align}
with principal branches of the logarithm.
\end{definition}
We emphasize that in \eqref{GCdef2} we use the
specific representatives of $\mathcal A(u)$
and $\mathcal A(v)$ in $\mathbb C \setminus L$
as indicated. The logarithms in \eqref{GCdef3}
are then well-defined, as none of 
$\theta_1(u-\frac{1}{6})$, $\theta_1(v-\frac{1}{6})$, 
and $\theta_1(u-v)$ can be real and negative.\footnote{We thank  Mateusz Piorkowski for this remark.}

We clearly have
\begin{equation} \label{GCdef1} 
	\Re G^{\mathbb{C}}_{P_{\infty}}(p,q) = G_{P_{\infty}}(p,q),
	\quad p \in \mathcal R \setminus (\mathbf{a} \cup \mathbf{b}),
		\quad q \in \mathbf{b}. 
	\end{equation}
 
\begin{definition}
We define 
\begin{equation} \label{gpdef} 
	\widehat{g}(p) = - 3\int G^{\mathbb C}_{P_{\infty}}(p,q) d\mu(q),
		\quad p \in \mathcal R \setminus (\mathbf{a} \cup \mathbf{b}). \end{equation}
Its restrictions to the various sheets are denoted by $\widehat{g}_1$, $\widehat{g}_2$, $\widehat{g}_3$,
and these are viewed as functions in the $z$-plane.
\end{definition}

The functions $\widehat{g}_j$ are closely related to, but not identical to,
the functions $g_j$ that appear in our main Theorems \ref{theorem13} and \ref{theorem14}. They differ by certain purely imaginary constants. See \eqref{gjdef} below for the definition of the functions $g_j$.

 The properties of the functions $\widehat{g}_j$
are collected in the following lemma.

\begin{lemma} \label{lemma63}
\begin{enumerate}
	\item[\rm (a)]  $\widehat{g}_1$ is defined and analytic in $\mathbb C \setminus
	(\mathbb R \cup \mathbb T)$,  $\widehat{g}_2$ is defined and analytic in $\mathbb C \setminus
	((-\infty, z_{\min}] \cup [z_{\max}, \infty) \cup \mathbb T)$,
	and  $\widehat{g}_3$ is defined and analytic in $\mathbb C \setminus
	((-\infty, z_{\min}] \cup [z_{\max}, \infty))$,
		\item[\rm (b)] on the real line we have the
	following jump conditions 
	\begin{align} \label{g1g2g3pm1} 
		\widehat{g}_{1,\pm} = \widehat{g}_{2,\mp}, & \quad \text{ on } (-\infty,0], \\
		\label{g1g2g3pm2} 
		\widehat{g}_{2,\pm} = \widehat{g}_{3,\mp}, & \quad \text{ on } [0,z_{\min}]\cup [z_{\max},\infty),
	\end{align}
	and for some integers $k_1, k_2 \in \mathbb Z$,
	\begin{align} 
		\label{g1g2g3pm3} 
		\widehat{g}_{3,+} - \widehat{g}_{3,-} =  \pi i + 6 \pi i k_1, & \quad \text{ on } (-\infty,0], \\
		\label{g1g2g3pm4} 
		\widehat{g}_{1,+} - \widehat{g}_{1,-} =  \pi i + 6 \pi i k_1, & \quad \text{ on } [0,1), \\
		\label{g1g2g3pm5} 
		\widehat{g}_{1,+} - \widehat{g}_{1,-} =  \pi i + 6 \pi i k_2, &  \quad \text{ on } (1,\infty), 
	\end{align}
	\item[\rm (c)] there are 
	$\ell_j^{\pm} \in i \mathbb R$	for $j=1,2,3$, such that
	\begin{equation} \label{hatgjasymp} 
	\widehat{g}_j(z) 
	=  \log z  + \ell_j^{\pm} + \mathcal{O}(z^{-1/3}), \quad \text{ as } z \to \infty
		\text{ with } \pm \Im z > 0, \end{equation} 
	where we use the principal branch of the logarithm,
	i.e., $\log z = \log |z| + i \arg z$, with $-\pi < \arg z < \pi$,
	and modulo $2 \pi i$ we have 
	\begin{equation} \label{elljpm} 
		\ell_1^+ = -\frac{\pi i}{2}, \, \ell_1^- = -\frac{3\pi i}{2}, \,
		\ell_2^+= \frac{\pi i}{2}, \, \ell_2^- = \frac{3\pi i}{2}, \,
		\ell_3^+ = \frac{3\pi i}{2}, \, \ell_3^- = \frac{\pi i}{2}, 
		\end{equation}
\item[\rm (d)] there is a constant $\widehat{\ell}$ with $\Re \widehat{\ell} = \ell$ (with 
$\ell$ as in \eqref{muELvarcon}) such that for $z \in \mathbb T$,
\begin{align} \label{g1ongamma}
	\widehat{g}_{1,+}(z) + \widehat{g}_{1,-}(z)  - 2 \log z + 2 \log \lambda_1(z)  & = -2 \widehat{\ell}, \\
	\label{g2ongamma}
	\widehat{g}_{2,+}(z) + \widehat{g}_{2,-}(z)  - 2 \log z + 2 \log \lambda_2(z) & = -2\widehat{\ell}, \\
	\label{g3ongamma} 
	\Re\left(2 \widehat{g}_3(z) -  2 \log z + 2 \log \lambda_3(z) \right) & < -2\Re \widehat{\ell} = -2 \ell.
\end{align}
\end{enumerate}
\end{lemma}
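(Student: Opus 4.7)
The plan is to work on the complex torus $\mathbb C / L$ via the Abel map $\mathcal A$ of Proposition \ref{prop24}, exploiting the explicit representation \eqref{GCdef2}--\eqref{GCdef3} of the complexified bipolar Green's kernel in terms of $\theta_1$. First I would verify that $\widehat g$ is well-defined and analytic on $\mathcal R \setminus (\mathbf a \cup \mathbf b)$: writing $u = \mathcal A(p)$ restricted to the interior of the fundamental domain and $v = \mathcal A(q) \in (0, \tau]$, each logarithmic term in \eqref{GCdef3} is single-valued there, so integration against the smooth positive density $d\mu$ (guaranteed by Theorem \ref{theorem14}(a)) yields an analytic function. The identity $\Re \widehat g = -U^\mu$ follows from \eqref{GCdef1} and \eqref{Gmudef}. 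Part (a) is then a matter of reading off, from Figure \ref{fig:sheets2} and the sheet structure, which branch cuts of the Riemann surface appear on each sheet's $z$-projection: sheet $3$ has no cut on $(-\infty,0]$ nor on the unit circle, whereas sheets $1$ and $2$ do.

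For part (b), the sheet-swap relations \eqref{g1g2g3pm1}--\eqref{g1g2g3pm2} are immediate from the single-valuedness of $\widehat g$ on $\mathcal R$ minus the cuts: approaching a point of $(-\infty,0]$ from above on sheet $1$ represents the same physical point of $\mathcal R$ as approaching from below on sheet $2$, and similarly for sheets $2$ and $3$ across the other real cuts. For the additive jumps \eqref{g1g2g3pm3}--\eqref{g1g2g3pm5}, I would use the quasi-periodicity \eqref{theta1period} of $\theta_1$ together with the fact that the $\mathbf a$-cycle image under $\mathcal A$ winds once around the horizontal direction of the torus. Because $\mu$ is a probability measure, the phase pickup in $-3 \log\theta_1(u-v)$ integrated against $d\mu(v)$ takes the form $\pi i + 6\pi i k$ for some $k \in \mathbb Z$; the integers may differ before and after the branch point at $z=1$ on sheet $1$ (where the $\mathbf b$-cycle is crossed), which accounts for the two integers $k_1$ and $k_2$ in \eqref{g1g2g3pm4}--\eqref{g1g2g3pm5}.

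For part (c), as $p \to P_\infty$ we have $\Re \widehat g(p) = -U^\mu(p) = \log |z(p)| + o(1)$ by \eqref{bGfatPinfty2} and the fact that $\mu$ is a probability measure. Writing $\widehat g_j(z) = \log z + \ell_j^\pm + o(1)$ with the principal branch of $\log z$, we must have $\ell_j^\pm \in i\mathbb R$. The specific values \eqref{elljpm} are obtained by approaching $P_\infty$ along each sheet from each half-plane and tracking the corresponding value of $u$ near $\mathcal A(P_\infty) \equiv \tfrac16$ in the fundamental domain: since $P_\infty$ is a triple branch point, the six approaches correspond to six distinct directions at $u = 1/6$, and at each the three logarithmic contributions in \eqref{GCdef3} take specific branch values determined via \eqref{theta1period} and \eqref{Abelvalues}.

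For part (d), the equalities \eqref{g1ongamma}--\eqref{g2ongamma} are the analytic enhancement of the Euler--Lagrange equality in Theorem \ref{theorem17}. Taking real parts of the left-hand sides and using $\Re \widehat g = -U^\mu$ together with \eqref{Vdef} yields $-2U^\mu + \Re V = -2\ell$ on $\Gamma_1 \cup \Gamma_2$, which is exactly \eqref{muELvarcon}. Moreover, the sheet-swap jumps from part (b) show that the combinations $\widehat g_{j,+} + \widehat g_{j,-} - 2\log z + 2\log \lambda_j$ for $j=1,2$ are boundary values of functions analytic across $\mathbb T$ whose real parts are the constant $-2\ell$; by the Cauchy--Riemann equations and connectedness, the imaginary parts are also constants, giving a common $\widehat \ell \in \mathbb C$ with $\Re \widehat \ell = \ell$. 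The strict inequality \eqref{g3ongamma} follows directly from the strict part of \eqref{muELvarcon} on $\Gamma_3$ together with $\Re \widehat g_3 = -U^\mu$ restricted to sheet $3$. The main obstacle is the bookkeeping of the additive imaginary constants in parts (b) and (c): tracking the branches of $\log\theta_1$ across the six approach directions to $P_\infty$ and along portions of the $\mathbf a$-cycle requires careful use of \eqref{theta1period} combined with the specific Abel-map values \eqref{Abelvalues}, and it is this step that fixes the table \eqref{elljpm} modulo $2\pi i$ rather than leaving it as arbitrary imaginary constants.
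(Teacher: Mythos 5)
Parts (a) and (b) of your proposal are essentially correct and follow the paper's route (noting the sheet structure, and using the quasi‐periodicity \eqref{theta1period} of $\theta_1$ when $u$ is shifted by the lattice vector $\tau$ — one minor imprecision is that crossing the $\mathbf a$-cycle corresponds to the shift $u \mapsto u+\tau$, not to ``winding once around the horizontal direction''). However, parts (c) and (d) contain genuine gaps.

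In part (c), you attribute the specific imaginary constants $\ell_j^\pm$ to ``specific branch values'' of the three logarithmic contributions in \eqref{GCdef3}. That is not where they come from. The paper's computation shows that, after isolating the logarithmic singularity $\log(-\theta_1(u-\tfrac16))$ (which is real) and the terms of order $\mathcal O(u-\tfrac16)$, the only purely imaginary constant surviving as $p\to P_\infty$ is $-\tfrac{\pi}{3\tau}(\Im v)$, coming from the non-theta term $-\tfrac{2\pi u}{\tau}(\Im v)$ in \eqref{GCdef2}. One then integrates this against $d\mathcal A_*(\mu)(v)$, and the essential input — which you omit entirely — is the symmetry of $\mu$ under the anti-holomorphic involution $(z,\lambda)\mapsto(\overline z,\overline\lambda)$, which forces $\int (\Im v)\, d\mathcal A_*(\mu)(v) = \tfrac12 \Im\tau$ and hence $\ell_1^+ = -\tfrac{\pi i}{2}$. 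The remaining five constants are then propagated via the jump relations from part (b), rather than by computing all six approach directions independently. Without the symmetry of the measure, your argument cannot produce the specific numbers in \eqref{elljpm}.

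In part (d), the justification that the imaginary part of $\widehat g_{j,+}+\widehat g_{j,-}-2\log z + 2\log\lambda_j$ is constant on $\mathbb T$ is wrong as stated. You claim this combination consists of ``boundary values of functions analytic across $\mathbb T$,'' invoking the sheet-swap jumps of part (b). But those jumps are on $\mathbb R$, not $\mathbb T$, and $\widehat g_1$ and $\widehat g_2$ have genuine cuts along $\mathbb T$ by part (a), so the combination is \emph{not} a boundary value of an analytic continuation across $\mathbb T$. Constancy of the real part (which follows from the equality in \eqref{muELvarcon}) does not imply constancy of the imaginary part; the additional, indispensable input is the $S$-property \eqref{GreenSprop}. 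The paper's argument relates the tangential derivatives of $\Im(\widehat g_\pm - \log z + \log\lambda_j)$ along $\Gamma_1\cup\Gamma_2$ to the two normal derivatives of the real part via the Cauchy--Riemann equations, accounting for the opposite orientations, and only the $S$-property forces those two tangential contributions to cancel. Your proposal never invokes the $S$-property, so this step has a missing idea, not merely a missing detail.
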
 
\begin{proof}
(a) follows immediately from the sheet structure of
	the Riemann surface and fact that $g$ is analytic
	in $\mathcal R \setminus (\mathbf{a} \cup \mathbf{b})$.
	This also implies the jump conditions \eqref{g1g2g3pm1}
	and \eqref{g1g2g3pm2} of item (c).

\medskip

(b) We already proved \eqref{g1g2g3pm1}
and \eqref{g1g2g3pm2}. To prove the other identities 
we take $p \in \mathbf{a} \setminus \{P_\infty, B_0\}$. Then
by \eqref{gpdef}
\begin{equation} \label{gplusgminus} \widehat{g}_+(p) - \widehat{g}_-(p) = 
	3\int \left( G_{P_{\infty}}^{\mathbb C}(p,q)_- - G_{P_{\infty}}^{\mathbb C}(p,q)_+ \right) d\mu(q). \end{equation}
We evaluate the integrand using \eqref{GCdef2}.
For $p \in \mathbf{a}$, the $+$-boundary value 
corresponds to $u = \mathcal A(p)_+ \in (0,1)$ and
the $-$-boundary value to $u + \tau$. Then if $q \in \mathbf{b}$ and 
$v = \mathcal A(q) \in (0,\tau)$, 
\begin{multline*} 
	G_{P_{\infty}}^{\mathbb C}(p,q)_- - G_{P_{\infty}}^{\mathbb C}(p,q)_+ \\
	= \log \frac{\theta_1(u +\tau - \frac{1}{6}) \theta_1(v - \frac{1}{6})}{\theta_1(u + \tau-v)} 
	- \log \frac{\theta_1(u  - \frac{1}{6}) \theta_1(v - \frac{1}{6})}{\theta_1(u -v)}
	- 2 \pi \Im v.  \end{multline*}
From the quasi-periodicity property \eqref{theta1period}, it then follows that
this is equal to $\frac{\pi i}{3} + 2\pi ik$, for some integer $k$.
By continuity, the integer $k$ does not depend on $q \in \mathbf{b}$
and thus by \eqref{gplusgminus}
\[ \widehat{g}_+(p) - \widehat{g}_-(p) =  \pi i + 6 \pi i k,
	\quad p \in \mathbf{a} \setminus \{P_{\infty}, B_0\}. \]
The constant $k$ could depend on $p \in \mathbf{a} \setminus \{P_\infty,B_0\}$, but by continuity, it is constant on each
of the two connected components of $\mathbf{a}  \setminus \{ P_\infty, B_0\}$.  This proves the identities \eqref{g1g2g3pm3}--\eqref{g1g2g3pm5}.

\medskip
	
(c)  From \eqref{GCdef2} and \eqref{gpdef} 
we see that $\Re \widehat{g}_j(z) =  \log |z| + \mathcal{O}(z^{-1/3})$ as $z \to \infty$ for every $j=1,2,3$. For the imaginary parts we pick up
certain constants depending on $j$, but also depending on whether
we go to infinity in the upper or lower half planes. We denote these
constants by $\ell_j^{\pm}$ and \eqref{hatgjasymp} follows.

Let $p$ be on the positive real axis of the first sheet of $\mathcal R$
with $z(p) > 1$.
Then $\mathcal A_+(p) \in (0,\frac{1}{6})$ and $\mathcal A_+(p)$ 
increases to $\frac{1}{6}$ as $z(p) \to \infty$. From \eqref{GCdef2}
we then get with $u = \mathcal A_+(p)$ and $v = \mathcal A(q)$,
\[ G_{P_\infty}^{\mathbb C}(p,q)_+ = \log (- \theta_1(u-\tfrac{1}{6}))
	+ \log \frac{\theta_1(v-\tfrac{1}{6})}{\theta_1(v-u)}
	- \frac{2 \pi (u-\tfrac{1}{6})}{\tau} (\Im v) - \frac{\pi}{3\tau} (\Im v) -c_0. 
	\] 
The first term on the right is real, the second and third terms are 
$\mathcal{O}(u- \frac{1}{6})$ as $u \to \frac{1}{6}$, uniformly for $v \in \mathbf{b}$ and the last term is a real constant.
The fourth term is purely imaginary and  contributes to
the constant $\ell_1^+$. Hence
\[ \ell_1^+ = - 3\lim_{p \to P_{\infty}} \int \Im G_{P_\infty}^{\mathbb C}(p,q)_+ d\mu(q)
=   \frac{\pi}{\tau} \int (\Im v) d \mathcal A_*(\mu) (v),   \]
where $\mathcal A_*(\mu)$ is the push-forward of $\mu$ under the Abel map. It is a probability measure on $[0,\tau]$
that is symmetric around $\frac{\tau}{2}$.
(This comes from the symmetry of $\mu$ under the
anti-holomorphic involution $(z,\lambda) \mapsto 
(\overline{z}, \overline{\lambda})$ on $\mathcal R$)
Hence  $\int (\Im v) d\mathcal A_*(\mu) (v) = \frac{\Im \tau}{2}$
and we obtain  $\ell_1^+ = -\frac{\pi i}{2}$.

From  \eqref{g1g2g3pm5} we have $\ell_1^+ - \ell_1^- = \pi i$
modulo $2\pi i$. Hence $\ell_1^- = - \frac{3\pi i}{2}$.
The
identities \eqref{g1g2g3pm1} and \eqref{g1g2g3pm2} imply that
$\ell_1^{\pm} \pm \pi i = \ell_2^{\mp} \mp \pi i$
and $\ell_2^{\pm} = \ell_3^{\mp}$.
The other expressions in \eqref{elljpm}
follow from  this and from the values we already have for 
$\ell_1^{\pm}$.

\medskip
(d) The real parts of the left-hand sides of \eqref{g1ongamma}-\eqref{g2ongamma} are equal to the constant $-2\ell$ because of the variational
	equality \eqref{muELvarcon} associated with the equilibrium problem
	for $\mu$. The imaginary part is constant because of the
	$S$-property \eqref{GreenSprop}.
	
	Indeed, the tangential derivative of 
	$\Im(\widehat{g}_- - \log z + \log \lambda)$ along $\Gamma_1 \cup \Gamma_2$
	is equal to one of the normal derivatives of its real part, by Cauchy Riemann equations.
	The tangential derivative of
	$\Im(\widehat{g}_+ - \log z + \log \lambda)$ along $\Gamma_1 \cup \Gamma_2$
	is equal to minus one times the other normal derivative, because
	of different orientation. Since the normal derivatives agree by \eqref{GreenSprop}, the two tangential derivatives add up
	to zero, and thus the imaginary part is constant indeed.
	
	The inequality in \eqref{g3ongamma} follows
	from the strict inequality in \eqref{muELvarcon}.
\end{proof}
In view of \eqref{hatgjasymp} we define the functions $g_j$
as follows.
\begin{definition} \label{definition64}
	We define for $j=1,2,3$,
\begin{equation} \label{gjdef}  g_j(z) = \widehat{g}_j(z) - \ell_j^{\pm} \quad \text{ for } \pm \Im z > 0, \end{equation}
where the constants and $\ell_j^{\pm}$ are
as in Lemma \ref{lemma63} (c).
\end{definition}
We note that $ g_j(z) = \log z + \mathcal{O}(z^{-1/3}) 
$  as $z \to \infty$
for every $j=1,2,3$. Other properties follows from 
the definition \eqref{gjdef} and the properties of the $\widehat{g}_j$ functions listed in Lemma \ref{lemma63}.

For example
\begin{equation} \label{g1jumps1}
\begin{aligned} 
	 g_{1,\pm}  = g_{2,\mp} + 2\pi i, & \quad \text{on } (-\infty,0], \\
	 g_{2,\pm}  = g_{3,\mp} + 2\pi i, & \quad \text{on } [0,z_{\min}] \cup [z_{\max},\infty). 
\end{aligned}
\end{equation}

\subsection{Second transformation $X \mapsto T$}

We use the functions $g_1, g_2, g_3$
from \eqref{gjdef}, and the constant $\widehat{\ell}$ from Lemma \ref{lemma63}  (d) in the definition of the second transformation
of the RH problem.
\begin{definition} \label{XtoT}  We define
	\begin{multline} \label{Tdef} 
	T = \diag \left(e^{2N \widehat{\ell}}, e^{2N \widehat{\ell}}, e^{2N \widehat{\ell}}, 1, 1,1 \right) \\
	\times  X   \diag \left( e^{-N g_1}, e^{-N g_2}, e^{-N g_3},
	e^{N g_1}, e^{N g_2}, e^{N g_3} \right) \\ 
	\times 
	\diag \left(e^{-2N \widehat{\ell}}, e^{-2N \widehat{\ell}}, e^{-2N \widehat{\ell}}, 1, 1, 1 \right). 
	\end{multline}
\end{definition}

Then $T$ is defined and analytic for $\mathbb C \setminus (\mathbb R \cup \mathbb T)$
with the following properties.
In \eqref{Tjump1} we use the following $\varphi$-functions
\begin{equation} \label{phijdef} 
	\varphi_j(z)  = -  \widehat{g}_j(z) +  \log z - \log \lambda_j - \widehat{\ell}, \quad j=1,2,3, \end{equation}
and we note that by \eqref{g1ongamma}--\eqref{g2ongamma}
\begin{equation} \label{phijongamma}
	\varphi_{j+}(z) + \varphi_{j-}(z) = 0, \quad z \in \mathbb T, \quad j=1,2.
	\end{equation}  

\begin{rhproblem} \label{rhpforT} $T$ satisfies the following.
	\begin{description} 
		\item[RHP-T1]
$T : \mathbb C \setminus \Sigma_T \to \mathbb C^{6 \times 6}$ 
is analytic, where $\Sigma_T =  \mathbb R \cup \mathbb T$,
\item[RHP-T2] $T_+ = T_- J_T$ on $\Sigma_T$ where $J_T$ is given  on the circle by
\begin{multline}
	\label{Tjump1} 
	J_T = \Pi \left[ \begin{pmatrix} e^{2N \varphi_{1+}} & (-1)^N \\
			0 & e^{2N \varphi_{1-}} \end{pmatrix} \oplus
			\begin{pmatrix}	e^{2N \varphi_{1+}} & (-1)^N \\
				0 & e^{2N \varphi_{1-}} \end{pmatrix} \oplus
			\begin{pmatrix} 1 & (-1)^N e^{-2N \varphi_3} \\
				0 & 1 \end{pmatrix} \right] \Pi^{-1}, \\
				\quad z \in \mathbb T,			
\end{multline}
with $\varphi_j$ for $j=1,2,3$ given by \eqref{phijdef}, and
permutation matrix $\Pi$ as in \eqref{Pidef},
while on the real line,
\begin{align} \label{Tjump2} 
	J_T = \begin{cases} J_E \oplus J_E, & \text{ on } (-\infty, z_{\min}) \cup (z_{\max},\infty), \\
	\diag\left(1,(-1)^N,(-1)^N,1,(-1)^N,(-1)^N\right), & \text{ on } [z_{\min},z_{\max}], 
	\end{cases} \end{align}
\item[RHP-T3] as $z \to \infty$  
\begin{multline} \label{Tasymp}
	T(z) = \left(I_6 + \mathcal{O}(z^{-1})\right)
		\begin{pmatrix} z^N E(z)  & 0_3 \\
			0_3 & z^{-N} E(z) \end{pmatrix} \\ \times
		\diag\left(e^{-N g_1(z)}, e^{-N g_2(z)}, e^{-N g_3(z)},
		e^{N g_1(z)}, e^{N g_2(z)}, e^{N g_3(z)} \right),
		 \end{multline}
\item[RHP-T4] $T \begin{pmatrix} E^{-1} & 0_3 \\ 0_3 & E^{-1} \end{pmatrix}$ remains bounded near $0$, $z_{\min}$, $z_{\max}$, and $z^*$.
\end{description} 
\end{rhproblem}
\begin{proof}
{\bf RHP-T1}  
	The analyticity of $T$ away from $\mathbb R \cup \mathbb T$ is clear. 

{\bf RHP-T2} 
Because of \eqref{Xjump3} and \eqref{Tdef}, the jump matrix $J_T$ on the circle $\mathbb T$ has the block form
\begin{equation} \label{JTcircle1} 
	J_T = \Pi \left[ A_1 \oplus A_2 \oplus A_3 \right] \Pi^{-1}
		\qquad \text{ on } \mathbb T, \end{equation}
with
\begin{align} \nonumber A_j & = \begin{pmatrix} e^{Ng_{j-} + 2 N \widehat{\ell}}
	& 0 \\ 0 & e^{-Ng_{j-}} \end{pmatrix}
	\begin{pmatrix} 1 & \frac{\lambda_j^{2N}}{z^{2N}} \\ 0 & 1 \end{pmatrix}
	\begin{pmatrix} e^{-Ng_{j-} - 2 N \widehat{\ell}}
	& 0 \\ 0 & e^{Ng_{j-}} \end{pmatrix} \\
	& =  \begin{pmatrix} e^{-N(g_{j+}-g_{j-})} & 
		e^{N(g_{j+}+g_{j-} +2\widehat{\ell})} \frac{\lambda_j^{2N}}{z^{2N}} \\
		0 & e^{N(g_{j+}-g_{j-})} \end{pmatrix}, \qquad j=1,2,3.
		\label{JTcircle2}
	\end{align}
	
The diagonal entries in \eqref{JTcircle2} are
equal to $e^{\mp N (\widehat{g}_{j+}-\widehat{g}_{j-})}$ 
since according to  \eqref{gjdef}, $g_j - \widehat{g}_j$
is constant in both the upper and lower half-planes.
The identities \eqref{g1ongamma}, \eqref{g2ongamma}
and the definition \eqref{phijdef}  then lead to
the diagonal entries $e^{2N \varphi_{j \pm}}$ for $j=1,2$,
while for $j=3$, it is $1$ as $\widehat{g}_{3+} = \widehat{g}_{3-}$ on $\mathbb T$.
Because of \eqref{gjdef} and the fact that $e^{-2 \ell_j^{\pm}} = -1$, see \eqref{elljpm}, the $12$ entry in \eqref{JTcircle2} is
equal to
\begin{equation} \label{JT12entries} (-1)^N e^{N \widehat{g}_{j+} + N \widehat{g}_{j-} + 2N\widehat{\ell}}\left(	\frac{\lambda_j}{z} \right)^{2N}, \quad j=1,2,3. \end{equation}
For $j=1,2$, \eqref{JT12entries} is equal to $(-1)^N$ because of \eqref{g1ongamma}
	and \eqref{g2ongamma}, while for $j=3$, we have
	$g_{3+} = g_{3,-}$ and  \eqref{JT12entries}
	is equal to	$(-1)^N e^{-2N \varphi_3}$ with $\varphi_3$ given by \eqref{phijdef}. Thus the matrices $A_j$ are indeed
	as given in \eqref{Tjump1}.
	
The jumps on the real line have the block diagonal form
\[ J_T = B_1 \oplus B_2 \]
with the $3 \times 3$ blocks, that
we calcuate using \eqref{Xjump2}, \eqref{Tdef}, and \eqref{gjdef}
\begin{align*} 
	B_1 & = \begin{pmatrix} \begin{smallmatrix} e^{N(\widehat{g}_{1-}-\ell_1^-)} & 0 & 0 \\
	0 & e^{N(\widehat{g}_{2-}-\ell_2^-)} & 0 \\ 0 & 0 & e^{N(\widehat{g}_{3-}-\ell_3^-)} \end{smallmatrix} \end{pmatrix} J_E
		\begin{pmatrix} \begin{smallmatrix} e^{-N(\widehat{g}_{1+}-\ell_1^+)} & 0 & 0 \\
			0 & e^{-N(\widehat{g}_{2+}-\ell_2^+)} & 0 \\ 0 & 0 & e^{-N(\widehat{g}_{3+}-\ell_3^+)} \end{smallmatrix} \end{pmatrix},  \\
	B_2 & = \begin{pmatrix} \begin{smallmatrix} e^{-N(\widehat{g}_{1-}-\ell_1^-)} & 0 & 0 \\
			0 & e^{-N(\widehat{g}_{2-}-\ell_2^-)} & 0 \\ 0 & 0 & e^{-N(\widehat{g}_{3-}-\ell_3^-)} \end{smallmatrix} \end{pmatrix} J_E
	\begin{pmatrix} \begin{smallmatrix} e^{N(\widehat{g}_{1+}-\ell_1^+)} & 0 & 0 \\
			0 & e^{N(\widehat{g}_{2+}-\ell_2^+)} & 0 \\ 0 & 0 & e^{N(\widehat{g}_{3+}-\ell_3^+)} \end{smallmatrix} \end{pmatrix}. \end{align*}
From the jump properties
\eqref{g1g2g3pm1}--\eqref{g1g2g3pm5} of the $g$-functions on the real line, and the values \eqref{elljpm}, 
we then find the jump matrices $J_T$ as in \eqref{Tjump2}.
Note that $e^{\ell_1^{\pm}} = e^{\ell_2^{\mp}} = e^{\ell_3^{\pm}}$ and $e^{\ell_j^+-\ell_j^-} = -1$ for $j=1,2,3$.
Observe that also
$e^{\pm  (\widehat{g}_{1,+}- \widehat{g}_{1,-})} = -1$ on $(0,1) \cup (1,\infty)$
and  $e^{\pm (\widehat{g}_{3,+}- \widehat{g}_{3,-})} = -1$ on $(-\infty,0)$.

{\bf RHP-T3}  
The asymptotics \eqref{Tasymp} is clear from
\eqref{Xasymp} and \eqref{Tdef}.

{\bf RHP-T4} Using  {\bf RHP-X4} and the definition \eqref{Tdef},
we see that it suffices to show
that $E D^{\pm N} E^{-1}$ remains bounded near 
$0$, $z_{\min}$, $z_{\max}$, and $z^*$, 
where
$D$ is the diagonal matrix 
\begin{equation} \label{Ddef} 
	D = \diag(e^{g_1}, e^{g_2}, e^{g_3}). \end{equation}

From \eqref{g1jumps1} it follows that
$J_E D_+ J_E = D_-$ on $(-\infty,0) \cup (0,z_{\min}) \cup (z_{\max}, \infty)$, which implies that
$(E D E^{-1})_+ = (E D E^{-1})_-$ on these intervals.
Thus $0$ is an isolated singularity of $EDE^{-1}$.
The entries of $E^{-1}(z)$ are $\mathcal{O}(z^{-2/3})$ as $z \to 0$, while $E$ and $D^{\pm 1}$ remain bounded near $0$.
Hence $0$ is a removable singularity, and $ED^{\pm N} E^{-1}$ remains bounded near $0$.

Due to \eqref{Edef}, \eqref{Einvdef}, and \eqref{Ddef}
the entries of $E D^{\pm N} E^{-1}$ are equal to
\begin{equation} \label{sumEDEinv} 
	\sum_{l=1}^3 e_j(z, \lambda_l(z)) e^{\pm N g_l(z)} 
	\widehat{e}_k(z, \lambda_l(z)),
		\qquad j,k=1,2,3. \end{equation}
At $Q^*$ there is a zero/pole cancellation in
the product $e_j \widehat{e}_k$, see Table \ref{table1}. 
Therefore
$e_j(z, \lambda_l(z)) \widehat{e}_k(z, \lambda_l(z))$ 
for $l=1,2,3$ remains bounded as $z \to z^*$. Also $e^{\pm N g_l(z)}$ remains bounded and therefore
$E D^{\pm N}E^{-1}$ remains bounded near $z^*$.

At $z_{\min}$ the term with $l=1$
in \eqref{sumEDEinv} remains bounded.
For $\pm \Im z > 0$ and $l=2,3$ we have $e^{g_l} = \mp i e^{\widehat{g}_l}$ by \eqref{gjdef} and \eqref{elljpm}. 
Hence the terms with $l=2,3$ in \eqref{sumEDEinv} add up to
\begin{equation} \label{sumEDEinv2} (- i)^N \sum_{l=2}^3  e_j(z, \lambda_l(z) e^{\pm N \widehat{g}(z, \lambda_l(z))} e_k(z,\lambda_l(z)), \quad \Im z > 0. \end{equation}
From the definition \eqref{gpdef} we recall that
$\widehat{g}$ is analytic in a neighborhood of $B_{\min}$,
as $B_{\min}$ is not on ${\bf a} \cup {\bf b}$. 
Then $e_j \, e^{\pm \widehat{g}}\, \widetilde{e}_k$ 
is an analytic function in a neighorhood of $B_{\min}$, 
except for a simple pole at	$B_{\min}$ due to
the factor $\widetilde{e}_k$, see Table \ref{table1}.
Then the sum \eqref{sumEDEinv2} is analytic 
in $z$ in a neighboorhood of $z_{\min}$ 
except for an isolated singularity at $z_{\min}$.
Since it behaves like $\mathcal{O}((z-z_{\min})^{-1/2})$
as $z \to z_{\min}$, the singularity is removable. 
It follows that \eqref{sumEDEinv2} and thus \eqref{sumEDEinv}
remains bounded as $z \to z_{\min}$ with $\Im z > 0$.
For $\Im z < 0$, there is only a possible sign of change in \eqref{sumEDEinv2} and we also find that 
\eqref{sumEDEinv} remains bounded as $z \to z_{\min}$
with $\Im z< 0$. This shows the boundedness
of $ED^{\pm N}E^{-1}$ near $z_{\min}$.

The reasoning for $z_{\max}$ is the same,
and we completed the proof of {\bf RHP-T4}.
\end{proof}

Note that $\Re \varphi_3 \geq c > 0$ on $\mathbb T$, for some $c > 0$,
due to \eqref{g3ongamma} and \eqref{phijdef}.
Therefore the $(3,6)$ entry in the jump matrix \eqref{Tjump1} is
exponentially small as $N \to \infty$. The other non-constant
entries in \eqref{Tjump1} are highly oscillatory 
as $N \to \infty$. 

\subsection{The asymptotic condition for $T$}

The asymptotic condition \eqref{Tasymp} is not suitable to work with later on,
and we are going to rewrite it.

\begin{lemma} \label{lemma67}
	There is a unit lower triangular matrix $L$
	(size $3 \times 3$) such that
	\begin{equation} \label{Tasymp2} T(z) = \left(I_6 + \mathcal{O}(z^{-1})\right)
		\begin{pmatrix} L^N E(z) & 0_3 \\ 0_3 & L^{-N} E(z) \end{pmatrix} \end{equation}
		as $z \to \infty$.
\end{lemma}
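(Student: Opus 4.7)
The target \eqref{Tasymp2} is equivalent to \eqref{Tasymp} provided one can show
\[
E(z) \diag\!\bigl(z^{\pm N} e^{\mp N g_j(z)}\bigr)_{j=1}^3 E(z)^{-1} = L^{\pm N} + \mathcal{O}(z^{-1}), \qquad z \to \infty,
\]
for some unit lower triangular matrix $L$; since the two blocks of the diagonal factor are mutually inverse, only one sign need be treated. Crucially, because diagonal matrices commute with themselves,
$E(z) \diag(z^N e^{-N g_j}) E(z)^{-1} = \Xi(z)^N$
with $\Xi(z) := E(z) \diag(z e^{-g_j(z)})_{j=1}^3 E(z)^{-1}$. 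Hence it suffices to prove $\Xi(z) = L + \mathcal{O}(z^{-1})$ at infinity with $L$ unit lower triangular, whereupon $\Xi^N = L^N + \mathcal{O}(z^{-1})$ by binomial expansion.

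I would analyze $\Xi$ entrywise. Using Definition \ref{def:E} and Lemma \ref{lemma54},
$\Xi_{ik}(z) = \sum_{l=1}^3 e_i(z,\lambda_l(z))\,\bigl(z e^{-g_l(z)}\bigr)\,\tilde e_k(z,\lambda_l(z))$,
a sum over the three preimages of $z$ under $\pi_z\colon \mathcal R \to \overline{\mathbb C}$. The key observation is that $p \mapsto z(p) e^{-g(p)}$ is a single-valued analytic function on a disk neighborhood of $P_\infty$ in $\mathcal R$, taking the value $1$ at $P_\infty$. Indeed, the sheet-dependent constants $\ell_j^\pm$ appearing in the definition $g_j = \hat g_j - \ell_j^\pm$ from \eqref{gjdef} cancel the leading-order constant in \eqref{hatgjasymp} on each sheet; moreover, the jumps of $\hat g$ across the $\mathbf{a}$-cycle passing through $P_\infty$ (Lemma \ref{lemma63}) are compensated modulo $2 \pi i$ by the explicit values \eqref{elljpm}, so $e^{-g}$ extends analytically across $\mathbf{a}$ near $P_\infty$, and the three sheet values fit together into a single power series in the local coordinate $w = z^{-1/3}$.

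Consequently $f_{ik}(p) := e_i(p) \tilde e_k(p) \cdot z(p) e^{-g(p)}$ is meromorphic near $P_\infty$ with zero/pole order equal to that of $e_i \tilde e_k$, namely $k-i$, by Table \ref{table1}. Parametrizing the three preimages of $z$ by $w_l = \omega^{l-1} z^{-1/3}$ with $\omega = e^{2\pi i/3}$, and using $\sum_{l=1}^3 \omega^{(l-1)n} = 3 \cdot \mathbf 1_{\{3 \mid n\}}$, one obtains
\[
\Xi_{ik}(z) = \sum_{n \geq k-i} a_n z^{-n/3} \sum_{l=1}^3 \omega^{(l-1)n} = 3 \sum_{\substack{n \geq k-i \\ 3 \mid n}} a_n\, z^{-n/3},
\]
where $a_n$ are the Laurent coefficients of $f_{ik}$ in $w$. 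Since $|k-i| \leq 2 < 3$, no positive power of $z$ survives, giving $\Xi_{ik}(z) = L_{ik} + \mathcal{O}(z^{-1})$ with $L_{ik} = 3 a_0\, \mathbf 1_{\{i \geq k\}}$. For $i < k$ the expansion of $e_i \tilde e_k$ starts at $w^{k-i}$ with $k-i \geq 1$, so $L_{ik} = 0$ and $L$ is lower triangular. For $i = k$, the pushforward of the identity $\langle e_i, \tilde e_i\rangle \equiv 1$ of Lemma \ref{lemma54} forces $(e_i \tilde e_i)(P_\infty) = 1/3$, giving $L_{ii} = 3 \cdot \tfrac{1}{3} \cdot 1 = 1$. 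Thus $L$ is unit lower triangular, completing the proof.

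The main technical obstacle is verifying the asserted analyticity of $z e^{-g}$ on a disk neighborhood of $P_\infty \in \mathcal R$: one must trace through the jump analysis of $\hat g$ in Lemma \ref{lemma63} and confirm that the constants \eqref{elljpm} absorb all multivaluedness at $P_\infty$ modulo $2\pi i$. Once this is granted, the rest of the argument is straightforward Laurent-series manipulation.
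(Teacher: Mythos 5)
Your proposal is correct, and it takes a genuinely different route from the paper. The paper expands $z^N E(z)\diag(e^{-Ng_j(z)})E(z)^{-1}$ directly, plugging in the explicit asymptotic series \eqref{Easymp}, \eqref{Einvasymp} for $E$, $E^{-1}$ together with \eqref{gexpansion}, tracking all powers of $z^{-1/3}$, identifying the strictly lower triangular matrices $L_1,L_2$, and arriving at the explicit formula $L=\exp(-a_1 L_1 - a_2 L_1^2)$. Your central trick is the $N$-independent factorization $E\diag(z^{\pm N}e^{\mp Ng_j})E^{-1}=\Xi^{\pm N}$ with $\Xi(z)=E(z)\diag(ze^{-g_j(z)})E(z)^{-1}$, which collapses the problem to the $N=1$ case: once $\Xi(z)=L+\mathcal O(z^{-1})$ is established, $\Xi^{\pm N}=L^{\pm N}+\mathcal O(z^{-1})$ for fixed $N$ is automatic. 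You then identify the constant term abstractly from the meromorphic structure of $e_i\widetilde e_k$ (pole/zero order $k-i$ at $P_\infty$ from Table \ref{table1}) and the roots-of-unity filter $\sum_l\omega^{(l-1)n}=3\cdot\mathbf 1_{\{3\mid n\}}$, and the diagonal entries come out as $1$ by passing $\langle e_i,\widetilde e_i\rangle\equiv 1$ to the limit $z\to\infty$. This is conceptually cleaner and avoids the explicit matrix bookkeeping, at the cost of not producing a formula for $L$; since the paper only uses that $L$ is unit lower triangular and that $L^N G^N=z^N I_3+\mathcal O(z^{N-1})$, nothing is lost.

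The one point that deserves more care than your sketch gives it — though it is present in the paper as well, in the paragraph preceding \eqref{gexpansion} — is the single-valuedness of $z(p)e^{-g(p)}$ on a punctured neighborhood of $P_\infty$ in $\mathcal R$, and removability of the singularity at $P_\infty$. The removability is straightforward: $g_j(z)-\log z=\mathcal O(z^{-1/3})$ so $z e^{-g_j}\to 1$ on each sheet. The single-valuedness across the $\mathbf a$-cycle near $P_\infty$ requires checking that $g_{j,+}-g_{j,-}\in 2\pi i\mathbb Z$ there, which follows from $\widehat g_{j,+}-\widehat g_{j,-}\in\pi i+2\pi i\mathbb Z$ (Lemma \ref{lemma63}(b)) together with $\ell_j^+-\ell_j^-\equiv\pi i\ (\mathrm{mod}\ 2\pi i)$ from \eqref{elljpm}; you flag this as the main technical obstacle and your sketch of it is accurate. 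Once granted, your Laurent-coefficient bookkeeping is correct: for $i\geq k$ the smallest surviving exponent is $n=0$, giving $L_{ik}=3a_0^{(i,k)}$ with $L_{ii}=3\cdot\frac13=1$; for $i<k$ the smallest multiple of $3$ that is $\geq k-i\in\{1,2\}$ is $3$, so $L_{ik}=0$.
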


\begin{proof}
Because of \eqref{gjdef} and \eqref{hatgjasymp} one has 
$g_j(z) = \log z +  \mathcal{O}(z^{-1/3})$ as $z \to \infty$,
and so $z^N$ and $z^{-N}$ terms in \eqref{Tasymp} gets cancelled.

We should be more precise here. The functions $g_j - \log z$ are
the restrictions of a single function on the Riemann surface to the various sheets. 
That function is analytic near $P_{\infty}$
with a simple zero at $P_{\infty}$. It is also real-valued on
the positive real axis on the first sheet. Since $z^{-1/3}$ is 
a local coordinate at $P_{\infty}$ we find for some real
coefficients $a_j$,
\begin{equation} \label{gexpansion}
\begin{aligned}
	g_1(z) - \log z & = \sum_{j=1}^{\infty} a_j z^{-j/3}, \\
	g_2(z) - \log z & = \sum_{j=1}^{\infty} a_j \omega^{\pm j}  z^{-j/3}, \\
	g_3(z) - \log z & = \sum_{j=1}^{\infty} a_j \omega^{\mp j} z^{-j/3}, 
\end{aligned}
\end{equation}
as $z \to \infty$ with $\pm \Im z > 0$, where $\omega = e^{2\pi i/3}$ (as before). 

We use \eqref{gexpansion}  to obtain  the following expansion 
\begin{multline} \label{EgEinvexpansion}
	 z^N E(z) \diag\left(e^{-Ng_1(z)},  e^{-Ng_2(z)}, e^{-Ng_3(z)} \right) E(z)^{-1} \\
= I_3 - a_1 N z^{-1/3}
	E(z) \begin{pmatrix} 1 & 0 & 0 \\
		0 & \omega^{\pm 1} & 0 \\
		0 & 0  & \omega^{\mp 1} \end{pmatrix}
		E(z)^{-1} \\ 
	+ (\tfrac{a_1^2}{2} N^2 - a_2 N) z^{-2/3}
		E(z) \begin{pmatrix} 1 & 0 & 0 \\
		0 & \omega^{\mp 1} & 0 \\
		0 & 0  & \omega^{\pm 1} \end{pmatrix}
	E(z)^{-1}  \\
	(-\tfrac{a_1^3}{6} N^3 + a_1 a_2 N^2 -a_3 N) z^{-1} I_3 	+ \cdots 
\end{multline}

Using the expansions \eqref{Easymp} and \eqref{Einvasymp}
for $E(z)$ and $E(z)^{-1}$, one finds that each term
in \eqref{EgEinvexpansion} has a Laurent
series around infinity (i.e., all fractional exponents disappear). 
The terms with $z^{-1/3}$ and $z^{-2/3}$
in \eqref{EgEinvexpansion} contribute also to leading order since
\begin{align}  \label{L1asymp}
	z^{-1/3} E(z) \begin{pmatrix} 1 & 0 & 0 \\
	0 & \omega^{\pm 1} & 0 \\
	0 & 0  & \omega^{\mp 1} \end{pmatrix}
E(z)^{-1} 
	& = L_1 + \mathcal{O}(z^{-1})  \\ \label{L2asymp}
	z^{-2/3} E(z) \begin{pmatrix} 1 & 0 & 0 \\
		0 & \omega^{\pm 1} & 0 \\
		0 & 0  & \omega^{\mp 1} \end{pmatrix}
	E(z)^{-1} 
	& = L_2 + \mathcal{O}(z^{-1})  \end{align}
with strictly lower triangular matrices
\begin{align} \label{L1L2def}
	L_1 & = 
	\begin{pmatrix} 0 &  0 & 0 \\ \frac{w_{21}}{c_\lambda} & 0 & 0 \\ 
		\frac{\frac{2}{3} c_\lambda^3 - w_{12} w_{21}}{w_{13} c_\lambda}
		- \frac{w_{23} c_\lambda^2}{w_{13}^2w_{21}} & \frac{c_\lambda^2}{w_{13}w_{21}} & 0  \end{pmatrix} &
	L_2 & = L_1^2 = \begin{pmatrix} 0 & 0 & 0 \\ 0 & 0 & 0 \\ \frac{c_\lambda}{w_{13}} & 0 & 0  \end{pmatrix}.
	\end{align}
All other terms in \eqref{EgEinvexpansion} are
$\mathcal O(z^{-1})$ as $z \to \infty$.

Then
\begin{equation} \label{Ldef}
	L = \exp(-a_1 L_1 - a_2 L_1^2)\end{equation}
is unit lower triangular.  
It follows that \eqref{EgEinvexpansion} can be written as
\begin{multline} \label{EgEinvexpansion2}
	z^N E(z) \diag\left(e^{-Ng_1(z)},  e^{-Ng_2(z)}, e^{-Ng_3(z)} \right) E(z)^{-1} \\
	= I_3 - a_1N L_1 + \frac{a_1^2}{2} N^2 L_1^2
		- a_2 N L_1^2 + \mathcal{O}(z^{-1}) \\
	= 	L^N + \mathcal{O}(z^{-1}) \quad \text{ as } z \to \infty.
\end{multline}
Similarly (we can just replace $N$ by $-N$), 
	\begin{equation}  z^{-N} E(z) \diag\left(e^{Ng_1(z)}, e^{Ng_2(z)},  e^{Ng_3(z)} \right) E(z)^{-1} 
		= L^{-N} 	
		+ \mathcal{O}(z^{-1}).
	\end{equation}
Then \eqref{Tasymp} leads to \eqref{Tasymp2} and the lemma follows.
\end{proof}

\section{Third transformation $T \mapsto S$}

The opening of lenses $T \mapsto S$ is standard.  We open an
annular lens around $\mathbb T$, that
we choose to be bounded by the circles $|z| = 1 \pm \eta$ for some small $\eta >0$. We take it such that
\[ z_{\min} < 1 - \eta < 1 + \eta < z_{\max}. \]

\begin{definition} \label{TtoS}
	We define
	\begin{align} \nonumber
		S & =  
		\begin{pmatrix} L^{-N} & 0_3 \\
			0_3 & L^N \end{pmatrix} T \\  \label{Sdef}
			& \quad
			\times \begin{cases} 
			\Pi \left[ \begin{pmatrix} 1 & 0 \\ (-1)^N e^{2N\varphi_1} & 1 \end{pmatrix} \oplus
			\begin{pmatrix} 1 & 0 \\ (-1)^N e^{2N\varphi_2} & 1 \end{pmatrix}  \oplus I_2  \right] \Pi^{-1}, \\
			\hfill\text{ for } 1-\eta < |z| < 1, \, \Im z \neq 0, \\
		\Pi \left[ \begin{pmatrix} 1 & 0 \\ -(-1)^N e^{2N\varphi_1} & 1 \end{pmatrix} \oplus
		\begin{pmatrix} 1 & 0 \\ -(-1)^N e^{2N\varphi_2} & 1 \end{pmatrix}  \oplus I_2  \right] \Pi^{-1}, \\
		 \hfill\text{ for } 1 < |z| < 1+\eta, \, \Im z \neq 0, \\
			I_6 \quad \text{ elsewhere in $\mathbb C \setminus \Sigma_T$.}
			\end{cases} \end{align}
\end{definition}

Note that we also premultiplied $T$ throughout
with the constant matrix $\begin{pmatrix} L^{-N} & 0_3 \\
	0_3 & L^N \end{pmatrix}$. This does not
	affect any of the jumps, but it will simplify
	the asymptotic behavior of $S$ due to  Lemma \ref{lemma67}.

\begin{figure}
	\centering
	\begin{tikzpicture}[scale=3]
		\begin{scope}[ultra thick,decoration={
				markings,
				mark=at position 0.5 with {\arrow{>}}}
			] 
			
			\draw [postaction={decorate}] (-1.5,0)--(0.6,0);	
			\draw (0.6,0)--(1.1,0);
			\draw [postaction={decorate}] (1.1,0)--(1.5,0);
			
			\filldraw (0.6,0)  circle (0.7pt);	
			\filldraw (-0.85,0) circle (0.7pt);
			\filldraw  (1.1,0) circle (0.7pt);
			\filldraw (0.85,0) circle (0.7pt);
			\draw (0.6,0) node [below]{$z_{\min}$};
			\draw (1.1,0) node [below]{$z_{\max}$};
			\draw (-0.93,0) node [below]{$-1$};
			\draw (0.88,0) node [left, below]{$1$};
			\draw (0,0.74)  node[above]{$\mathbb T$};
			\coordinate (a) at (0.85,0);
			\coordinate (b) at (0,0.8);
			\coordinate (c) at (-0.85,0);
			\coordinate (d) at (0,-0.8);
			\coordinate (e) at (0.85,0);             
			\path[draw,use Hobby shortcut,closed=true]  (a)..(b)..(c)..(d)..(e); 
			\draw (0,0.5)  node[above]{$|z|=1-\eta$};
			\coordinate (a) at (0.75,0);
			\coordinate (b) at (0,0.7);
			\coordinate (c) at (-0.75,0);
			\coordinate (d) at (0,-0.7);
			\coordinate (e) at (0.75,0);               
			\path[draw,use Hobby shortcut,closed=true]  (a)..(b)..(c)..(d)..(e); 	
			\draw (0,0.88)  node[above]{$|z|=1+\eta$};
			\coordinate (a) at (0.95,0);
			\coordinate (b) at (0,0.9);
			\coordinate (c) at (-0.95,0);
			\coordinate (d) at (0,-0.9);
			\coordinate (e) at (0.95,0);               
			\path[draw,use Hobby shortcut,closed=true]  (a)..(b)..(c)..(d)..(e); 
			\draw [postaction={decorate}] (0,-0.7) node{};
			\draw [postaction={decorate}] (0,-0.8) node{};
			\draw [postaction={decorate}] (0,-0.9) node{};
		\end{scope}
	\end{tikzpicture}
	
	\caption{Contour $\Sigma_S = \mathbb R \cup \{ |z|  = 1\} \cup \{|z| = 1\pm \eta \}$ for the RH problem \ref{rhpforS} for $S$.
		\label{fig:SigmaS}} 
\end{figure}


\begin{rhproblem} \label{rhpforS} $S$ satisfies the following
	RH problem.
	\begin{description} \item[RHP-S1]
		$S : \mathbb C \setminus \Sigma_S \to \mathbb C^{6 \times 6}$ 
		is analytic, where $\Sigma_S = \mathbb R \cup \mathbb T \cup \{ |z| = 1 \pm \eta \}$. 
		\item[RHP-S2] $S_+ = S_-J_S$ on $\Sigma_S$ where
		\begin{align} \label{Sjump1} 
		J_S & = \Pi \left[ \begin{pmatrix} 0 & (-1)^N \\ -(-1)^N & 0 \end{pmatrix} \oplus \begin{pmatrix} 0 & (-1)^N \\ -(-1)^N & 0 \end{pmatrix} \oplus \begin{pmatrix} 1 & (-1)^N e^{-2N \varphi_3} \\
			0 & 1 \end{pmatrix} \right] \Pi^{-1} 
		\end{align} on $\mathbb T$, and
		\begin{align}
		 \label{Sjump2} 
		J_S & = \Pi \left[ \begin{pmatrix} 1 & 0 \\ (-1)^N e^{2N\varphi_1} & 1 \end{pmatrix} \oplus \begin{pmatrix} 1 & 0 \\ (-1)^N e^{2N\varphi_2} & 1 \end{pmatrix} \oplus I_3 \right] \Pi^{-1},
		\quad |z| = 1 \pm \eta, \\
			 \label{Sjump3}
		J_S & = 
			J_E \oplus J_E \quad \text{ on } (-\infty,z_{\min}) \cup (z_{\max},\infty) \\
		J_S & = \label{Sjump4}
			\diag\left(1,(-1)^N, (-1)^N, 1, (-1)^N, (-1)^N \right)
			\quad \text{ on } [z_{\min}, z_{\max}].  \end{align}
		\item[RHP-S3] As $z \to \infty$  
		\begin{align} \label{Sasymp}	
			S(z) = \left(I_6 + \mathcal{O}(z^{-1})\right)
		\begin{pmatrix} E(z) & 0_3 \\ 0_3 &  E(z) \end{pmatrix}.
		\end{align}
	\item[RHP-S4] $S \begin{pmatrix} E^{-1} & 0_3 \\ 0_3 & E^{-1} \end{pmatrix}$ 
	remains bounded near $0$, $z_{\min}$, $z_{\max}$, and $z^*$.
	\end{description} 
\end{rhproblem}

\begin{proof}
	{\bf RHP-S1} 
	The analyticity of $S$ is clear.
	
	{\bf RHP-S2}  
	The jump matrix \eqref{Sjump1} on $\mathbb T$ comes from the following factorization of the $2 \times 2$ matrices that appear
	in the jump matrix \eqref{Tjump1}. For $j=1,2$ we have
\[ \begin{pmatrix} e^{2N \varphi_{j+}} & (-1)^N \\
		0 & e^{2N \varphi_{j-}} \end{pmatrix}
		= \begin{pmatrix} 1 & 0 \\
			(-1)^N e^{2N \varphi_{j-}}	& 1 \end{pmatrix}  
		\begin{pmatrix} 0 & (-1)^N \\ -(-1)^N & 0 \end{pmatrix}
		\begin{pmatrix} 1 & 0 \\
			(-1)^N e^{2N \varphi_{j+}}	& 1 \end{pmatrix}.
		\]	
	The jump matrix \eqref{Sjump2} on $|z| = 1\pm \eta$ are immediate
	from the definition \eqref{Sdef}, while the jump matrices on
	$\mathbb R$ remain unchanged if one goes from $T$ to $S$.
	This requires a little calculation on the
	interval $(-1-\eta, -1 + \eta)$ that is inside
	the annulus $1-\eta < |z| < 1+\eta$. 
	Here one has to use the  
	property that $\varphi_{1,\pm} - \varphi_{2,\mp} \in 2\pi i \mathbb Z$, which follows from the definition \eqref{phijdef}
	and the fact that $\widehat{g}_{1,\pm} = \widehat{g}_{2,\mp}$ and
	$\lambda_{1,\pm} = \lambda_{2,\mp}$ there.

	{\bf RHP-S3}  The asymptotic condition \eqref{Sasymp} follows
	from \eqref{Tasymp2} and \eqref{Sdef}.
	
	{\bf RHP-S4} 
	The statement about the boundedness near $0, z_{\min}$ and $z_{\max}$ is immediate from {\bf RHP-T4},
	as these points are outside the lens.
	The boundedness near $z^*$ is also immediate if $z^*$ is outside the lens. If $z^*$ is inside the lens,
	then given {\bf RHP-T4} and the definition \eqref{Sdef},
	it comes down to proving that
	$E 
		\diag \left(e^{2N \varphi_1}, e^{2N \varphi_2}, 0
		\right)  E^{-1}$ 
		remains bounded near $z^*$. 
	This follows in the same way as we showed
	the boundedness near $z^*$ in {\bf RHP-T4} in
	the RH problem for $T$, since $\varphi_1$ and $\varphi_2$
	remain bounded.
\end{proof}

\begin{lemma} \label{lemma73} 
	For $\eta > 0$ sufficiently small, we can find
	a constant $c > 0$ such that 
	$2\Re \varphi_1 < -c$ and $2\Re \varphi_2  < -c$ on
	the circles $|z| = 1 \pm \eta$.
\end{lemma}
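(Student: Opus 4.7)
The plan is to reduce the desired estimate to the sign analysis of the $\mathcal R$-wide harmonic function $h = 2U^{\mu}+\Re V - \ell$ studied in Lemma~\ref{lemma42}. First I would verify the algebraic identity $2\Re\varphi_j(z) = h((z,\lambda_j(z)))$ for $z$ on sheet $j$: this follows by substituting $\Re\widehat g = -U^{\mu}$ (a direct consequence of \eqref{GCdef1}, \eqref{gpdef}, and \eqref{Gmudef}), $\Re V = 2\log|z|-2\log|\lambda|$, and $\Re\widehat\ell = \ell$ into the definition \eqref{phijdef}. The additive constant that could a priori appear is pinned to zero by comparing both sides on $\mathbb T$: there $2\Re\varphi_j = 0$ for $j=1,2$ by \eqref{phijongamma} together with continuity of $\Re\varphi_j$ across $\mathbb T$ (the logarithmic potential of $\mu$ is continuous across its support since $\mu$ has a real-analytic density), while $h=0$ on $\mathcal A(\Gamma_1\cup\Gamma_2)\equiv\{\Re u = 0\}$ by Lemma~\ref{lemma42}(a) and Proposition~\ref{prop24}(b).

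The second, geometric, step: by Lemma~\ref{lemma42}, writing $u = \mathcal A(p)$ in the fundamental domain, $h(p) < 0$ exactly when $\Re u \in (0,\tfrac13)\cup(\tfrac23,1)$. Since $\mathcal A$ is a conformal bijection $\mathcal R \to \mathbb C/L$ sending $\Gamma_1\cup\Gamma_2$ to $\{\Re u = 0\}$, for any fixed $\epsilon_0\in(0,\tfrac13)$ the preimage $\mathcal N := \mathcal A^{-1}(\{|\Re u|<\epsilon_0\}\bmod L)$ is an open tubular neighborhood of $\Gamma_1\cup\Gamma_2$ in $\mathcal R$ on which $h<0$ strictly off the cycle.

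Third, for $\eta > 0$ small enough the complete lift to $\mathcal R$ of the circle $\{|z|=1\pm\eta\}$ is a compact subset of $\mathcal N$ disjoint from $\Gamma_1\cup\Gamma_2$. The reason is that the branch points enclosed by $\mathbb T$ are only $0$ and $z_{\min}$, whose local monodromies compose to the transposition swapping sheets $1$ and $2$ (and fixing sheet $3$); so the lift starting on sheet $1$ closes after two traversals of the circle in the $z$-plane, alternating between sheets $1$ and $2$, and it Hausdorff-converges on $\mathcal R$ to the $\mathbf b$-cycle $\Gamma_1\cup\Gamma_2$ as $\eta\to 0$. Compactness of the lift combined with continuity of $h$ then yields a constant $c>0$ with $h \le -2c$ pointwise on the lift. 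Since both $(z,\lambda_1(z))$ and $(z,\lambda_2(z))$ sit on this lifted cycle for every $z$ with $|z|=1\pm\eta$, we conclude $2\Re\varphi_j(z) < -c$ on these circles for $j=1,2$.

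The main subtlety I anticipate is the treatment of the points where the circles $|z| = 1\pm\eta$ cross the real axis and meet the branch cuts of sheet $j$, so that each $\varphi_j$ has two boundary values there. Passing to $\mathcal R$ handles this automatically: both boundary values are just the values of the single continuous function $h$ at two nearby points of the same lifted cycle, still inside $\mathcal N$, so the bound applies uniformly on both sides.
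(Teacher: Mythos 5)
Your proof is correct and follows essentially the same route as the paper: identify $2\Re\varphi_j$ (for $j=1,2$) with the restriction to the $j$th sheet of the function $h=2U^\mu+\Re V-\ell$ from Lemma~\ref{lemma42}, invoke the sign dichotomy of that lemma, and then argue that the lifted circles $|z|=1\pm\eta$ lie, for small $\eta$, in the region $\{0<\Re\mathcal{A}<\tfrac13\}\cup\{\tfrac23<\Re\mathcal{A}<1\}$ where $h<0$, so that compactness gives a uniform negative bound. You are in fact somewhat more careful than the paper at two points: you explain why the additive constant drops out (both $2\Re\varphi_j$ and $h$ vanish on $\mathbb T$, which forces $\ell=0$ with the paper's normalization of the bipolar Green's kernel — the paper's displayed identity in its proof contains a typo in the coefficient and the factor, and silently absorbs this), and you justify geometrically, via the monodromy of the circle (a 3-cycle at $0$ composed with the transposition $(23)$ at $z_{\min}$ giving $(12)$) and Hausdorff convergence, why the lifted cycle through sheets~1 and~2 lands in the correct tubular neighbourhood — the paper simply asserts that $\eta$ can be chosen small enough. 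Your remark at the end about the real-axis crossings being handled automatically on $\mathcal R$ is exactly right.
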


\begin{proof}
	By \eqref{phijdef} and $\Re \widehat{\ell} = \ell$, we have for $j=1,2$,
	\[ \Re \varphi_j = -3 \Re \widehat{g}_j + \frac{\Re V}{2} - \ell. \]
	This is the restriction to the $j$th sheet of the function $h$
	that was used in the proof of Theorem \ref{theorem13},
	see \eqref{hdef}. It was shown at the end of the proof
	of Theorem \ref{theorem13} (b), that $h(p) < 0$ 
	if $0 < \Re \mathcal A(p) < \frac{1}{3}$ or $\frac{2}{3} < \Re \mathcal A(p) < 1$, where $\mathcal A$ is the Abel map.  
	
	We choose $\eta$ small enough so that points $p$ on the first and second
	sheets with $1 < |z(p)| \leq 1 + \eta$ are mapped by the Abel map into
	$0 < \Re u < \frac{1}{3}$ and points on the first and second
	sheets with $1- \eta \leq |z(p)| < 1$ ar mapped into $\frac{2}{3} < \Re u < 1$,
	and in both cases $h(p) < 0$. Then by compactness, we can find
	a constant $c >0$ as in the lemma.
\end{proof}

\section{Global parametrix} \label{section8}

There is no need for local parametrices. But we do need to find a
global parametrix.

\subsection{Statement of the RH problem} \label{section81} 
We ignore the exponentially small
entries in the jump matrices \eqref{Sjump1}-\eqref{Sjump2} for $S$ and we look for $M$ satisfying
the following RH problem.

\begin{rhproblem} \label{rhpforM} \
	\begin{description} \item[RHP-M1]
		$M : \mathbb C \setminus \Sigma_M \to \mathbb C^{6 \times 6}$ 
		is analytic, where $\Sigma_M = \mathbb R \cup \mathbb T$.
		\item[RHP-M2] $M_+ = M_- J_M$ on $\Sigma_M$ where
		\begin{align} \label{Mjump1} 
		J_M & = \Pi \left[ \begin{pmatrix} 0 & (-1)^N \\ -(-1)^N & 0 \end{pmatrix} \oplus \begin{pmatrix} 0 & (-1)^N \\ -(-1)^N & 0 \end{pmatrix} \oplus I_2 \right] \Pi^{-1}, \quad \text{on } \mathbb T,  \\
			\label{Mjump2}
			J_M & =
			J_S = \begin{cases} J_E \oplus J_E, 
				& \hspace*{-2.2cm} \text{on } (-\infty,z_{\min}) \cup (z_{\max},\infty), \\
			\diag (1, (-1)^N, (-1)^N, 1, (-1)^N, (-1)^N),
			& \text{on } (z_{\min}, z_{\max}).
			\end{cases}
		\end{align}
		
		\item[RHP-M3] As $z \to \infty$  
		\begin{align} \label{Masymp}
			M(z) = \left(I_6 + \mathcal{O}(z^{-1}) \right)
			\begin{pmatrix} E(z) & 0_3 \\
				0_3 & E(z)  \end{pmatrix}
	 \end{align}	 
	 \item[RHP-M4] $M \begin{pmatrix} E^{-1} & 0_3 \\ 0_3 & E^{-1} \end{pmatrix}$ remains bounded near $0$, $z_{\min}$, $z_{\max}$ and $z^*$.	
		\end{description} 
\end{rhproblem}

Let us show that the solution is unique (if it exists).
Suppose $M$ solves the RH problem \ref{rhpforM} and put
\begin{equation} \label{Mhatdef} 
	\widehat{M} = M \begin{pmatrix} E^{-1} & 0_3 \\ 0_3 & E^{-1} \end{pmatrix} \end{equation}
Then it is not difficult to show that
$\widehat{M}$ is a solution of the following RH problem.

\begin{rhproblem} \label{rhpforMhat}
	$\widehat{M}$ satisfies the following.
	\begin{description} \item[RHP-$\bf \widehat{M}$1]
		$\widehat{M} : \mathbb C \setminus (\mathbb T \cup [z_{\min},z_{\max}])$ is analytic,
		\item[RHP-$\bf \widehat{M}$2] $\widehat{M}_+ = \widehat{M}_- 
		\begin{pmatrix} E  & 0_3 \\ 0_3 & E \end{pmatrix}
		J_M \begin{pmatrix} E^{-1} & 0_3 \\ 0_3 & E^{-1}
			\end{pmatrix} $
			on $\mathbb T \cup [z_{\min},z_{\max}]$,
		\item[RHP-$\bf \widehat{M}$3] $\widehat{M}(z) = I_6 + \mathcal{O}(z^{-1})$
		as $z \to \infty$.  
	\end{description} 
\end{rhproblem}
\begin{proof}
	$M$ and $\begin{pmatrix} E & 0_3 \\ 0_3 & E_3 \end{pmatrix}$
	have the same jumps on $(-\infty,0) \cup (0,z_{\min}) \cup (z_{\max}, \infty)$, and therefore 
	$\widehat{M}$ has  analytic continuation 
	across these real intervals.
	Because of {\bf RHP-M4} the singularity at $0$ is
	removable and {\bf RHP-$\bf \widehat{M}$1} follows.
	 	
	The jump condition  {\bf RHP-$\bf \widehat{M}$2} on $\mathbb T$ is immediate from
	\eqref{Mhatdef}, since $E$ and $E^{-1}$ are analytic across $\mathbb T$.
	The jump condition on $(z_{\min}, z_{\max}) \setminus \{z^*\}$ is also immediate. From {\bf RHP-M4} we have that 
	$\widehat{M}$ remains bounded near the special
	points $z_{\min}, z_{\max}, z^*$, and we 
	conclude that
	the jump condition extends to these points as well.
	The asymptotic condition {\bf RHP-$\bf \widehat{M}$3}
	is immediate from {\bf RHP-M3} and the definition
	\eqref{Mhatdef}.
\end{proof}

The RH problem \ref{rhpforMhat} is normalized at infinity, with a jump matrix whose
determinant is identically one. By standard arguments
\cite{Dei99}, one concludes that $\det \widehat{M} \equiv 1$, hence $\widehat{M}$ is invertible. From \eqref{Mhatdef}
we then also get
\begin{equation} \label{Mdet} 
	\det M = (\det E)^2 \quad \text{ on } \mathbb C \setminus (\mathbb T \cup [z_{\min},z_{\max}]). 
	\end{equation}
Also for any other solution of RH problem \ref{rhpforMhat}, 
say $\widetilde{M}$, one can put $H =  \widetilde{M} \left(\widehat{M}\right)^{-1}$
and show that $H$ extends to an entire function such
that $H \to I_6$ at infinity, and then $H = I_6$
by Liouville's theorem. Hence the solution
of the RH problem for $\widehat{M}$ is unique,
and then  the solution to the RH problem \ref{rhpforM}
for $M$ is unique as well. We also conclude that
$ M \begin{pmatrix} E^{-1} & 0_3 \\ 0_3 & E^{-1}\end{pmatrix}$
not only remains bounded near $z^*$, but it has an
analytic continuation to $z^*$ with determinant equal to $1$.

Due to the entries $(-1)^N$ in the jump matrices \eqref{Mjump1}
and \eqref{Mjump2},
the  global parametrix depends on the parity of $N$.
There will be $M_e$ for $N$ even, and $M_o$ for $N$ odd.

\subsection{Solution strategy}

We look for $M$ in the block form
\begin{equation} \label{Mansatz}
	M = K
	\begin{pmatrix} E \circ \Psi & -E \circ \Phi \\
		E \circ \Phi & E \circ \Psi \end{pmatrix} 
	\end{equation}
	where $\circ$ denotes the Hadamard (i.e., entrywise) product of matrices. $K$ is a constant invertible matrix,
	and $\Phi$ and $\Psi$ are  matrix valued
	functions 
on $\mathbb C \setminus \Sigma_M$ of size
$3 \times 3$ with the property that
\begin{equation} \label{Psijump1}  \begin{pmatrix} \Psi & -\Phi \\
		\Phi & \Psi \end{pmatrix}_+
	=\begin{pmatrix} \Psi & -\Phi \\
		\Phi & \Psi \end{pmatrix}_- J_M 
		\quad \text{ on } \mathbb R \cup \mathbb T,
\end{equation}
Then, by the special form \eqref{Mansatz} and taking
note of the jump matrices \eqref{Ejump}, \eqref{Mjump1}, \eqref{Mjump2}, and \eqref{Psijump1}, 
we will have that $M_+ = M_- J_M$.

To be able to obtain the asymptotic condition \eqref{Masymp} we 
impose that
\begin{equation} \label{Psiasymp} 
		\Psi(z) = \begin{pmatrix} 1 & 1 & 1 \\
			1 & 1 & 1 \\
			1 & 1 & 1 \end{pmatrix} + \mathcal{O}(z^{-1/3})
		 \quad \text{ and } \quad \Phi(z) = \mathcal{O}(z^{-1/3}), \quad \text{ as } z \to \infty.
		\end{equation}
		
We cannot solve \eqref{Psijump1}--\eqref{Psiasymp} with
analytic $\Psi$ and $\Phi$. We have to allow certain poles.
Recall from Lemma \ref{lemma53} that one of the columns of $E$ vanishes 
identically at $z^*$.
For $j=1,2,3$, we also have $z_j^* = z(Q_j^*)$, see
Lemma \ref{lemma53}(d), and one of the entries of $E$
in row $j$, in either the second or third column, vanishes at $z_j^*$. Thus there is $k \in \{2,3\}$ such that $E_{j,k}(z_j^*)$. We supplement \eqref{Psijump1}--\eqref{Psiasymp}
with the following condition. 
For each $j=1,2,3$, and $k=2,3$ we have
\begin{equation} \label{Psipoles}
	\text{if $E_{j,k}(z_j^*) = 0$ then $\Psi_{j,k}$ and $\Phi_{j,k}$ have simple poles
	at $z_j^*$}.
\end{equation}
Under the condition \eqref{Psipoles} there
is a zero/pole cancellation at $z_j^*$ 
in the products $E_{j,k} \Psi_{j,k}$ and $E_{j,k} \Phi_{j,k}$
which means that the Hadamard products in \eqref{Mansatz}
have no poles, and remain bounded near each $z_j^*$.

\begin{remark}
	For $j =2,3$, we also have the option to put
	the simple poles at $0$, since $E_{j,k}$ vanishes
	at $0$ for every $k=1,2,3$. This would lead
	to different $\Psi$ and $\Phi$ matrices.
	Also the constant matrix $K$ in \eqref{Mansatz} would
	be different, but the result for $M$ would be the same.
\end{remark}

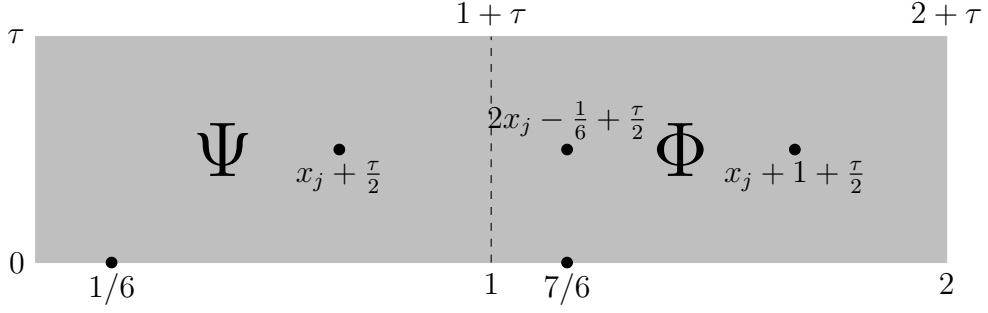
\begin{figure}[t]
	\begin{center}
		\begin{tikzpicture}[scale=0.5](15,10)(0,0)
			\filldraw[gray!50!white] (-6,0) --(6,0) --(6,6) --(-6,6) --(-6,0);
			\filldraw[gray!50!white] (6,0) --(18,0) --(18,6) --(6,6) --(6,0);
			
			\draw (0,3) node[left] {$\scaleto{\Psi}{20pt}$};
			\draw (12,3) node[left] {$\scaleto{\Phi}{20pt}$};
			
			\draw (-6,0) node[left] {$0$};
			\draw (6,0) node[below] {$1$};
			\draw (-6,6) node[left] {$\tau$};
			\draw (6,6) node[above] {$1+\tau$};
			
			\draw (18,0) node[below] {$2$};
			\draw (18,6) node[above] {$2+\tau$};
				
			\filldraw (-4,0)  circle (4pt);	 
			\filldraw (8,0)  circle (4pt);
			\draw (-4,0) node[below] {$1/6$};
			\draw (8,0) node[below] {$7/6$};
			
			\filldraw (2,3) circle (4pt);
			\filldraw (14,3) circle (4pt);
			\draw (2,3) node[below] {$x_j + \frac{\tau}{2}$};
			\draw (14,3) node[below] {$x_j +1 + \frac{\tau}{2}$};
			
			\filldraw(8,3) circle (4pt);
			\draw (8,3) node[above] {$2x_j - \frac{1}{6} + \frac{\tau}{2}$};

			\draw[dashed,black] (6,0)--(6,6);
		\end{tikzpicture}
	\end{center}
	\caption{Construction of $\Psi$ and $\Phi$ in 
		case $N$ is even. 
		For $j=1,2,3$ there is a meromorphic function $f_j$ on the rectangle $(0,2) \times (0,\tau)$
		with poles at $x_j+\frac{\tau}{2}$ and $x_j+1+\frac{\tau}{2}$, zeros at $\frac{7}{6}$
		and $2x_j - \frac{1}{6} + \frac{\tau}{2}$,
		and normalization such that $f_j(1/6) = 1$.
		The $j$th row of $\Psi$ is filled with the
		$f_j$ values in $(0,1) \times (0,\tau)$
		and the $j$th row of $\Phi$ is filled with
		the $f_j$ values in $(1,2) \times (0,\tau)$.
		 \label{fig:PsiPhieven}}
\end{figure}

\subsection{Construction of $\Psi$ and $\Phi$ (case $N$ is even)}

We construct $\Psi$ and $\Phi$ row by row. 
The $j$th rows come from two functions
come from two functions $\psi_j$ and $\phi_j$ on
the Riemann surface
with the properties  listed in the following lemma.
We assume $N$ is even.
\begin{lemma}	\label{lemma84}
	For each $j=1,2,3$, there is a unique pair of functions
	$(\psi_j, \phi_j)$ with the following properties
	\begin{enumerate}
		\item[\rm (a)] $\psi_j$ and $\phi_j$ are meromorphic
		on $\mathcal R \setminus \mathbf{b}$ with
		\begin{align} \label{psiphijump}
		\begin{array}{lll} \psi_{j,+}  = \phi_{j,-}, &  
			\phi_{j,+}  = - \psi_{j,-}, & \text{ on } 
			\mathbf{b}, 
			\end{array}
	 \end{align}
		\item[\rm (b)] $\psi_j$ and $\phi_j$ have a simple
		pole at $Q_j^*$ on the bounded oval, and 
		no other poles,
		\item[\rm (c)] $\psi_j(P_\infty) = 1$ and $\phi_j(P_{\infty}) = 0$.
	\end{enumerate}
\end{lemma}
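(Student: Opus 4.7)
The strategy is to transfer the problem to the complex torus $T = \mathbb{C}/L$ via the Abel map $\mathcal{A}$ and to combine the pair $(\psi_j, \phi_j)$ into a single meromorphic function $f_j$ on the unramified double cover $T_2 = \mathbb{C}/(2\mathbb{Z} + \tau\mathbb{Z})$ of $T$. The rectangle $(0,2) \times (0, \Im \tau)$ shown in Figure~\ref{fig:PsiPhieven} is a fundamental domain for $T_2$, and I would identify $\psi_j$ with (the pullback under $\mathcal{A}$ of) $f_j$ on $(0,1) \times (0, \Im \tau)$ and $\phi_j$ with $f_j$ on $(1,2) \times (0, \Im \tau)$. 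Under this identification the two jump conditions $\psi_{j,+} = \phi_{j,-}$ and $\phi_{j,+} = -\psi_{j,-}$ on $\mathbf{b}$ become the single condition that $f_j$ extends meromorphically across the vertical seam $\Re u = 1$ and satisfies $f_j(u+1) = -f_j(u)$, while the ordinary periodicity $f_j(u+\tau) = f_j(u)$ encodes that $\psi_j, \phi_j$ are well-defined on $\mathcal{R}\setminus \mathbf{b}$.

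For existence, I would write $f_j$ explicitly as a ratio of Jacobi theta functions. By Lemma~\ref{lemma53}(d), $\mathcal{A}(Q_j^*) \equiv x_j + \tfrac{\tau}{2}$ for some real $x_j$, so (b) demands that $f_j$ have simple poles at the two $T_2$-lifts $x_j + \tfrac{\tau}{2}$ and $x_j + 1 + \tfrac{\tau}{2}$. The condition $\phi_j(P_\infty) = 0$ forces a zero at the lift $\tfrac{7}{6}$ of $P_\infty$ in the $\phi$-half, and a short bookkeeping with \eqref{theta1period} forces the remaining zero to lie at $2x_j - \tfrac{1}{6} + \tfrac{\tau}{2}$. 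Explicitly, I would set
\[
f_j(u) = C_j\, e^{\pi i u} \, \frac{\theta_1\!\left(u - \tfrac{7}{6}\right)\, \theta_1\!\left(u - 2x_j + \tfrac{1}{6} - \tfrac{\tau}{2}\right)}{\theta_1\!\left(u - x_j - \tfrac{\tau}{2}\right)\, \theta_1\!\left(u - x_j - 1 - \tfrac{\tau}{2}\right)},
\]
and verify that the theta product contributes a factor $e^{-\pi i \tau}$ under $u \mapsto u+\tau$ and $+1$ under $u \mapsto u + 1$, so that the prefactor $e^{\pi i u}$ exactly cancels the first and produces the required sign in the second. The constant $C_j$ is then uniquely determined by the normalization $f_j(\tfrac{1}{6}) = 1$, which encodes $\psi_j(P_\infty) = 1$.

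For uniqueness, suppose $(\psi_j^{(k)}, \phi_j^{(k)})$, $k=1,2$, both satisfy (a)--(c). The corresponding functions $f_j^{(1)}, f_j^{(2)}$ on $T_2$ share the same prescribed pole set and the same twisted periodicity, so a scalar $c$ can be chosen so that $f_j^{(1)} - c f_j^{(2)}$ has no pole at $x_j + \tfrac{\tau}{2}$; the twisted periodicity then forces the pole at $x_j + 1 + \tfrac{\tau}{2}$ to vanish simultaneously. Hence $f_j^{(1)} - c f_j^{(2)}$ is a holomorphic section of the non-trivial degree-zero line bundle on $T$ whose $\mathbf{a}$-cycle monodromy is $-1$; by the standard elliptic-curve argument (non-trivial degree-zero line bundles on $T$ have no non-zero global sections), this section vanishes identically. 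The normalization at $\tfrac{1}{6}$ then forces $c = 1$, giving uniqueness.

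The main obstacle is the careful bookkeeping with the quasi-periodicity exponents of $\theta_1$: both the position of the second zero and the exponential factor $e^{\pi i u}$ must be chosen consistently so that $f_j(u+\tau) = f_j(u)$ and $f_j(u+1) = -f_j(u)$ hold simultaneously, and this balance is what pins down the zero at $2x_j - \tfrac{1}{6} + \tfrac{\tau}{2}$. A secondary point, handled by Lemma~\ref{lemma53}(d), is to check that the prescribed zero locations do not coincide with either of the pole locations, which would produce a spurious cancellation; since the $x_j$ for different $j$ differ by multiples of $\tfrac{1}{3}$ and the prescribed divisor involves $x_j$ only linearly, this is a mild genericity check.
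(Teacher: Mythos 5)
There is a genuine gap, and it stems from one conceptual slip that propagates into both the periodicity claim and the explicit formula.

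The quasi-periodicity you assign to $f_j$, namely $f_j(u+1)=-f_j(u)$, is not the right one. Under the identification $\psi_j(p)=f_j(\mathcal{A}(p))$ with $\mathcal{A}(p)\in(0,1)\times(0,\tau)$ and $\phi_j(p)=f_j(\mathcal{A}(p)+1)$, the condition $\psi_{j,+}=\phi_{j,-}$ on $\mathbf{b}$ merely says $f_j$ glues continuously across $\Re u=1$, and the condition $\phi_{j,+}=-\psi_{j,-}$ translates into $f_j(u+2)=-f_j(u)$ (compare \eqref{fjperiod}), not $f_j(u+1)=-f_j(u)$. With your version, $f_j(7/6)=-f_j(1/6)$, so the two normalizations $\psi_j(P_\infty)=f_j(1/6)=1$ and $\phi_j(P_\infty)=f_j(7/6)=0$ are mutually contradictory; no choice of $C_j$ saves this.

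The same slip also breaks your explicit theta quotient. Since $\theta_1(v-1\mid\tau)=-\theta_1(v\mid\tau)$, the two denominator factors $\theta_1(u-x_j-\tfrac{\tau}{2})$ and $\theta_1(u-x_j-1-\tfrac{\tau}{2})$ are proportional, so your denominator is $-\theta_1(u-x_j-\tfrac{\tau}{2})^2$: a double pole at a single point of $T$, not simple poles at two distinct points. Likewise $\theta_1(u-\tfrac{7}{6})$ already vanishes at $u=\tfrac{1}{6}$, so $f_j(\tfrac{1}{6})=0$ regardless of $C_j$ and the normalization to $1$ cannot be imposed. The point you are missing is that $x_j+\tfrac{\tau}{2}$ and $x_j+1+\tfrac{\tau}{2}$ (and $\tfrac{1}{6}$ and $\tfrac{7}{6}$) must be genuinely distinct points of the double cover $T_2=\mathbb{C}/(2\mathbb{Z}+\tau\mathbb{Z})$, so the building blocks must be theta functions adapted to that lattice rather than to $\mathbb{Z}+\tau\mathbb{Z}$. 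The paper achieves this with $\theta_1\bigl(\tfrac{\tau'}{2}(\cdot)\mid\tau'\bigr)$ for $\tau'=-2/\tau$, which has primitive quasi-periods $2$ and $\tau$; with that substitution the zero/pole bookkeeping you have in mind (zeros at $\tfrac{7}{6}$ and $2x_j-\tfrac{1}{6}+\tfrac{\tau}{2}$, poles at $x_j+\tfrac{\tau}{2}$ and $x_j+1+\tfrac{\tau}{2}$, normalization at $\tfrac{1}{6}$) does go through, and the quasi-periodicity comes out as the required $f_j(u+2)=-f_j(u)$, $f_j(u+\tau)=f_j(u)$. The uniqueness argument also needs to be rerun in the corrected setting, since the two poles are distinct on $T_2$ and cancelling the residue at one does not automatically cancel it at the other; one way to conclude is to note that any two solutions differ by a meromorphic function on $T_2$ with at most the two prescribed simple poles and the same quasi-period $-1$ under $u\mapsto u+2$, forcing a two-dimensional ambiguity cut to one dimension by the antisymmetry and then to zero by the normalization at $\tfrac{1}{6}$.
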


\begin{proof}
	We write $\mathcal A(Q_j^*) = x_j + \frac{\tau}{2}$ 
	with $0 \leq x_j < 1$.
Then we start from the function $f_j$ given as the following
	product and ratio of Jacobi theta functions
	\begin{multline} \label{fjeven} f_j(u) = 
	\frac{\theta_1 \left( \frac{\tau'}{2} (\frac{1}{6}-x_j - \frac{\tau}{2})  \mid \tau'\right) \
		\theta_1\left(\frac{\tau'}{2} (-\frac{5}{6}- x_j - \frac{\tau}{2}) 
		\mid \tau'\right)}
		{\theta_1\left(- \frac{\tau'}{2}   \mid \tau'\right) 
		\theta_1\left(\frac{\tau'}{2} (\frac{1}{3}- 2x_j - \frac{\tau}{2}) \mid \tau'\right)
		} \\ 
		\times
		\frac{\theta_1\left( \frac{\tau'}{2} (u-\frac{7}{6})  \mid \tau'\right) \
			\theta_1(\frac{\tau'}{2} (u- 2x_j + \frac{1}{6} - \frac{\tau}{2}) 
			\mid \tau')}
		{\theta_1\left( \frac{\tau'}{2} (u-x_j - \frac{\tau}{2})  \mid \tau'\right) \
			\theta_1\left(\frac{\tau'}{2} (u- x_j-1 - \frac{\tau}{2}) 
			\mid \tau'\right)},
		\quad \tau' = - \frac{2}{\tau}. 
\end{multline}
From the quasi-periodicity properties \eqref{theta1period}
we then get
\begin{equation} \label{fjperiod}
	f_j(u+2) = - f_j(u), \qquad f_j(u+\tau) = f_j(u).
\end{equation}
Hence $f_j$ is quasi-periodic with respect to the
lattice $2\mathbb Z + \tau \mathbb Z$.
It has simple zeros in $\frac{7}{6}$ and
$2x_j- \frac{1}{6}$, simple poles in $x_j + \frac{\tau}{2} = \mathcal A(Q_j^*)$,
and $x_j+1 + \frac{\tau}{2} = \mathcal A(Q_j^*) + 1$ 
and no other zeros and poles (modulo $2 \mathbb Z + \tau \mathbb Z$). The prefactor on the first line of \eqref{fjeven} 
is such that $f_j(\frac{1}{6}) = 1$.

Then we define $\psi_j$ and $\phi_j$ on the Riemann surface by
\begin{align} \label{psijdef}
	\psi_j(p) = f_j (\mathcal A(p)), \quad
	\phi_j(p)  = f_j (\mathcal A(p) + 1), 
\end{align}
where $\mathcal A$ is the Abel map, as before, with 
values in $(0,1) \times (0,\tau)$. 
Thus $\psi_j$ takes the values of $f_j$ from the
left half of its fundamental domain $[0,2] \times [0,\tau]$,
and $\phi_j$ takes the values from the right half,
see also Figure \ref{fig:PsiPhieven}.

The items (a), (b), (c) of the lemma now follow
by easy verification. 
\end{proof}

\begin{definition} \label{definition85}
	With $\psi_j$ and $\phi_j$ as in Lemma \ref{lemma84}, 
	we define in case $N$ is even
	\begin{equation} \label{PsiPhidef}
		\Psi := (\psi_{j,k})_{j,k=1}^3, \qquad \Phi := (\phi_{j,k})_{j,k=1}^3, 
	\end{equation}
	where $\psi_{j,k}$ denotes the restriction of $\psi_j$
	to the $k$th sheet of the Riemann surface,
	and similarly, $\phi_{j,k}$ denotes the
	restriction of $\phi_j$ to the $k$th sheet.
\end{definition}

\begin{corollary} \label{corollary86}
	If $N$ is even, then 
 $\Psi$ and $\Phi$ defined in \eqref{PsiPhidef} satisfy \eqref{Psijump1}, \eqref{Psiasymp}, and \eqref{Psipoles}. 	\end{corollary}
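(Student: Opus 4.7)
The plan is to verify the three conditions \eqref{Psijump1}, \eqref{Psiasymp}, and \eqref{Psipoles} in turn, reducing each to the properties of $\psi_j$ and $\phi_j$ collected in Lemma \ref{lemma84}. I would begin with the pole and asymptotic conditions, both of which are essentially immediate, and then turn to the jump condition.

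For \eqref{Psipoles}, the point $Q_j^*$ lies on the bounded oval and hence on sheet $k$ for exactly one $k \in \{2,3\}$. By Lemma \ref{lemma53}, this $k$ is precisely the one for which $E_{j,k}(z_j^*) = e_j(Q_j^*) = 0$. Since by Lemma \ref{lemma84}(b) each of $\psi_j,\phi_j$ has a simple pole at $Q_j^*$ and no other poles on $\mathcal R$, and since the projection $\pi_z$ is a local biholomorphism from sheet $k$ at $Q_j^*$ onto a neighborhood of $z_j^*$ (under the generic assumption $z_j^* \notin \{z_{\min},z_{\max},z^*\}$), the entries $\Psi_{j,k}$ and $\Phi_{j,k}$ inherit this simple pole at $z_j^*$. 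For \eqref{Psiasymp}, since $z^{-1/3}$ is a local coordinate at the triple branch point $P_\infty$, Lemma \ref{lemma84}(c) immediately gives $\psi_{j,k}(z) = 1 + \mathcal{O}(z^{-1/3})$ and $\phi_{j,k}(z) = \mathcal{O}(z^{-1/3})$ as $z \to \infty$ on any of the three sheets, which yields the required asymptotic behavior of $\Psi$ and $\Phi$.

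For the jump condition \eqref{Psijump1} I would split it into jumps on $\mathbb R$ and on $\mathbb T$. On $\mathbb R$, the real branch cuts of $\mathcal R$ lie on the $\mathbf{a}$-cycle and not on $\mathbf{b}$, so $\psi_j$ and $\phi_j$ are analytic across the preimages of these cuts in $\mathcal R$, and crossing such a cut only permutes the sheet label according to $J_E$ from \eqref{Ejump}. This gives $\Psi_+ = \Psi_- J_E$ and $\Phi_+ = \Phi_- J_E$, so that
\[ \begin{pmatrix} \Psi & -\Phi \\ \Phi & \Psi \end{pmatrix}_+ = \begin{pmatrix} \Psi & -\Phi \\ \Phi & \Psi \end{pmatrix}_- (J_E \oplus J_E), \]
which matches $J_M$ on $(-\infty,z_{\min}) \cup (z_{\max},\infty)$. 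On the interval $[z_{\min},z_{\max}]$, both $\lambda_2$ and $\lambda_3$ extend analytically through the interior, and so each $\psi_{j,k}$ and $\phi_{j,k}$ extends analytically as well; the jump is trivial, matching $J_M = I_6$ for $N$ even.

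On $\mathbb T$, only sheets $1$ and $2$ contribute to the $\mathbf{b}$-cycle $\Gamma_1 \cup \Gamma_2$, while $\Gamma_3 \not\subset \mathbf{b}$. Hence by Lemma \ref{lemma84}(a) one has $\psi_{j,k,+} = \phi_{j,k,-}$ and $\phi_{j,k,+} = -\psi_{j,k,-}$ for $k = 1, 2$, while $\psi_{j,3}$ and $\phi_{j,3}$ extend analytically across $\mathbb T$. A direct block calculation then shows that for $k = 1, 2$ the pair of columns $k$ and $k+3$ of $\begin{pmatrix} \Psi & -\Phi \\ \Phi & \Psi \end{pmatrix}$ transforms under the jump by the matrix $\begin{pmatrix} 0 & 1 \\ -1 & 0 \end{pmatrix}$, while columns $3$ and $6$ are unchanged. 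After rearranging these column pairs by the permutation $\Pi$ of \eqref{Pidef}, this is exactly the block form of $J_M$ in \eqref{Mjump1} with $(-1)^N = 1$. The main obstacle is just this bookkeeping: carefully tracking the conjugation by $\Pi$ and checking that the signs from Lemma \ref{lemma84}(a) land in the correct positions, but no further input beyond Lemma \ref{lemma84} is required.
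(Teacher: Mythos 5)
Your argument is correct and follows essentially the same route as the paper's proof: the jump condition is reduced entrywise to \eqref{psiphijump} together with the analyticity of $\psi_j,\phi_j$ across $\Gamma_3$, the real ovals, and the interior of the bounded oval; the asymptotics come from Lemma \ref{lemma84}(c) and the local coordinate $z^{-1/3}$ at $P_\infty$; and the pole condition comes from Lemma \ref{lemma84}(b) together with the identification of the sheet containing $Q_j^*$. The only difference is your added caveat that $z_j^*\notin\{z_{\min},z_{\max},z^*\}$, which the paper's proof omits but is consistent with the genericity remark it makes after Lemma \ref{lemma53}.
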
 
 
 \begin{proof}
 In view of \eqref{Mjump1} with $N$ even, the jump condition \eqref{Psijump1} on $\mathbb T$	is satisfied if
 and only if for $j=1,2,3$,
 \begin{align} \label{PsiPhijump1}
 	\Psi_{jk,+} =  \Phi_{jk,-},  \quad & \Phi_{jk,+} = -\Psi_{jk,-}, \quad
 k=1,2,  \\ \label{PsiPhijump2}
 	\Psi_{j3,+} = \Psi_{j3,-},\quad & \Phi_{j3,+} = \Phi_{j3,-},
 	\end{align}
 on $\mathbb T$.
 The identities \eqref{PsiPhijump1} follow from
 \eqref{psiphijump}  and the definition \eqref{PsiPhidef}
 since $\mathbf{b} = \Gamma_1 \cup \Gamma_2$. 
 The identities \eqref{PsiPhijump2} come from the fact
 that $\psi_j$ and $\phi_j$ in Lemma \ref{lemma84} are
 analytic across $\Gamma_3$.
 
 On $\mathbb R$, we have from \eqref{Mjump2}
 with $N$ even, that the jump condition \eqref{Psijump1} on $\mathbb R$ is satisfied if and only if 
 \begin{equation} \label{PsiPhijump3} 
 	\Psi_+  = \Psi_- J_E , \quad \Phi_+ = \Phi_- J_E 
 	\quad \text{ on } \mathbb R. \end{equation}
 Recall that $J_E$ is given by \eqref{Ejump}.
 With the definition \eqref{PsiPhidef} these identities
 are indeed satisfied, as they reflect the fact that
 $\psi_j$ and $\phi_j$ from Lemma \ref{lemma84} 
 are analytic across the real ovals. 
 This implies, for example, that $\psi_{j,1,\pm} = \psi_{j,2,\mp}$
 and $\psi_{j,3,+} = \psi_{j,3,-}$ on $(-\infty,0)$,
 and similarly for $\phi_j$, which gives the
 jump properties \eqref{PsiPhijump3} on $(-\infty,0)$.   
 
 The asymptotic property \eqref{Psiasymp} follows
 from the normalization in part (c) of Lemma \ref{lemma84},
 together with the fact that the $\psi_j$ and $\phi_j$
 are holomorphic at $P_{\infty}$.
 Since $z^{-1/3}$ is a local
 coordinate at $P_{\infty}$ we obtain the error terms
 $\mathcal{O}(z^{-1/3})$ in \eqref{Psiasymp}. 

Finally, the property \eqref{Psipoles} follows
from part (b) of Lemma \ref{lemma84}. Indeed, $\psi_j$
and $\phi_j$ have a simple pole at $Q_j^*$ on the bounded
oval. Thus $Q_j^*$ is either on the second or the
third sheet. If $Q_j^*$ is on the $k$th sheet
then $\psi_{j,k}$ and $\phi_{j,k}$ have simple
poles at $z_j^* = z(Q_j^*)$, and this corresponds
to the property \eqref{Psipoles}.  
 \end{proof}

 \begin{figure}[t]
 	\begin{center}
 		\begin{tikzpicture}[scale=0.5](15,10)(0,0)
 			\filldraw[gray!50!white] (-6,0) --(6,0) --(6,6) --(-6,6) --(-6,0);
 			\filldraw[gray!50!white] (6,0) --(18,0) --(18,6) --(6,6) --(6,0);
 			
 			\draw (0,3) node[right] {$\scaleto{\Phi}{20pt}$};
 			\draw (12,3) node[right] {$\scaleto{\Psi}{20pt}$};
 			
 			\draw (-6,0) node[left] {$-\frac{\tau}{2}$};
 			\draw (-6,3) node[left] {$0$};
 			\draw (-6,6) node[left] {$\frac{\tau}{2}$};
 			\draw (6,6) node[above] {$1+\frac{\tau}{2}$};
 			\draw (6,0) node[below] {$1-\frac{\tau}{2}$};
 			
 			\draw (18,0) node[below] {$2- \frac{\tau}{2}$};
 			\draw (18,6) node[above] {$2+ \frac{\tau}{2}$};
 			
 			\filldraw (-4,3)  circle (4pt);	 
 			\filldraw (8,3)  circle (4pt);
 			\draw (-4,3) node[below] {$1/6$};
 			\draw (8,3) node[below] {$7/6$};
 			
 			\filldraw (2,6) circle (4pt);
 			\filldraw (14,6) circle (4pt);
 			\draw (2,6) node[above] {$x_j + \frac{\tau}{2}$};
 			\draw (14,6) node[above] {$x_j +1 + \frac{\tau}{2}$};
 			
 			\filldraw(8,6) circle (4pt);
 			\draw (8,6) node[below] {$2x_j - \frac{1}{6} + \frac{\tau}{2}$};

 			\draw[dashed,black] (6,0)--(6,6);
 		\end{tikzpicture}
 	\end{center}
 	\caption{Construction of $\Psi$ and $\Phi$ in 
 		case $N$ is odd. 
 		For $j=1,2,3$ there is a meromorphic function $f_j$ on the rectangle $(0,2) \times (-\frac{\tau}{2}, \frac{\tau}{2})$
 		with poles at $x_j+\frac{\tau}{2}$ and $x_j+1+\frac{\tau}{2}$, zeros at $\frac{1}{6}$
 		and $2x_j - \frac{5}{6} + \frac{\tau}{2}$,
 		and normalization such that $f_j(7/6) = 1$.
 		The $j$th row of $\Phi$ is filled with the
 		$f_j$ values in $(0,1) \times (-\frac{\tau}{2}, \frac{\tau}{2})$
 		and the $j$th row of $\Psi$ is filled with
 		the $f_j$ values in $(1,2) \times (-\frac{\tau}{2},\frac{\tau}{2})$.
 		\label{fig:PsiPhiodd}}
 \end{figure}
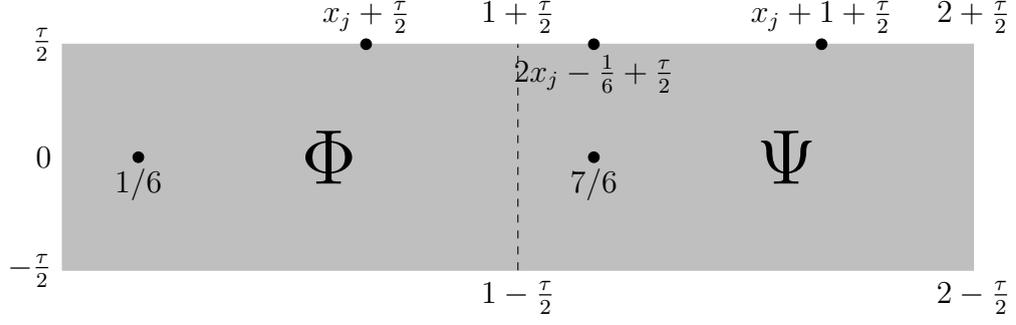
 
 \subsection{Construction of $\Phi$ and $\Psi$ (case $N$ is odd)}

The construction in the case $N$ is odd is similar, but the details are slightly different.

\begin{lemma}	\label{lemma87}
	For each $j=1,2,3$, there is a unique pair of functions
	$(\psi_j, \phi_j)$ with the following properties
	\begin{enumerate}
		\item[\rm (a)] $\psi_j$ and $\phi_j$ are meromorphic
		on $\mathcal R \setminus (\mathbf{b} \cup \mathbf{c})$,
		where $\mathbf{c}$ is the bounded oval on the Riemann surface, and
		\begin{align} \label{psiphijump2}
			\begin{array}{lll} \psi_{j,+}  = -\phi_{j,-}, &  
				\phi_{j,+}  =  \psi_{j,-}, & \text{ on } \mathbf{b}, \\
				\psi_{j,+}  = - \psi_{j,-}, &
				\phi_{j,+}  = - \phi_{j,-}, & \text{ on } \mathbf{c},
			\end{array}
		\end{align}
		\item[\rm (b)] $\psi_j$ and $\phi_j$ have a simple
		pole at $Q_j^*$ on the bounded oval, and have
		no other poles,
		\item[\rm (c)] $\psi_j(P_\infty) = 1$ and $\phi_j(P_{\infty}) = 0$.
	\end{enumerate}
\end{lemma}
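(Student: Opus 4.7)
The plan is to mirror the construction of Lemma \ref{lemma84}, adapted to the additional jump of $\psi_j$ and $\phi_j$ across the bounded oval $\mathbf{c}$. Writing $\mathcal A(Q_j^*) = x_j + \tau/2$, I will look for a scalar meromorphic function $f_j$ from which I set
\[ \phi_j(p) = f_j(\mathcal A(p)), \qquad \psi_j(p) = f_j(\mathcal A(p) + 1), \]
where $\mathcal A$ is normalized to take values in $(0,1) \times (-\tau/2, \tau/2)$, as in Figure~\ref{fig:PsiPhiodd}. Tracking the two sides of $\mathbf{b}$ and $\mathbf{c}$ exactly as in the proof of Corollary~\ref{corollary86}, the jump identities on $\mathbf{b}$ translate to $f_j(u+2) = -f_j(u)$, while the new jump identities on $\mathbf{c}$ translate to $f_j(u+\tau) = -f_j(u)$. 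Hence $f_j$ extends to a meromorphic section of a degree-zero line bundle on $\mathbb{C}/(2\mathbb{Z} + \tau \mathbb{Z})$ with both multipliers equal to $-1$; equivalently, $f_j$ is an honest meromorphic function on the unramified two-fold cover $\mathbb{C}/L_1$ with $L_1 = (2+\tau)\mathbb{Z} + (2-\tau)\mathbb{Z}$, a sublattice of $2\mathbb{Z} + \tau\mathbb{Z}$ of index two.

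The main step is to write down $f_j$ explicitly, and this is the principal obstacle compared to Lemma~\ref{lemma84}. A pure $\theta_1$-ratio of the form \eqref{fjeven} necessarily satisfies $f_j(u+\tau) = +f_j(u)$, because each $\theta_1$ factor contributes the same sign $-1$ under $u \mapsto u + \tau$ and the signs cancel in pairs. To produce the required additional sign one must either mix theta functions with different characteristics (for instance, replacing one $\theta_1$ factor by $\theta_4$, which has different quasi-periodicity under $u \mapsto u + \tau$), or realize $f_j$ directly as a ratio of $\theta_1$-factors on the double cover $\mathbb{C}/L_1$ where the cocycle becomes trivial. In either formulation, the prescribed simple poles at $x_j + \tau/2$ and $x_j + 1 + \tau/2$ determine the denominator, the prescribed zero at $u = 1/6$ supplies one numerator factor, and Abel's theorem on $\mathbb{C}/L_1$ pins down the location of the remaining zero. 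The overall constant is then fixed by the normalization $f_j(7/6) = 1$, and a direct computation parallel to the derivation of \eqref{fjperiod} verifies both quasi-periodicities.

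Uniqueness will follow from a standard dimension count: the difference of two candidate $f_j$'s is a section of the same line bundle with the same pole divisor, vanishing at both $u = 1/6$ and $u = 7/6$; by Riemann--Roch on the elliptic curve the space of such sections of a degree-two line bundle is two-dimensional, so these two vanishing conditions force the difference to be identically zero. Properties (b) and (c) of the lemma are then immediate from the construction: the two poles of $f_j$ in the fundamental domain project to the single point $Q_j^*$ on $\mathcal R$, giving each of $\psi_j$ and $\phi_j$ a unique simple pole there, while $\psi_j(P_\infty) = f_j(7/6) = 1$ and $\phi_j(P_\infty) = f_j(1/6) = 0$ by direct evaluation.
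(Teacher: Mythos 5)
Your proposal follows the paper's overall strategy, but you have correctly flagged a defect in the printed formula \eqref{fjodd}. Your claim that a pure ratio of $\theta_1$-factors with arguments $\frac{\tau'}{2}(u-a)$, $\tau'=-2/\tau$, necessarily has multiplier $+1$ under $u\mapsto u+\tau$ is right: that substitution shifts each theta argument by $\tau'\tau/2=-1$, each factor picks up a sign $-1$ by \eqref{theta1period}, and with two factors in the numerator and two in the denominator the signs cancel. A direct check of \eqref{fjodd} as written yields $f_j(u+\tau)=f_j(u)$ and $f_j(u+2)=-e^{-\pi i\tau'}f_j(u)$, neither of which matches \eqref{fjperiododd}, so the explicit formula in the paper does not in fact produce the required multipliers $(-1,-1)$.

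Both of your proposed remedies are viable. Perhaps the simplest repair of \eqref{fjodd} itself is to multiply it by $e^{\pi i\tau' u/2}$, which changes the multipliers by $(e^{\pi i\tau'},-1)$ and yields $(-1,-1)$ while leaving the divisor unchanged; the divisor is then consistent with the Abel constraint $\sum(\text{zeros})-\sum(\text{poles})\equiv 1-\frac{\tau}{2}\pmod{2\mathbb{Z}+\tau\mathbb{Z}}$ that these multipliers impose. Your uniqueness argument is essentially sound but should be tightened: a nonzero section of the relevant line bundle vanishing at both $1/6$ and $7/6$ would have exactly those zeros and poles at $x_j+\frac{\tau}{2}$, $x_j+1+\frac{\tau}{2}$, and the resulting divisor $\frac{1}{6}+\frac{7}{6}-(x_j+\frac{\tau}{2})-(x_j+1+\frac{\tau}{2})$ fails the Abel constraint just stated (their difference is $-\frac{2}{3}-2x_j-\frac{\tau}{2}$, never a lattice vector). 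To complete the proposal you would still need to write the corrected explicit formula and the adjusted normalization prefactor, but your route is correct and on this point more careful than the printed proof.
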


\begin{proof} It is similar to the proof of Lemma \ref{lemma84}
	but instead of \eqref{fjeven} we now use 
	for $j=1,2,3$,
	\begin{multline} \label{fjodd} f_j(u) = 
		\frac{\theta_1 \left( \frac{\tau'}{2} (\frac{7}{6}-x_j - \frac{\tau}{2})  \mid \tau'\right) \
			\theta_1\left(\frac{\tau'}{2} (\frac{1}{6}- x_j - \frac{\tau}{2}) 
			\mid \tau'\right)}
		{\theta_1\left( \frac{\tau'}{2}   \mid \tau'\right) 
			\theta_1\left(\frac{\tau'}{2} (\frac{4}{3}- 2x_j - \frac{\tau}{2}) \mid \tau'\right)
		} \\ 
		\times
		\frac{\theta_1\left( \frac{\tau'}{2} (u-\frac{1}{6})  \mid \tau'\right) \
			\theta_1(\frac{\tau'}{2} (u- 2x_j + \frac{1}{6} - \frac{\tau}{2}) 
			\mid \tau')}
		{\theta_1\left( \frac{\tau'}{2} (u-x_j - \frac{\tau}{2})  \mid \tau'\right) \
			\theta_1\left(\frac{\tau'}{2} (u- x_j-1 - \frac{\tau}{2}) 
			\mid \tau'\right)},
		\quad \tau' = - \frac{2}{\tau}. 
	\end{multline}
We then have
\begin{equation} \label{fjperiododd}
	f_j(u+2) = - f_j(u), \qquad f_j(u+\tau) = -f_j(u),
\end{equation}
and $f_j$ has simple zeros $\frac{1}{6}$ and
$2x_j+ \frac{5}{6}$, simple poles in $x_j + \frac{\tau}{2} = \mathcal A(Q_j^*)$,
and $x_j+1 + \frac{\tau}{2} = \mathcal A(Q_j^*) + 1$ 
and no other zeros and poles (modulo $2 \mathbb Z + \tau \mathbb Z$).  The prefactor is such that $f_j(\frac{7}{6}) = 1$.

We use the branch of the Abel map
that is well-defind and analytic on $\mathcal R \setminus 
(\mathbf{b} \cup \mathbf{c})$
with values in $(0,1) \times (-\frac{\tau}{2}, \frac{\tau}{2})$,
and we define
\[ \phi_j(p) = f_j(\mathcal A(p)), \quad	
	\psi_j(p) =f_j(\mathcal A(p) + 1). \]
The lemma follows by straightforward verification.
Note that now $\psi_j$ takes the values of $f_j$ from
the right half of the fundamental domain, and $\phi_j$
from the left half, see Figure \ref{fig:PsiPhiodd}.
\end{proof}

In the case $N$ is odd we use the functions
$\psi_j$ and $\phi_j$ from  Lemma \ref{lemma87}.

\begin{definition} \label{definition88}
	With $\psi_j$ and $\phi_j$ as in Lemma \ref{lemma87}, 
	we define in case $N$ is odd
	\begin{equation} \label{PsiPhidef2}
		\Psi := (\psi_{j,k})_{j,k=1}^3, \qquad \Phi := (\phi_{j,k})_{j,k=1}^3, 
	\end{equation}
	where $\psi_{j,k}$ and $\phi_{j,k}$ denote the restrictions of $\psi_j$ and $\phi_j$ to the $k$th sheet of the Riemann surface. 
\end{definition}

\begin{corollary} \label{corollary89}
	If $N$ is odd, then 
	$\Psi$ and $\Phi$ defined in \eqref{PsiPhidef2}
	satisfy \eqref{Psijump1}, \eqref{Psiasymp}, and \eqref{Psipoles}. 	\end{corollary}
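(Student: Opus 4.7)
The plan is to mirror the proof of Corollary \ref{corollary86}, verifying the three properties \eqref{Psijump1}, \eqref{Psiasymp}, \eqref{Psipoles} in turn. The key new feature in the odd case is that the jump matrix $J_M$ on $(z_{\min}, z_{\max})$ from \eqref{Mjump2} equals $\diag(1,-1,-1,1,-1,-1)$ rather than the identity, and this is precisely absorbed by the additional jump of $\psi_j,\phi_j$ across the bounded oval $\mathbf{c}$ given in the second line of \eqref{psiphijump2}.

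First, for the jump \eqref{Psijump1} on $\mathbb T$: with $N$ odd, the $2\times 2$ blocks in \eqref{Mjump1} become $\begin{pmatrix} 0 & -1 \\ 1 & 0 \end{pmatrix}$ for the sheet-$1$ and sheet-$2$ pairs, and $I_2$ for sheet $3$. Conjugation by $\Pi$ translates this, exactly as in the even case, into the requirements $\Psi_{j,k,+} = -\Phi_{j,k,-}$ and $\Phi_{j,k,+} = \Psi_{j,k,-}$ for $k=1,2$, together with no jump in the sheet-$3$ column. The first set matches the $\mathbf{b}$-jumps in the first line of \eqref{psiphijump2} (since $\mathbf{b} = \Gamma_1 \cup \Gamma_2$), while the second holds because $\psi_j$ and $\phi_j$ are analytic across $\Gamma_3$, as $\Gamma_3$ is disjoint from both $\mathbf{b}$ and $\mathbf{c}$.

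Next, for the jump on $\mathbb R$: on $(-\infty, z_{\min}) \cup (z_{\max}, \infty)$ the jump $J_E \oplus J_E$ is identical to that in the even case and follows from analyticity of $\psi_j, \phi_j$ across the unbounded real ovals. On $(z_{\min}, z_{\max})$, the new jump $\diag(1,-1,-1,1,-1,-1)$ demands sign flips in the sheet-$2$ and sheet-$3$ columns of $\Psi$ and $\Phi$; sheet $1$ is unaffected because it is disjoint from $\mathbf{c}$. These sign flips come directly from the $\mathbf{c}$-jumps $\psi_{j,+} = -\psi_{j,-}$ and $\phi_{j,+} = -\phi_{j,-}$ in \eqref{psiphijump2}: crossing $[z_{\min}, z_{\max}]$ on sheet $2$ or $3$ is realized on $\mathcal R$ by crossing $\mathbf{c}$, and on the theta-function side it encodes the quasi-periodicity $f_j(u+\tau) = -f_j(u)$ from \eqref{fjperiododd}. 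Transcribing the $\mathbf{c}$-jump into sheetwise sign changes is the one step that requires a little care, but it is purely a matter of tracking how the branch cut between sheets $2$ and $3$ intersects the ``upper'' side of $\mathbf{c}$ on $\mathcal R$.

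The asymptotic condition \eqref{Psiasymp} is immediate from Lemma \ref{lemma87}(c), together with holomorphy of $\psi_j, \phi_j$ at $P_\infty$ and the fact that $z^{-1/3}$ is a local coordinate there, yielding the error $\mathcal O(z^{-1/3})$. Finally, the pole condition \eqref{Psipoles} follows from Lemma \ref{lemma87}(b): the simple pole of $\psi_j$ and $\phi_j$ at $Q_j^* = (z_j^*, \lambda_j^*)$ produces a simple pole in $\Psi_{j,k}$ and $\Phi_{j,k}$ for the unique $k \in \{2,3\}$ such that $Q_j^*$ lies on the $k$-th sheet, which is precisely the index for which $E_{j,k}(z_j^*) = 0$ by the defining property of $Q_j^*$ from Lemma \ref{lemma53}(d).
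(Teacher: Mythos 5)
Your argument is correct and follows the same route as the paper's (very terse) proof: mirror Corollary \ref{corollary86}, observing that the sign conventions in \eqref{psiphijump2} on $\mathbf b$ produce the odd-$N$ form of \eqref{Mjump1}, and that the sign changes across $\mathbf c$ produce the nontrivial odd-$N$ jump \eqref{Mjump2} on $(z_{\min},z_{\max})$, while \eqref{Psiasymp} and \eqref{Psipoles} follow from Lemma \ref{lemma87}(c) and (b) just as in the even case. One small inaccuracy: $[z_{\min},z_{\max}]$ is \emph{not} a branch cut between sheets $2$ and $3$ (the branch cuts there are $[0,z_{\min}]$ and $[z_{\max},\infty)$, and indeed $\Lambda_+=\Lambda_-$ on $[z_{\min},z_{\max}]$ by \eqref{Lambdajump}); the sheet-$2$ and sheet-$3$ arcs over $[z_{\min},z_{\max}]$ are the two halves of the closed oval $\mathbf c$, joined at $B_{\min}$ and $B_{\max}$, so crossing $[z_{\min},z_{\max}]$ in the $z$-plane on sheet $2$ or $3$ stays on the same sheet but crosses $\mathbf c$ on $\mathcal R$, which is all that your argument actually uses.
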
 
\begin{proof}
This is similar to the proof of Corollary \ref{corollary86}.
The only difference comes from the entries $(-1)^N$ in the jump matrices in 
\eqref{Mjump1} and \eqref{Mjump2}.
The different signs in \eqref{psiphijump2}
on $\mathbf b$ when compared to \eqref{psiphijump}
give rise to the jump matrix \eqref{Mjump1} with $N$ odd.
The sign changes on $\mathbf{c}$ in \eqref{psiphijump2},
create the jump \eqref{Mjump2} on $(z_{\min}, z_{\max})$
that is non-trivial in case $N$ is odd. 
\end{proof}

\subsection{Construction of $M$}

\begin{proposition}
Let $\Phi$ and $\Psi$ be as in Definition \ref{definition85}
in case $N$ is even,
and as in Definition \ref{definition88} in case $N$ is odd.

Then there is a constant invertible matrix $K$ such that
\begin{equation} \label{Mdef} M = K \begin{pmatrix} E \circ \Psi & - E \circ \Phi \\
	E \circ \Phi & E \circ \Psi \end{pmatrix} \end{equation}
satisfies the RH problem \ref{rhpforM}.

\end{proposition}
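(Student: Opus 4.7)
My approach would be to verify the four items of RH problem \ref{rhpforM} for the ansatz \eqref{Mdef} one at a time, using the asymptotic normalization at infinity to pin down the constant $K$.

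\textbf{Jumps on $\mathbb R \cup \mathbb T$.} Corollaries \ref{corollary86} and \ref{corollary89} give $F_+ = F_- J_M$, where $F := \begin{pmatrix} \Psi & -\Phi \\ \Phi & \Psi \end{pmatrix}$. The key compatibility observation is that $(X\sigma)\circ(Y\sigma) = (X\circ Y)\sigma$ for any $3\times 3$ matrices $X, Y$ and any permutation matrix $\sigma$. Since $J_E$ on $\mathbb R$ is a permutation and $E$ is analytic across $\mathbb T \cup [z_{\min}, z_{\max}]$, the Hadamard-multiplied block matrix inside \eqref{Mdef} inherits the jump $J_M$ from $F$; premultiplication by the constant $K$ preserves all jumps.

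\textbf{Asymptotic and choice of $K$.} Put $A := (E\circ\Psi) E^{-1}$ and $B := (E\circ\Phi) E^{-1}$, so that $M \cdot \mathrm{diag}(E^{-1}, E^{-1}) = K \begin{pmatrix} A & -B \\ B & A \end{pmatrix}$. Using Lemmas \ref{lemma53} and \ref{lemma54} I obtain the sheet-sum representation
\[
A_{j,k}(z) = \sum_{m=1}^3 (e_j\psi_j\widetilde{e}_k)(z,\lambda_m(z)), \qquad B_{j,k}(z) = \sum_{m=1}^3 (e_j\phi_j\widetilde{e}_k)(z,\lambda_m(z)).
\]
Near $P_\infty$ both $e_j\psi_j\widetilde{e}_k$ and $e_j\phi_j\widetilde{e}_k$ are meromorphic (since $P_\infty \notin \mathbf b \cup \mathbf c$); expanding in the local coordinate $t = z^{-1/3}$, which takes values $t_m = \omega^{m-1}z^{-1/3}$ on sheet $m$, the identity $\sum_m \omega^{(m-1)n} = 3\cdot \mathbf{1}_{3\mid n}$ leaves only integer powers of $z$ in the sheet sum. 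A direct order-by-order count using the pole/zero data in Table \ref{table1} together with $\psi_j(P_\infty)=1$, $\phi_j(P_\infty)=0$, and $(e_j\widetilde{e}_j)(P_\infty)=\tfrac13$ (from the proof of Lemma \ref{lemma54}) then shows that $A_\infty =: A_0$ is unit lower triangular and $B_\infty =: B_0$ is strictly lower triangular. I then set
\[
K := \begin{pmatrix} A_0 & -B_0 \\ B_0 & A_0 \end{pmatrix}^{-1}.
\]
Because $B_0$ is strictly lower triangular and $A_0$ is unit lower triangular, $A_0 + B_0 A_0^{-1} B_0$ is again unit lower triangular, so the Schur complement formula gives $\det \begin{pmatrix} A_0 & -B_0 \\ B_0 & A_0 \end{pmatrix} = \det A_0 \cdot \det(A_0 + B_0 A_0^{-1} B_0) = 1$, whence $K$ exists and is invertible. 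The asymptotic \eqref{Masymp} then follows.

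\textbf{Boundedness at the special points, and main obstacle.} The condition RHP-M4 reduces to boundedness of $A$ and $B$ at $z = 0, z_{\min}, z_{\max}, z^*$. At $z^*$, the simultaneous zeros of $e_1, e_2, e_3$ at $Q^*$ (Lemma \ref{lemma53}(c)) cancel the simple poles of $\widetilde{e}_k$ at $Q^*$. At $z=0$, the worst leading singularity on each sheet in the local coordinate $t = z^{1/3}$ is $t^{-2}$, and the cubic cancellation $\sum_m \omega^{(m-1)n}=0$ for $3\nmid n$, combined with the absence of negative multiples of $3$ in the range $n \geq -2$, renders the sheet sum analytic. At $z_{\min}, z_{\max}$ only sheets $2$ and $3$ contribute non-trivially, and the opposite-sign half-integer-power singularities from the simple poles of $\widetilde{e}_k$ at $B_{\min}, B_{\max}$ cancel in the two-sheet sum. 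The main obstacle will be the careful order-by-order tracking of Laurent coefficients at $P_\infty$ that establishes the lower-triangular structures of $A_0$ and $B_0$ and thereby the invertibility of $K$; once this structural fact is in hand, the remaining verifications are routine local computations from the divisor data in Table \ref{table1}.
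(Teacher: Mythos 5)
Your proposal is correct and follows essentially the same route as the paper's proof: verify the jump, pin down $K$ from the asymptotics at infinity, and check RHP-M4 at the exceptional points via the sheet-sum representation $\sum_{m}(e_j\psi_j\widetilde e_k)(z,\lambda_m(z))$. The differences are expository rather than structural. You make the Hadamard/permutation compatibility $(X\sigma)\circ(Y\sigma)=(X\circ Y)\sigma$ explicit, which the paper leaves implicit when citing \eqref{Ejump} and \eqref{Psijump1}; you give a Schur-complement computation showing $\det\begin{pmatrix}A_0&-B_0\\B_0&A_0\end{pmatrix}=1$, whereas the paper simply remarks that $I_6+\begin{pmatrix}C_1&-C_2\\C_2&C_1\end{pmatrix}$ (with $A_0=I_3+C_1$, $B_0=C_2$) is invertible because $C_1,C_2$ are strictly lower triangular; and at $z_{\min},z_{\max}$ you describe the two-sheet cancellation directly in terms of the odd half-integer powers, while the paper refers back to the analogous argument from RHP-T4. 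These are all valid alternative phrasings of the same computations. One small inaccuracy worth noting: the normalization $e_j\widetilde e_j=\tfrac13$ in Lemma \ref{lemma54} is fixed at $P_0$, not at $P_\infty$; the value $\tfrac13$ at $P_\infty$ does hold, but it follows from $\langle e_j,\widetilde e_j\rangle\equiv1$ combined with the fact that all three sheets coalesce at $P_\infty$, not from the stated normalization. This does not affect the correctness of the argument.
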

\begin{proof} 
{\bf RHP-M1} Analyticity of $M$ is clear, for any choice of $K$.

{\bf RHP-M2} The jump conditions in {\bf RHP-M2} are satisfied
because of \eqref{Ejump}, \eqref{Psijump1} 
and the definition \eqref{Mdef} of $M$. Note that
the constant matrix $K$ has no influence on the jump condition.

{\bf RHP-M3} The choice of $K$ is important for the
asymptotic condition.
The functions $\psi_j$ and $\phi_j$
(from  Lemma \ref{lemma84} or Lemma \ref{lemma87}) 
are holomorphic at $P_{\infty}$ with $\psi_j(P_{\infty}) = 1$
and $\phi_j(P_{\infty}) = 0$. Since $z^{-1/3}$
is a local coordinate at $P_{\infty}$, there are expansions 
\begin{align*} 
	\psi_j & = 1 + c_{j,1} z^{-1/3} + c_{j,2} z^{-2/3} + \mathcal{O}(z^{-1}), \\
	\phi_j & =  d_{j,1} z^{-1/3} + d_{j,2} z^{-2/3} + \mathcal{O}(z^{-1}), 
	\end{align*}
	with certain coefficients $c_{1,j}, c_{2,j}, d_{1,j}, d_{2,j}$.
The restrictions to the various sheets then satisfy
\begin{align*} 
	\psi_{j,k}(z) & = 1 + c_{j,1} \omega^{k-1}  z^{-1/3} + c_{j,2} \omega^{1-k} z^{-2/3} + \mathcal{O}(z^{-1}), \\
	\phi_{j,k}(z) & =  d_{j,1} \omega^{k-1}  z^{-1/3} + d_{j,2}  \omega^{1-k} z^{-2/3} + \mathcal{O}(z^{-1}),
\end{align*}
as $z \to \infty$ with $\pm \Im z > 0$,
and due to \eqref{PsiPhidef} and \eqref{PsiPhidef2}
we have an expansion for $\Psi$ and $\Phi$.

Then, similar to the calculations that lead to
\eqref{L1asymp} and \eqref{L2asymp} in the proof of Lemma~\ref{lemma67},  one finds
\begin{align*} 
	(E \circ \Psi)(z) E(z)^{-1} 
	& = I_3 + C_1 + \mathcal{O}(z^{-1}), \\ 
	(E \circ \Phi)(z) E(z)^{-1} & =
	C_2 + \mathcal{O}(z^{-1}), \end{align*}
with  constant matrices $C_1$ and $C_2$ that are strictly lower triangular. 
Hence $M$ defined by \eqref{Mdef} satisfies
\[ M = K \left[ \begin{pmatrix} I_3 + C_1 & -C_2 \\
	C_2 & I_3 + C_1 \end{pmatrix}  + \mathcal O(z^{-1})\right]
	\quad \text{ as } z \to \infty, \]
and \eqref{Masymp} provided that $K$ is  given by
\begin{equation} \label{Kdef} 
	K = \left( I_6 +  \begin{pmatrix} C_1 & -C_2 \\ C_2 & C_1 \end{pmatrix}\right)^{-1}. \end{equation}
It is not difficult to see that the inverse matrix indeed exists, as $C_1$ and $C_2$
are strictly lower triangular. It can actually be shown that
$K$ takes the form
\[ K = I_6 + \begin{pmatrix} K_1 & K_2 \\
	-K_2 & K_1 \end{pmatrix} \]
	with certain strictly lower triangular matrices $K_1$ and $K_2$.
Note that $K$ depends on $\Psi$ and $\Phi$, and thus
on the parity of $N$. 

{\bf RHP-M4} 
	Because of \eqref{Mansatz}  we have to show that
	$(E \circ \Psi) E^{-1}$ and $(E \circ \Phi) E^{-1}$
	are  bounded near $0$, $z^*$ and $z_{\min}, z_{\max}$.
	We will prove it for 	$(E \circ \Psi) E^{-1}$
	as the proof for $(E \circ \Phi) E^{-1}$ is similar.
	
	By \eqref{Edef}, \eqref{Einvdef}, Definition \ref{definition85} and Definition \ref{definition88}, 
	$(E \circ \Psi)(z) E(z)^{-1}$ has entries
	\begin{equation} \label{EpsiEinv} \sum_{l=1}^3 e_j(z, \lambda_l(z))
		\psi_j(z, \lambda_l(z)) \widetilde{e}_k(z, \lambda_l(z)), \quad j,k=1,2,3. \end{equation}
		We recall that 
	$e_j$ and $\psi_j$ are analytic at $P_{0}$,
	while $\widetilde{e}_k$ has at most a double pole,
	Then \eqref{EpsiEinv} is analytic in a neighborhood of
	$0$, except for an isolated singularity at $0$.
	The singularity is removable as \eqref{EpsiEinv} behaves
	like $\mathcal{O}(z^{-2/3})$ as $z \to 0$.
	Thus $(E \circ \Psi) E^{-1}$ remains bounded near $0$. 
		
	At the other points $z_{\min}$, $z_{\max}$ and $z^*$,
	the term with $l=1$ in \eqref{EpsiEinv} remains
	bounded.  For the sum of the other two terms 
	we can argue as in the proof of {\bf RHP-T4}
	in the RH problem \ref{rhpforT} for $T$ and we obtain
	the boundedness as well.
\end{proof}

\section{Final transformation} \label{section9}

The final transformation is

\begin{definition} We define for $z \in \mathbb C \setminus
	\Sigma_S$,
\begin{equation} \label{Rdef} 
	R(z) =  S(z) M(z)^{-1}.
	\end{equation}
\end{definition}

\begin{figure}
	\centering
	\begin{tikzpicture}[scale=3]
		\begin{scope}[ultra thick,decoration={
				markings,
				mark=at position 0.5 with {\arrow{>}}}
			] 
			
			
			\filldraw (-0.85,0) circle (0.7pt);
			\filldraw (0.85,0) circle (0.7pt);
			\draw (-0.93,0) node [below]{$-1$};
			\draw (0.88,0) node [left, below]{$1$};
			\draw (0,0.74)  node[above]{$\mathbb T$};
			\coordinate (a) at (0.85,0);
			\coordinate (b) at (0,0.8);
			\coordinate (c) at (-0.85,0);
			\coordinate (d) at (0,-0.8);
			\coordinate (e) at (0.85,0);             
			\path[draw,use Hobby shortcut,closed=true]  (a)..(b)..(c)..(d)..(e); 
			\draw (0,0.5)  node[above]{$|z|=1-\eta$};
			\coordinate (a) at (0.75,0);
			\coordinate (b) at (0,0.7);
			\coordinate (c) at (-0.75,0);
			\coordinate (d) at (0,-0.7);
			\coordinate (e) at (0.75,0);               
			\path[draw,use Hobby shortcut,closed=true]  (a)..(b)..(c)..(d)..(e); 	
			\draw (0,0.88)  node[above]{$|z|=1+\eta$};
			\coordinate (a) at (0.95,0);
			\coordinate (b) at (0,0.9);
			\coordinate (c) at (-0.95,0);
			\coordinate (d) at (0,-0.9);
			\coordinate (e) at (0.95,0);               
			\path[draw,use Hobby shortcut,closed=true]  (a)..(b)..(c)..(d)..(e); 
			\draw [postaction={decorate}] (0,-0.7) node{};
			\draw [postaction={decorate}] (0,-0.8) node{};
			\draw [postaction={decorate}] (0,-0.9) node{};
		\end{scope}
	\end{tikzpicture}
	
	\caption{Contour $\Sigma_R = \mathbb T \cup \{|z| = 1\pm \eta \}$ for the RH problem \ref{rhpforR} for $R$.
		\label{fig:SigmaR}} 
\end{figure}
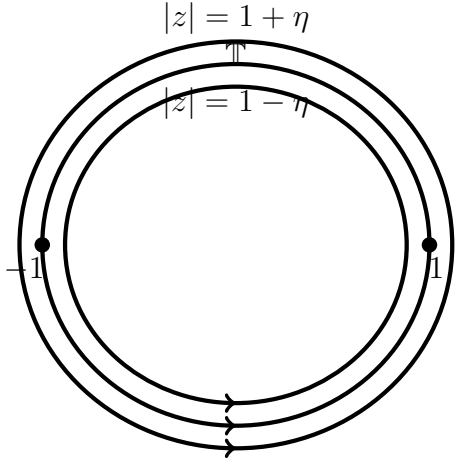

With the definition \eqref{Rdef} we restore the property that
$\det R = 1$. 
We have the following RH problem for $R$.

\begin{rhproblem} \label{rhpforR} $R$ satisfies the following
	\begin{description}
	\item[RHP-R1]
	$R$ has analytic continuation to $\mathbb C \setminus \Sigma_R$
	where $\Sigma_R$ is the union of the three circles
	$\mathbb T$ and $|z| = 1 \pm \eta$.
	\item[RHP-R2] $R_+ = R_- J_R$ on $\Sigma_R$ where
		\begin{equation} \label{Rjump} 
			J_R = M J_S M^{-1}. \end{equation}
	\item[RHP-R3] as $z \to \infty$
	\begin{equation} \label{Rasymp} 
		R(z) = I_6 + \mathcal{O}(z^{-1})  \end{equation}
	\end{description}
\end{rhproblem}
\begin{proof}
	{\bf RHP-R1}  By {\bf RHP-S2} and {\bf RHP-M2},  $S$ and $M$ have the same jumps
	on the real line, and therefore $R$ is analytic
	across the real line, except possibly at the points
	$0, z_{\min}, z_{\min}, z^*$ where $M$ is not invertible.
	At these points we have that $R$ remains bounded,
	due to {\bf RHP-S4} and {\bf RHP-M4}, and thus
	the singularities are removable.

{\bf RHP-R2} is immediate from \eqref{Rdef}, while
{\bf RHP-R3} is immediate from {\bf RHP-S3} and {\bf RHP-M3}.
\end{proof}
The jump matrices $J_R$ tend to the identity matrix
at an exponential rate 
as $n \to \infty$, and we obtain the following.

\begin{corollary} \label{corol93}
	For some $c > 0$, we have 
\begin{equation} \label{Restimate} R(z) = I_6 + \mathcal{O}\left( \frac{e^{-cN}}{1+|z|}\right) \end{equation}
	as $N \to \infty$, uniformly for $z \in \mathbb C \setminus \Sigma_R$.
\end{corollary}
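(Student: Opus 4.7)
The plan is the standard small-norm wrap-up of a Deift-Zhou steepest descent analysis: I would establish $\|J_R - I_6\|_{L^\infty(\Sigma_R) \cap L^2(\Sigma_R)} = \mathcal{O}(e^{-cN})$ on the compact contour $\Sigma_R = \mathbb{T} \cup \{|z|=1\pm\eta\}$ and then invoke small-norm Riemann-Hilbert theory (as in \cite{Dei99}) to extract \eqref{Restimate}.

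First I would estimate $J_R - I_6$ on the lens boundaries $|z| = 1 \pm \eta$. Here $M$ is analytic, so $J_R = M J_S M^{-1}$. By \eqref{Sjump2} the only non-constant entries of $J_S$ are proportional to $e^{2N\varphi_j}$ for $j=1,2$, and Lemma \ref{lemma73} gives $|e^{2N\varphi_j}| \leq e^{-cN}$ uniformly on $|z|=1\pm\eta$. For $\eta$ small enough, these circles avoid the real set $\{0,z_{\min},z_{\max},z^*\}$ on which $\det M = (\det E)^2$ vanishes, so $M^{\pm 1}$ are uniformly bounded there and $\|J_R - I_6\|_{L^\infty(|z|=1\pm\eta)} = \mathcal{O}(e^{-cN})$. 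On $\mathbb{T}$ both $S$ and $M$ jump; a direct computation from $R = SM^{-1}$ yields $J_R = M_-(J_S J_M^{-1}) M_-^{-1}$, and comparing \eqref{Sjump1} with \eqref{Mjump1} shows that the first two $\Pi$-diagonal blocks of $J_S J_M^{-1}$ collapse to $I_2$, leaving
\[ J_S J_M^{-1} - I_6 = \Pi\left[ 0_2 \oplus 0_2 \oplus \begin{pmatrix} 0 & (-1)^N e^{-2N\varphi_3} \\ 0 & 0 \end{pmatrix} \right] \Pi^{-1}. \]
The inequality \eqref{g3ongamma} combined with the definition \eqref{phijdef} yields $\Re \varphi_3 > 0$ uniformly on $\mathbb{T}$ (a small computation using $|z|=1$ so $\log|z|=0$), hence $\|J_S J_M^{-1} - I_6\|_{L^\infty(\mathbb{T})} = \mathcal{O}(e^{-cN})$, and boundedness of $M_-^{\pm 1}$ on $\mathbb{T}$ gives $\|J_R - I_6\|_{L^\infty(\mathbb{T})} = \mathcal{O}(e^{-cN})$ as well.

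With these estimates in hand, standard small-norm theory produces the unique solution
\[ R(z) = I_6 + \frac{1}{2\pi i} \int_{\Sigma_R} \frac{R_-(s)(J_R(s)-I_6)}{s-z}\, ds \]
together with $\|R_- - I_6\|_{L^2(\Sigma_R)} = \mathcal{O}(e^{-cN})$; since $\Sigma_R$ is compact, the Cauchy kernel is $|s-z|^{-1} = \mathcal{O}((1+|z|)^{-1})$ uniformly in $s \in \Sigma_R$ for $z$ bounded away from $\Sigma_R$, and \eqref{Restimate} follows. The main bookkeeping issue is only to confirm boundedness of $M^{\pm 1}$ on $\Sigma_R$: for generic parameters the points of $\{0,z_{\min},z_{\max},z^*\} \subset \mathbb{R}$ are disjoint from $\Sigma_R$ once $\eta$ is chosen small, and in the borderline configuration $z^* = 1 \in \mathbb{T}$ one exploits that by RHP-S4 and RHP-M4 the singularities of $S$ and $M$ at $z^*$ come from the same right factor $\bigl( \begin{smallmatrix} E & 0 \\ 0 & E \end{smallmatrix} \bigr)$, so that $R = SM^{-1}$ remains bounded at $z^*$ and the small-norm argument is unaffected.
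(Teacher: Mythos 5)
Your argument is the same small-norm wrap-up as the paper's, but you treat the jump on $\mathbb{T}$ more carefully, and in doing so you quietly repair a sloppiness in the paper's own proof. The paper asserts ``$J_S = I_6 + \mathcal{O}(e^{-cN})$ uniformly on $\Sigma_R$,'' which is literally false on $\mathbb{T}$: by \eqref{Sjump1} the first two $\Pi$-diagonal blocks of $J_S$ there are the constant matrix $\bigl(\begin{smallmatrix} 0 & (-1)^N \\ -(-1)^N & 0 \end{smallmatrix}\bigr)$, which is not near $I_2$. What matters, and what you correctly compute, is that these constant blocks are exactly those of $J_M$ in \eqref{Mjump1}, so $J_S J_M^{-1} - I_6$ reduces to the single rank-one entry $(-1)^N e^{-2N\varphi_3}$, and $\Re\varphi_3 > 0$ on $\mathbb{T}$ (already recorded by the paper just after RH problem \ref{rhpforT}). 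Your treatment of the lens circles via Lemma \ref{lemma73}, and the Cauchy-kernel decay giving the $(1+|z|)^{-1}$ factor, match the paper.

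Your handling of the borderline configuration $z^* = 1 \in \mathbb{T}$ is a genuine catch that the paper does not discuss explicitly in this proof, but the step you supply is not quite enough: boundedness of $R = SM^{-1}$ at $z^*$ gives boundedness of $J_R$, not the uniform exponential smallness needed for the small-norm estimate. The missing observation is structural: with $M = \widehat{M}\begin{pmatrix} E & 0 \\ 0 & E\end{pmatrix}$ as in \eqref{Mhatdef}, and writing the rank-one part of $J_S J_M^{-1}-I_6$ as $(-1)^N e^{-2N\varphi_3}\,\Pi(e_5 e_6^T)\Pi^{-1} = (-1)^N e^{-2N\varphi_3}\, e_3 e_6^T$, one gets
\[
M_-\bigl(J_S J_M^{-1}-I_6\bigr)M_-^{-1}
 = (-1)^N e^{-2N\varphi_3}\;\widehat{M}_- \begin{pmatrix} 0 & P_3 \\ 0 & 0 \end{pmatrix}\widehat{M}_-^{-1},
\qquad P_3 = (E e_3)(e_3^T E^{-1}),
\]
and $P_3$ is the spectral projection of $W(z)$ onto the $\lambda_3$-eigenspace, which stays bounded near $z^*$ because $\lambda_3$ remains simple there (generically $z^* \notin \{z_{\min}, z_{\max}\}$, as noted after Lemma \ref{lemma53}). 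Combined with the boundedness of $\widehat{M}^{\pm1}$ from RHP-M4, this gives the uniform $\mathcal{O}(e^{-cN})$ bound on $\mathbb{T}$ even through $z^*$, which completes your argument.
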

\begin{proof}
	Because of Lemma \ref{lemma73} and the expressions
	\eqref{Sjump1} and \eqref{Sjump2} for $J_S$ we have
	\[ J_S = I_6 + \mathcal{O}(e^{-cN}), \quad\text{ as } N \to \infty, \]
	uniformly on $\Sigma_R$. Then by \eqref{Rjump}
	and the fact that $M$ remains uniformly on $\Sigma_R$
	we obtain the same estimate
\[ J_R = I_6 + \mathcal{O}(e^{-cN}) \quad \text{ as } N \to \infty, \]
as $N \to \infty$, uniformly on $\Sigma_R$.
The corollary follows from standard estimates
on small norm RH problems \cite{Dei99}.
\end{proof} 

\section{Remaining proofs} \label{section10}
\subsection{Proof of Theorem \ref{theorem13}} \label{section101}

Starting from \eqref{PNinY}, and using \eqref{Xdef} and \eqref{Tdef}, we get
\begin{multline} \label{PNinT} P_N(z) E(z)  = \begin{pmatrix} I_3 & 0_3 \end{pmatrix}
	X(z) \begin{pmatrix} I_3 \\ 0_3 \end{pmatrix} \\
	 =  \begin{pmatrix} I_3 & 0_3 \end{pmatrix}
	T(z) \begin{pmatrix} I_3 \\ 0_3 \end{pmatrix}
	\diag \left( e^{Ng_1(z)},
	 e^{Ng_2(z)}, e^{Ng_3(z)} \right).
	\end{multline}
Thus, with $G(z)$ as in \eqref{Gform} it is 
\begin{align} \label{PNinT2} 
	P_N(z)  =  \begin{pmatrix} I_3 & 0_3 \end{pmatrix}
	T(z) \begin{pmatrix} I_3 \\ 0_3 \end{pmatrix}
	E(z)^{-1} G(z)^N.
	\end{align}
For $z$ outside the lens around $\mathbb T$, that is,
for $|z| < 1- \eta$ or $|z| > 1+\eta$, we have \eqref{Sdef}
and thus 
\begin{equation} \label{PNinS} 
	P_N(z) = L^N \begin{pmatrix} I_3 & 0_3 \end{pmatrix}
		S(z) \begin{pmatrix} I_3 \\ 0_3 \end{pmatrix} E(z)^{-1} G(z)^N.
			\end{equation} 
Since $S = RM$ with $R$ satisfying the estimate from
Corollary \ref{corol93}, we obtain
\eqref{PNasymp} with  
\begin{equation} \label{ANdef}
	A_N(z) = \begin{pmatrix} I_3 & 0_3 \end{pmatrix}
	M(z) \begin{pmatrix} I_3 \\ 0_3 \end{pmatrix} E(z)^{-1}.
\end{equation} 
Since $\eta > 0$ can be taken arbitrarily close
to $0$, we obtain \eqref{PNasymp} uniformly
for $z$ in compact subsets of $\overline{\mathbb C} \setminus \mathbb T$.

The items (a)-(d) in Theorem \ref{theorem13} were also established along the way. Indeed $G$ is given
by \eqref{Gform}, with $g_1, g_2, g_3$
as in \eqref{gjdef} and \eqref{gjasymp} is satisfied because of \eqref{hatgjasymp}. The expansion 
\eqref{GNasymp} holds because of \eqref{EgEinvexpansion}. 

Since $M$ depends on the parity of $N$, see RH problem \ref{rhpforM}, there are $M_e$ and $M_o$
such that
\[ M(z) = \begin{cases} M_e(z), & \text{ if $N$ is even}, \\
	M_o(z), & \text{ if $N$ is odd}.
	\end{cases} \]
Thus, by \eqref{ANdef}, also $A_N$ depends only on 
the parity of $N$. The limit \eqref{ANasymp} follows from \eqref{Masymp} and \eqref{ANdef}.

By its definition \eqref{ANdef}, $A_N$ is analytic in $\mathbb C \setminus (\mathbb R \cup \mathbb T)$.
Because of \eqref{Ejump} and \eqref{Mjump1},
$A_N$ has analytic continuation across
$(-\infty,z_{\min})$ and $(z_{\max},\infty)$. 
On $[z_{\min}, z_{\max}]$ we have the jump
\eqref{ANjump1} because of \eqref{Ejump} and \eqref{Mjump2}, which shows that $A_e$ is actually
also analytic across the full real line.

Finally, to show that $G$ has the jump \eqref{Gjump1} on $[z_{\min}, z_{\max}]$,
we use \eqref{Gform}, \eqref{Ejump} and
the fact that by \eqref{gjdef} and 
\eqref{elljpm}
\[ g_{j+} - g_{j-} = \widehat{g}_{j+}
	- \widehat{g}_{j-} - (\ell_j^+ - \ell_j^-) 
	= \widehat{g}_{j+} - \widehat{g}_{j-}
	\pm \pi i. \]
Since $\widehat{g}_1$ satisfies
\eqref{g1g2g3pm4} and \eqref{g1g2g3pm5},
while  $\widehat{g}_2$ and $\widehat{g}_3$
are analytic across $[z_{\min}, z_{\max}]$
we find that $e^{g_{1+}} = e^{g_{1-}}$,
$e^{g_{2+}} = - e^{g_{2-}}$ and
$e^{g_{3+}} = - e^{g_{3-}}$ on  
$[z_{\min}, z_{\max}]$ and
\eqref{Gjump1} follows.

\subsection{Proof of Theorem \ref{theorem14} (b)}
\label{section102}

For $z \in \mathbb C$, we use $z^{(1)}, z^{(2)}, z^{(3)}$
to denote the three points on the Riemann surface, whose
$z$-coordinate is equal to $z$. Then for $q \in \mathcal R \setminus \{ P_{\infty}\}$ one has  that
\[ G_{P_{\infty}}(z^{(1)}, q) + G_{P_{\infty}}(z^{(2)}, q)
	+
	G_{P_{\infty}}(z^{(3)}, q) \]
is harmonic for $z$ in $\mathbb C \setminus \{z(q) \}$,
with 
\[ G_{P_{\infty}}(z^{(1)}, q) + G_{P_{\infty}}(z^{(2)}, q) +
G_{P_{\infty}}(z^{(3)}, q)  = 
\begin{cases} - \log |z| + o(1), & \text{ as } z \to \infty, \\
	\log |z-z(q)| + O(1), &\text{ as } z \to z(q). 
\end{cases} \]
This implies 
\[ G_{P_{\infty}}(z^{(1)}, q) + G_{P_{\infty}}(z^{(2)}, q)
+
G_{P_{\infty}}(z^{(3)}, q) = - \log |z-z(q)|. \]

By \eqref{gpdef} and \eqref{gjdef} we then
have
\begin{multline*} 
 \Re \left(	g_1(z) + g_2(z) + g_3(z) \right) \\
	= -3  \int \left(
		G_{P_{\infty}}(z^{(1)}, q) + G_{P_{\infty}}(z^{(2)}, q)
		+
		G_{P_{\infty}}(z^{(3)}, q) \right) d\mu(q) \\
	 = 3 \int \log |z-z(q)| d\mu(q) 
	 = 3 \int \log |z- s| d\mu_*(s),
\end{multline*}
which is \eqref{Unu}.

\subsection{Proof of Theorem \ref{theorem14} (c)}
\label{section103}

Both $\det A_e$ and $\det A_o$ are analytic
functions in $\mathbb C \setminus \mathbb T$ 
and both tend to $1$ at infinity.
Thus they are not identically zero in $|z| > 1$,
and there can be no accumulation point of zeros in
the domain $|z| > 1$ exterior to the unit circle.

If $\det A_e$ or $\det A_o$ would be identically zero inside
the unit circle, then it follows from \eqref{detPNasymp}
that 
\begin{equation} \label{ChebyshevT} \sup_{|z| \leq 1} \left|\det P_N(z) \right|  \leq C e^{-cN}  \end{equation}
for some $c > 0, C > 0$, and for $N$ even or odd.
However, $z^{-3N} \det P_{N}(z)$ is analytic with
value $1$ at infinity, and hence by the maximum principle
applied to the domain $|z| > 1$ we have
\[ \sup_{z \in \mathbb T} \det |P_N(z) | \geq 1, \]
which is a contradiction with \eqref{ChebyshevT}
if $N$ is large enough. 

Thus $\det A_e$ and $\det A_o$ have only 
finitely many zeros outside of any ring domain 
$1-\eta < |z| < 1 + \eta$ around the unit circle.
Then by \eqref{detPNasymp} all but finitely many
(independent of $N$) zeros of $\det P_N$ are in
$1-\eta < |z| < 1 + \eta$ for $N$ large enough.
We also obtain from \eqref{detPNasymp}
and \eqref{Unu} that 
\begin{equation} \label{detPNasym1} 
	\lim_{N \to \infty} 
	\frac{1}{3N} \log | \det P_N(z)|  
 	= \int \log |z-s| d\mu_*(s)
	\end{equation}
for every $z \in \mathbb C \setminus \mathbb T$
that is not a zero of $A_e$ or $A_o$. 

Let $\nu_N = \nu(\det P_N)$ denote the
normalized zero counting measure as in \eqref{nudetPNdef}. Then \eqref{detPNasym1} can be equivalently stated as  
\begin{equation} \label{detPNasym2} 
	\lim_{N \to \infty} 
\int \log |z-s| d\nu_N(s) 
= \int \log |z-s| d\mu_*(s),
\end{equation}
for every $z \in \mathbb C \setminus \mathbb T$
that is not a zero of $A_e$ or $A_o$,
and in particular it is true almost everywhere
with respect to two-dimensional Lebesgue measure.

Let $\nu$ be a weak$^*$
limit of a subsequence of $(\nu_N)_N$. Then
$\nu$ is a probability measure on $\mathbb T$,
and by  the lower envelope theorem \cite[Theorem I.6.9]{ST97}
\[ \int \log|z-s| d\nu(s) = \int \log|z-s| d\mu_*(s),\]
almost everywhere. Then the unicity theorem 
\cite[Theorem II.2.1]{ST97} implies that $\nu = \mu_*$.

We proved part (d) of Theorem \ref{theorem14}.

\appendix

\section{Appendix: random tilings} \label{appendixA}

\begin{figure}[t]
	\begin{center}
		\begin{tikzpicture}[scale=0.3,transform shape](15,10)(0,0)
			
			\foreach \x/\y in {-3/1,-3/2,-3/3,-3/4, 
				-1.5/0.5,-1.5/1.5,-1.5/2.5,-1.5/3.5, -1.5/5.5, 
				0/0, 0/1, 1.5/-0.5, 1.5/7.5, 
				3/1, 3/3, 3/4, 4.5/-0.5, 4.5/6.5, 
				6/8, 7.5/-0.5,7.5/3.5,7.5/4.5, 7.5/6.5, 7.5/7.5, 
				9/2,9/3,9/4, 
				10.5/1.5,10.5/2.5,10.5/3.5, 10.5/6.5, 10.5/7.5,
				12/6,12/7,13.5/5.5, 13.5/6.5}
			{ \foreach \v in {1.732}	
				{ \draw[draw=blue, very thick] (\x,\y*\v) --++ (1/2,-\v/2);
					\draw[draw=black, fill=blue!50] (\x-1/2, \y*\v+\v/2) --++ (1.5,-\v/2) --++ (0,-\v) --++ (-1.5,\v/2) -- cycle;
				}
			}
			
			\foreach \x/\y in {-3/5,-3/6, -1.5/6.5, 
				0/2, 0/3, 0/5, 0/7,
				1.5/0.5, 1.5/2.5, 1.5/3.5, 1.5/5.5,
				3/-1, 3/6, 3/7, 
				4.5/0.5, 4.5/2.5, 4.5/3.5, 4.5/7.5,
				6/-1, 6/1, 6/3, 6/4, 6/6, 
				7.5/1.5,  9/-1, 9/6, 9/7, 10.5/-0.5,
				12/0, 12/1, 12/2, 12/3,
				13.5/0.5, 13.5/1.5, 13.5/2.5, 13.5/3.5}
			{ \foreach \v in {1.732}	
				{
					\draw[draw=black, fill=red] (\x-1/2, \y*\v-\v/2) --++ (1.5,\v/2) --++ (0,\v) --++ (-1.5,-\v/2) -- cycle;
				}
			}		
			
			\foreach \x/\y in {-3/4.5, -1.5/4,-1.5/6, 
				0/1.5, 0/4.5, 0/6.5, 1.5/0, 1.5/2, 1.5/5, 1.5/8,
				3/-1.5, 3/1.5, 3/4.5, 3/5.5, 3/8.5,
				4.5/-2, 4.5/0, 4.5/2, 4.5/5, 4.5/7, 4.5/9,  
				6/-1.5, 6/0.5, 6/2.5, 6/5.5, 6/8.5,
				7.5/0, 7.5/1, 7.5/5, 7.5/8, 
				9/0.5, 9/4.5, 9/5.5, 10.5/4, 10.5/5, 12/4.5}
			{ \foreach \v in {1.732}	
				{ \draw[draw=yellow!100!black, very thick] (\x+1/2,\y*\v) --++ (1,0);
					\draw[draw=black, fill=yellow!100!black] (\x-1/2, \y*\v) --++ (1.5,-\v/2) --++ (1.5,\v/2) --++ (-1.5,\v/2) -- cycle;
				}
			}		
			
	\end{tikzpicture} \end{center}
	\caption{Lozenge tiling of a hexagon \label{fig:tiling}} \end{figure}
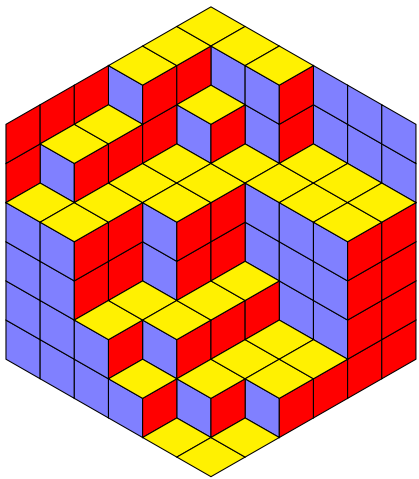

In this appendix we briefly describe how the
matrix valued orthogonal polynomials considered in this
paper arise in the study of random tiling models of a hexagon.

A regular hexagon can be covered with lozenges of
three types as shown in Figure~\ref{fig:tiling}.
A tiling is equivalent to a family of non-intersecting paths
going from left to right that follow the blue and red lozenges
(in the coloring of Figure~\ref{fig:tiling}).
The paths lie on an underlying directed graph $G=(V,E)$ with
vertex set $V = \mathbb Z^2$ and edges going
from $(j,k)$ to either $(j+1,k)$ or $(j+1,k+1)$.
The non-intersecting paths start at consecutive
vertices $(0,0), (0,1), \ldots, (0, N-1)$ and
end at $(2N,N), (2N,N+1), \ldots (2N,2N-1)$.
Here $N$ is the side length of the hexagon.

Given a weighting $w : E \to \mathbb R^+$ of the edges,
we assign a weight to a non-intersecting path system  $\mathcal P$
\[ w(\mathcal P) = \prod_{e\in \mathcal P} w(e) \]
as well as a probability 
\[ \Prob(\mathcal P) = \frac{1}{Z} w(\mathcal P) \]
with $Z = \sum_{\mathcal P'} w(\mathcal P')$. 
This random model is a determinantal point process on the 
set of vertices $V$, in the sense that
there is a kernel $K : V \times V \to \mathbb R$
such that for any set of distinct vertices $v_1, \ldots, v_k$
one has that 
\[ \det \left[ K(v_i,v_j) \right]_{i,j=1}^k \]
is equal to the probability that the non-intersecting
path system passes through all of the vertices $v_1, \ldots, v_k$. 
This is a consequence of the Lindstr\"om-Gessel-Viennot lemma \cite{GV85, Lin73} which says  that the probability 
that the non-intersecting paths cover a given set of vertices 
is equal to a product of determinants
of a certain type, combined with the Eynard-Mehta theorem \cite{EM98}
that says any such probability measure is determinantal.
Eynard and Mehta \cite{EM98} also gave a formula for $K$.

Given a weighting $w$ we have two functions $a: V \to \mathbb R^+$,
$b: V \to \mathbb R^+$ defined on the vertices,
namely
\[ a_{jk} = w((j-1,k-1), (j,k-1)), \quad b_{jk} = w((j-1,k-1), (j,k)). \] 
Note that we write $a_{jk}$ and $b_{jk}$ instead of $a(j,k)$ and $b(j,k)$. The weighting is periodic with periods $p$ and $q$
if 
\begin{equation} \label{abper} 
	a_{j+np,k+mq} = a_{jk}, \quad b_{j+np,k+mq} = b_{jk} 
	\end{equation}
for every $n, m \in \mathbb Z$.   See Figure \ref{fig:paths}
for $3\times 3$ doubly periodic weights on the tiling from Figure \ref{fig:tiling}.  As indicated in the figure, given
 a tiling of the hexagon, we may also consider the 
 weights to be on the lozenges, where a blue
 lozenge carries the weight $a_{jk}$ and a red lozenge
 carries the weight $b_{jk}$ if 
 the vertex $(j-1,k-1)$ is on its left boundary.
In this setup the horizontal (yellow) tiles have weight $1$.

\begin{figure}[t]
	\begin{center}
		\begin{tikzpicture}[scale=0.5,transform shape](15,10)(0,0)
			
			\foreach \x/\y in {-3/1,-3/2,-3/3,-3/4, 
				-1.5/0.5,-1.5/1.5,-1.5/2.5,-1.5/3.5, -1.5/5.5, 
				0/0, 0/1, 1.5/-0.5, 1.5/7.5, 
				3/1, 3/3, 3/4, 4.5/-0.5, 4.5/6.5, 
				6/8, 7.5/-0.5,7.5/3.5,7.5/4.5, 7.5/6.5, 7.5/7.5, 
				9/2,9/3,9/4, 
				10.5/1.5,10.5/2.5,10.5/3.5, 10.5/6.5, 10.5/7.5,
				12/6,12/7,13.5/5.5, 13.5/6.5}
			{ \foreach \v in {1.732}	
				{ 
					\draw[draw=blue!50, very thick] (\x-1/2,\y*\v) --++ (1.5,-\v/2);
					\draw[draw=black] (\x-1/2, \y*\v+\v/2) --++ (1.5,-\v/2) --++ (0,-\v) --++ (-1.5,\v/2) -- cycle;
				}
			}
			{ \foreach \h/\v in {0.2/1.732}
				{
					\draw (-3+\h,1*\v) node{\Huge $a_{11}$};
					\draw (-3+\h,2*\v) node{\Huge $a_{12}$};
					\draw (-3+\h,3*\v) node{\Huge $a_{13}$};
					\draw (-3+\h,4*\v) node{\Huge $a_{11}$};
					\draw (-1.5+\h,0.5*\v) node{\Huge $a_{21}$};
					\draw (-1.5+\h,1.5*\v) node{\Huge $a_{22}$};
					\draw (-1.5+\h,2.5*\v) node{\Huge $a_{23}$};
					\draw (-1.5+\h,3.5*\v) node{\Huge $a_{21}$};
					\draw (-1.5+\h,5.5*\v) node{\Huge $a_{23}$};
					\draw (0+\h,0*\v) node{\Huge $a_{31}$};
					\draw (0+\h,1*\v) node{\Huge $a_{32}$};
					\draw (1.5+\h,-0.5*\v) node{\Huge $a_{11}$};
					\draw (1.5+\h,7.5*\v) node{\Huge $a_{13}$};
					\draw (3+\h,1*\v) node{\Huge $a_{23}$};
					\draw (3+\h,3*\v) node{\Huge $a_{22}$};
					\draw (3+\h,4*\v) node{\Huge $a_{23}$};
					\draw (4.5+\h,-0.5*\v) node{\Huge $a_{32}$};
					\draw (4.5+\h,6.5*\v) node{\Huge $a_{33}$};
					\draw (6+\h,8*\v) node{\Huge $a_{12}$};
					\draw (7.5+\h,-0.5*\v) node{\Huge $a_{23}$};
					\draw (7.5+\h,3.5*\v) node{\Huge $a_{21}$};
					\draw (7.5+\h,4.5*\v) node{\Huge $a_{22}$};
					\draw (7.5+\h,6.5*\v) node{\Huge $a_{21}$};
					\draw (7.5+\h,7.5*\v) node{\Huge $a_{22}$};
					\draw (9+\h,2*\v) node{\Huge $a_{33}$};
					\draw (9+\h,3*\v) node{\Huge $a_{31}$};
					\draw (9+\h,4*\v) node{\Huge $a_{32}$};
					\draw (10.5+\h,1.5*\v) node{\Huge $a_{13}$};
					\draw (10.5+\h,2.5*\v) node{\Huge $a_{11}$};
					\draw (10.5+\h,3.5*\v) node{\Huge $a_{12}$};
					\draw (10.5+\h,6.5*\v) node{\Huge $a_{12}$};
					\draw (10.5+\h,7.5*\v) node{\Huge $a_{13}$};
					\draw (12+\h,6*\v) node{\Huge $a_{22}$};
					\draw (12+\h,7*\v) node{\Huge $a_{23}$};
					\draw (13.5+\h,5.5*\v) node{\Huge $a_{32}$};
					\draw (13.5+\h,6.5*\v) node{\Huge $a_{33}$};
				}
			}
			
			\foreach \x/\y in {-3/5,-3/6, -1.5/6.5, 
				0/2, 0/3, 0/5, 0/7,
				1.5/0.5, 1.5/2.5, 1.5/3.5, 1.5/5.5,
				3/-1, 3/6, 3/7, 
				4.5/0.5, 4.5/2.5, 4.5/3.5, 4.5/7.5,
				6/-1, 6/1, 6/3, 6/4, 6/6, 
				7.5/1.5,  9/-1, 9/6, 9/7, 10.5/-0.5,
				12/0, 12/1, 12/2, 12/3,
				13.5/0.5, 13.5/1.5, 13.5/2.5, 13.5/3.5}
			{ \foreach \v in {1.732}	
				{\draw[draw=red, very thick] (\x-1/2,\y*\v) --++ (1.5,\v/2);
					\draw[draw=black] (\x-1/2, \y*\v-\v/2) --++ (1.5,\v/2) --++ (0,\v) --++ (-1.5,-\v/2) -- cycle;
				}
			}		
			
			{ \foreach \h/\v in {0.2/1.732}
				{
					\draw (-3+\h,5*\v) node{\Huge $b_{12}$};
					\draw (-3+\h,6*\v) node{\Huge $b_{13}$};
					\draw (-1.5+\h,6.5*\v) node{\Huge $b_{21}$};
					\draw (0+\h,2*\v) node{\Huge $b_{33}$};
					\draw (0+\h,3*\v) node{\Huge $b_{31}$};
					\draw (0+\h,5*\v) node{\Huge $b_{33}$};
					\draw (0+\h,7*\v) node{\Huge $b_{32}$};
					\draw (1.5+\h,0.5*\v) node{\Huge $b_{12}$};
					\draw (1.5+\h,2.5*\v) node{\Huge $b_{11}$};
					\draw (1.5+\h,3.5*\v) node{\Huge $b_{12}$};
					\draw (1.5+\h,5.5*\v) node{\Huge $b_{11}$};
					\draw (3+\h,-1*\v) node{\Huge $b_{21}$};
					\draw (3+\h,6*\v) node{\Huge $b_{22}$};
					\draw (3+\h,7*\v) node{\Huge $b_{23}$};
					\draw (4.5+\h,0.5*\v) node{\Huge $b_{33}$};
					\draw (4.5+\h,2.5*\v) node{\Huge $b_{32}$};
					\draw (4.5+\h,3.5*\v) node{\Huge $b_{33}$};
					\draw (4.5+\h,7.5*\v) node{\Huge $b_{32}$};
					\draw (6+\h,-1*\v) node{\Huge $b_{12}$};
					\draw (6+\h,1*\v) node{\Huge $b_{11}$};
					\draw (6+\h,3*\v) node{\Huge $b_{13}$};
					\draw (6+\h,4*\v) node{\Huge $b_{11}$};
					\draw (6+\h,6*\v) node{\Huge $b_{13}$};
					\draw (7.5+\h,1.5*\v) node{\Huge $b_{22}$};
					\draw (9+\h,-1*\v) node{\Huge $b_{33}$};
					\draw (9+\h,6*\v) node{\Huge $b_{31}$};
					\draw (9+\h,7*\v) node{\Huge $b_{32}$};
					\draw (10.5+\h,-0.5*\v) node{\Huge $b_{11}$};
					\draw (12+\h,0*\v) node{\Huge $b_{22}$};
					\draw (12+\h,1*\v) node{\Huge $b_{23}$};
					\draw (12+\h,2*\v) node{\Huge $b_{21}$};
					\draw (12+\h,3*\v) node{\Huge $b_{22}$};
					\draw (13.5+\h,0.5*\v) node{\Huge $b_{33}$};
					\draw (13.5+\h,1.5*\v) node{\Huge $b_{31}$};
					\draw (13.5+\h,2.5*\v) node{\Huge $b_{32}$};
					\draw (13.5+\h,3.5*\v) node{\Huge $b_{33}$};
			}}	
			
			\foreach \x/\y in {-3/4.5, -1.5/4,-1.5/6, 
				0/1.5, 0/4.5, 0/6.5, 1.5/0, 1.5/2, 1.5/5, 1.5/8,
				3/-1.5, 3/1.5, 3/4.5, 3/5.5, 3/8.5,
				4.5/-2, 4.5/0, 4.5/2, 4.5/5, 4.5/7, 4.5/9,  
				6/-1.5, 6/0.5, 6/2.5, 6/5.5, 6/8.5,
				7.5/0, 7.5/1, 7.5/5, 7.5/8, 
				9/0.5, 9/4.5, 9/5.5, 10.5/4, 10.5/5, 12/4.5}
			{ \foreach \v in {1.732}	
				{ 
					\draw[draw=black] (\x-1/2, \y*\v) --++ (1.5,-\v/2) --++ (1.5,\v/2) --++ (-1.5,\v/2) -- cycle;
				}
			}		
	\end{tikzpicture} \end{center}
	\caption{Non-intersecting paths and $3 \times 3$ doubly periodic weights
		on the tiling from Figure \eqref{fig:tiling}.
		 \label{fig:paths}} \end{figure}
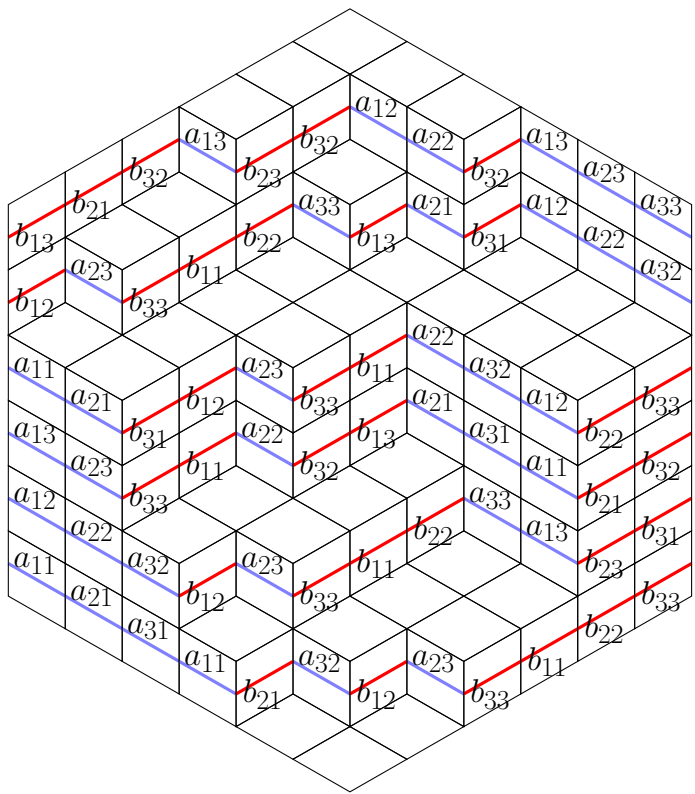

For doubly periodic weightings there is an alternative formula
for the correlation kernel $K$ given by Duits and Kuijlaars \cite{DK21}.
We present it here for the hexagon of size $qN$.
Then we have $2qN$ transition matrices of size $q \times q$
of the form
\[ T_j(z) = \begin{pmatrix} a_{j1} & b_{j1} & 0 & \cdots & \cdots & 0 \\
		0 & a_{j2} & b_{j2} & 0 & \cdots & \vdots \\
		\vdots & \ddots  & \ddots & \ddots & \ddots   & \vdots  \\
		\vdots &  & \ddots & \ddots & \ddots & 0  \\
		0 & \cdots & \cdots  & 0 & a_{j,q-1} &  b_{j,q-1} \\
		b_{jq} z & 0 & \cdots & \cdots & 0 & a_{jq} \end{pmatrix},
		\quad j=1,\ldots,2qN. \]
The matrix valued function
$T_j(z)$ models the transition from horizontal level $j-1$
to level $j$, and it takes into account the periodicity
\eqref{abper} in the second argument. For each $j, j' \in \{0,\ldots, 2qN\}$ 
with $j < j'$ we also write
\[ T_{j \to j'}(z)  = T_{j+1}(z) \cdot T_{j+2}(z) \cdot \cdots \cdot T_{j'-1}(z) \cdot T_{j'}(z) \]
and this models the transition from level $j$ to $j'$.

Take two vertices $v_1$, $v_2$ with coordinates
\[ v_1 = (j, qNy + k),  \qquad v_2 = (j', q Ny'+ k')\]
with integer $Ny,Ny'$ and $j, j' \in \{0,1, \ldots 2N \}$,
$k,k'\in \{0, \ldots, q-1\}$. 
Then $K(v_1,v_2)$ is equal to the $(k+1,k'+1)$th entry of the matrix
\begin{multline} \label{DKkernel} - \frac{\chi_{j>j'}}{2\pi i} \oint_{\gamma}
	\frac{T_{j' \to j}(z)}{z^{N(y-y')}} \frac{dz}{z} 
	+ \frac{1}{(2\pi i)^2} \oint_\gamma \oint_\gamma
	\frac{T_{j' \to 2qN}(z_1)}{z_1^{2N-Ny'}}  
	R_N(z_1,z_2)  \frac{T_{0\to j}(z_2)}{z_2^{Ny}}  \frac{dz_1 d z_2}{z_2},
\end{multline}
see \cite[Theorem 4.7]{DK21}, where $\gamma$ is a closed
contour going once around the origin in the positive direction.
The factor $R_N(z_1,z_2)$ in the double integral in \eqref{DKkernel} 
is the reproducing kernel for the MVOP with matrix weight
function $\frac{T_{0 \to 2qN}(z)}{z^{2N}}$. It is a bivariate matrix polynomial in two variables, of size $q \times q$
and degree $\leq N-1$
in both variables satisfying,
for every matrix valued polynomials $P$ and $Q$ of degree $\leq N-1$,
\begin{align*} 
	\frac{1}{2\pi i}
	\oint_{\gamma} P(z_1) \frac{T_{0 \to 2qN}(z_1)}{z_1^{2N}}  R_N(z_1,z_2) dz_1 & = P(z_2), \\
 \frac{1}{2\pi i}
	\oint_{\gamma} R_N(z_1,z_2) \frac{T_{0 \to 2qN}(z_2)}{z_2^{2N}} Q(z_2) dz_2 & = Q(z_1).
	\end{align*}
The reproducing kernel $R_N$ can be expressed directly in
terms of the solution of the RH problem for MVOP, see
\cite{Del10} and \cite[Proposition 4.9]{DK21}, as
\begin{equation} \label{RNinY} R_N(z_1, z_2) = \frac{1}{z_2-z_1} \begin{pmatrix} 0_3 & I_3 \end{pmatrix}  Y^{-1}(z_1) Y(z_2) \begin{pmatrix} I_3 \\ 0_3 \end{pmatrix}. 
	\end{equation}
	
The formula \eqref{DKkernel} is valid without any assumption
 on periodicity in the first components in \eqref{abper}. In the case
of periodicity with period $p$, and assuming $2q$ is a multiple of $p$, then
\[ T_{0 \to 2qN}(z) = W(z)^{\frac{2qN}{p}}, \]
with 
\[ W(z) = T_{0\to p}(z) =  T_1(z) \cdot T_2(z) \cdot \cdots \cdots T_p(z).  \]
For vertices of the form
\[ v_1 = (pNx,qNy), \quad v_2 = (pNx', qNy'),\]
with integer $Nx, Ny, Nx', Ny'$, 
the correlation kernel $K(v_1,v_2)$ is then
equal to the $(1,1)$ entry of the matrix   
\begin{multline} \label{DKkernel2} 
	- \frac{\chi_{x> x'}}{2\pi i} \oint_{\gamma}
	\frac{W^{N(x-x')}(z)}{z^{N(y-y')}} \frac{dz}{z} \\
	+ \frac{1}{(2\pi i)^2} \oint_\gamma \oint_\gamma
	\frac{W^{N(2\frac{q}{p} - x')}(z_1)}{z_1^{2N-Ny'}}  
	R_N(z_1,z_2)  \frac{W^{N x}(z_2)}{z_2^{Ny}}  
	\frac{dz_1 d z_2}{z_2}.
\end{multline}

The situation \eqref{Tj3} and \eqref{Wz} considered
here corresponds to $p=q=3$. 
In a follow-up paper we aim to study the large $N$
behavior of \eqref{DKkernel2} for the special class
of weightings considered in this paper. 
The large $N$ behavior of \eqref{RNinY}  will then follow from
the asymptotic analysis for $Y$ that we performed.
This part of the analysis is independent of the coordinates
$(x,y)$ and $(x',y')$ of the vertices inside the hexagon.
We expect that it can be followed by a steepest descent analysis of the
integrals in \eqref{DKkernel2} that do depend on these
coordinates, which would in particular explain the 
different asymptotic
phases as illustrated in Figure \ref{fig:hexagon}.  

The three phases (solid, rough and smooth) were first described
in \cite{KOS06}. The smooth phase is a special feature of 
doubly periodic models. In recent years there has been a lot of
activity in the analysis of such models, in particular 
for doubly
periodic domino tilings of the Aztec diamond, see 
\cite{Bai23, BCJ18, B21, BB23+, BD19, BB24+, CD23, CJ16, DK21, JM23, Pio24+}.
We expect that the techniques developed in this paper will be helpful to 
study analogous properties of doubly periodic lozenge tilings of
the hexagon.

\begin{figure}[t]
	\begin{center}
\includegraphics[trim= 0cm 0cm 0cm 0cm, clip,scale=0.25]{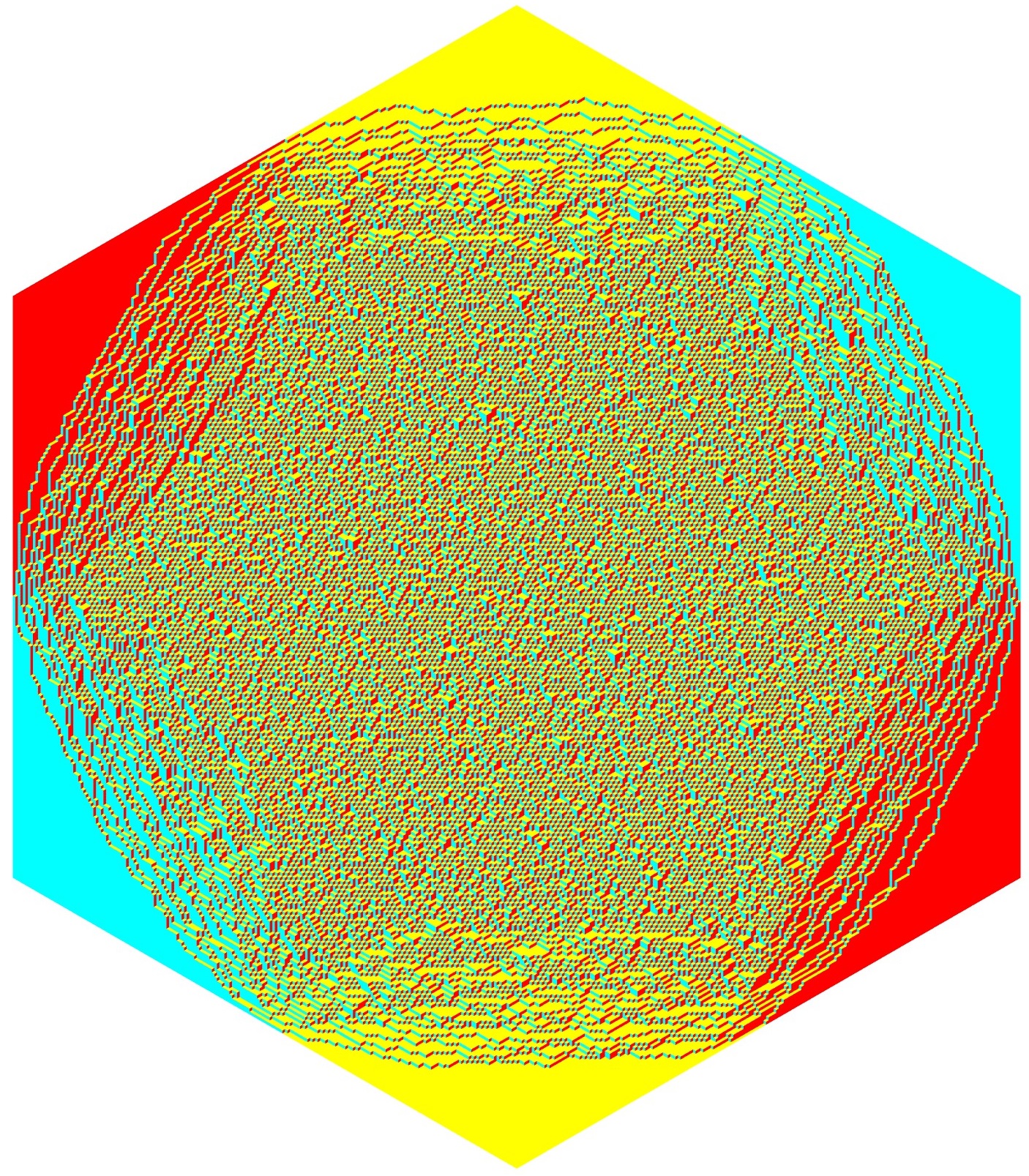}
\caption{Random tiling of a hexagon with
$3 \times 3$ doubly periodic weights of the form considered in this paper. The parameters are $N=200$ and $\alpha_1 = \alpha_2 = 0.3$, see \eqref{Tjconcrete}. The model exhibits three asymptotic phases: solid, rough and smooth. 
The  smooth phase in the middle has six cusps.
\label{fig:hexagon}}
	\end{center}
	
	\end{figure}
	
	\subsection*{Acknowledgement}
	
	I thank Tomas Berggren and Mateusz Piorkowski for stimulating discussions. I am very grateful to Christophe Charlier and 
	Julian Mauersberger for providing me with high quality
	figures of random tilings, such as Figure \ref{fig:hexagon}.
	
	The author is supported by long term structural funding-Methusalem grant of the Flemish Government, and 
	by FWO Flanders project G.0910.20.

\end{document}